\title{\textsc{Arithmetic Riemann-Roch and Hilbert-Samuel formulae for pointed stable curves}}
\author{\textsc{Gerard Freixas i Montplet}}
\date{}
\numberwithin{equation}{section}
\theoremstyle{plain}
\newtheorem{theorem}{Theorem}[section]
\newtheorem{proposition}[theorem]{Proposition}
\newtheorem{lemma}[theorem]{Lemma}
\newtheorem{corollary}[theorem]{Corollary}
\newtheorem*{theoremA}{Theorem A}
\newtheorem*{theoremB}{Theorem B}
\theoremstyle{definition}
\newtheorem{definition}[theorem]{Definition}
\newtheorem{construction}[theorem]{Construction}
\theoremstyle{remark}
\newtheorem{remark}[theorem]{Remark}
\DeclareMathOperator{\Spec}{Spec}
\DeclareMathOperator{\hyp}{hyp}
\DeclareMathOperator{\pre}{pre}
\DeclareMathOperator{\ACH}{\widehat{CH}}
\DeclareMathOperator{\CH}{CH}
\DeclareMathOperator{\ac1}{\widehat{c}_{1}}
\DeclareMathOperator{\c1}{c_{1}}
\DeclareMathOperator{\PSL}{PSL}
\DeclareMathOperator{\an}{an}
\DeclareMathOperator{\Real}{Re}
\DeclareMathOperator{\Imag}{Im}
\DeclareMathOperator{\adeg}{\widehat{deg}}
\DeclareMathOperator{\Res}{Res}
\DeclareMathOperator{\Pet}{Pet}
\DeclareMathOperator{\Div}{Div}
\newcommand{\OO}{\mathcal{O}}
\newcommand{\XX}{\mathcal{X}}
\newcommand{\BS}{\mathcal{S}}
\newcommand{\UU}{\mathcal{U}}
\newcommand{\CC}{\mathbb{C}}
\newcommand{\Int}{\mathbb{Z}}
\newcommand{\SCM}{\mathcal{\overline{M}}}
\newcommand{\PP}{\mathbb{P}}
\newcommand{\HH}{\mathbb{H}}
\newcommand{\RR}{\mathbb{R}}
\newcommand{\QQ}{\mathbb{Q}}
\newcommand{\FF}{\mathbb{F}}
\newcommand{\pd}{\partial}
\newcommand{\cz}{\overline{z}}
\newcommand{\cu}{\overline{u}}
\newcommand{\C}{\mathcal{C}}
\newcommand{\SCC}{\mathcal{\overline{C}}}
\newcommand{\SM}{\mathcal{M}}
\newcommand{\DD}{\mathcal{D}}
\newcommand{\ff}{\mathfrak{f}}
\newcommand{\fg}{\mathfrak{g}}
\newcommand{\fY}{\mathfrak{Y}}
\newcommand{\fZ}{\mathfrak{Z}}
\newcommand{\cpd}{\overline{\pd}}
\begin{document}
\setcounter{tocdepth}{1}
\setcounter{section}{0}
\maketitle

\begin{abstract} 
Let $(\OO,\Sigma, F_{\infty})$ be an arithmetic ring of Krull dimension at most 1, $\BS=\Spec\OO$ and $(\XX\rightarrow\BS;\sigma_{1},\ldots,\sigma_{n})$ a pointed stable curve. Write $\UU=\XX\setminus\cup_{j}\sigma_{j}(\BS)$. For every integer $k\geq 0$, the invertible sheaf $\omega_{\XX/\BS}^{k+1}(k\sigma_{1}+\ldots+k\sigma_{n})$ inherits a singular hermitian structure from the hyperbolic metric on the Riemann surface $\UU_{\infty}$. In this article we define a Quillen type metric $\|\cdot\|_{Q}$ on the determinant line $\lambda_{k+1}=\lambda(\omega_{\XX/\BS}^{k+1}(k\sigma_{1}+\ldots+k\sigma_{n}))$ and we compute the arithmetic degree of $(\lambda_{k+1},\|\cdot\|_{Q})$ by means of an analogue of the Riemann-Roch theorem in Arakelov geometry. As a byproduct, we obtain an arithmetic Hilbert-Samuel formula: the arithmetic degree of $(\lambda_{k+1},\|\cdot\|_{L^{2}})$ admits an asymptotic expansion in $k$, whose leading coefficient is given by the arithmetic self-intersection of $(\omega_{\XX/\BS}(\sigma_{1}+\ldots+\sigma_{n}),\|\cdot\|_{\hyp})$. Here $\|\cdot\|_{L^{2}}$ and $\|\cdot\|_{\hyp}$ denote the $L^{2}$ metric and the dual of the hyperbolic metric, respectively.
\end{abstract}

\tableofcontents

\section{Introduction}
Let $(\OO, \Sigma, F_{\infty})$ be an arithmetic ring of Krull dimension at most $1$ \cite[Def. 3.1.1]{GS}. Set $\BS=\Spec\OO$ and let $\eta$ stand for its generic point. Let $(\pi:\XX\rightarrow\BS;\sigma_{1},\ldots,\sigma_{n})$ be a $n$-pointed stable curve of genus $g$, in the sense of Knudsen and Mumford \cite[Def. 1.1]{Knudsen}. Assume that $\XX_{\eta}$ is smooth. Write $\UU=\XX\setminus\cup_{j}\sigma_{j}(\BS)$. The connected components of the complex analytic space $\UU_{\infty}:=\sqcup_{\sigma\in\Sigma}\UU_{\sigma}(\CC)$ are hyperbolic Riemann surfaces of finite type. The stability hypothesis guarantees the existence of a unique complete hyperbolic metric of constant curvature $-1$ on $\UU_{\infty}$. This metric induces an arakelovian --i.e. invariant under the action of $F_{\infty}$-- hermitian structure $\|\cdot\|_{\hyp}$ on the invertible sheaf $\omega_{\XX/\BS}(\sigma_{1}+\ldots+\sigma_{n})$. Although $\|\cdot\|_{\hyp}$ is not smooth, its singularities are of some logarithmitc type: it is a \textit{pre-log-log} hermitian metric in the sense of Burgos-Kramer-K\"uhn \cite[Sec. 7]{BKK}. The generalizations of the arithmetic intersection theory by Bost \cite{Bost}, K\"uhn \cite{Kuhn} and Burgos-Kramer-K\"uhn \cite{BKK} allow to attach several arithmetic invariants to the metrized invertible sheaf $\omega_{\XX/\BS}(\sigma_{1}+\ldots+\sigma_{n})_{\hyp}$. For instance, if $\XX$ is regular, there is a first arithmetic Chern class $\ac1(\omega_{\XX/\BS}(\sigma_{1}+\ldots+\sigma_{n})_{\hyp})$ in a pre-log-log arithmetic Chow group $\ACH^{1}_{\pre}(\XX)$ \cite[Sec. 7]{BKK}. In general, and if $\OO$ is the ring of integers of a number field, there is an arithmetic self-intersection number $(\omega_{\XX/\BS}(\sigma_{1}+\ldots+\sigma_{n})_{\hyp})^{2}\in\RR$.

In this article we pursue our investigations on an arithmetic Riemann-Roch theorem for pointed stable curves, initiated in \cite{SingARR}. For the sake of simplicity, assume $\XX$ regular. In loc. cit. we exhibited a relation between $\ac1(\omega_{\XX/\BS}(\sigma_{1}+\ldots+\sigma_{n})_{\hyp})^{2}\in\ACH^{2}_{\pre}(\XX)$ and $\lambda(\omega_{\XX/\BS})_{Q}=(\lambda(\omega_{\XX/\BS}),\|\cdot\|_{Q})$. Here $\|\cdot\|_{Q}$ is a suitable Quillen type metric on $\lambda(\omega_{\XX/\BS}):=\det R\pi_{\ast}\omega_{\XX/\BS}$. Its definition involves the special values at $1$ of the derivatives of the Selberg zeta functions $Z(\UU_{\sigma}(\CC),s)$, $\sigma\in\Sigma$. A special feature of our formula is the appearance of an arithmetic counterpart $\psi_{W}$ of the so called \textit{psi} class on $\SCM_{g,n}$. This class detects the continuous spectrum in the resolution of the hyperbolic laplacian on $\UU_{\infty}$ --provided that $n>0$--. Its underlying hermitian structure is Wolpert's renormalization of the hyperbolic metric at the cusps (\cite[Def.1]{Wolpert:cusps} and Definition \ref{definition:Wolpert} below). In the present work we introduce a Quillen type metric on the determinant $\lambda(\omega_{\XX/\BS}^{k+1}(k\sigma_{1}+\ldots+k\sigma_{n}))$, for every integer $k\geq 0$ (Definition \ref{definition:Quillen}). If $k\geq 1$, this metric is built up with the special values $Z(\UU_{\sigma}(\CC), k+1)$, $\sigma\in\Sigma$, and the natural $L^{2}$ pairings on $H^{0}(\XX, \omega_{\XX/\BS}^{k+1}(k\sigma_{1}+\ldots+k\sigma_{n}))\otimes_{\sigma}\CC$, $\sigma\in\Sigma$. We relate $\lambda(\omega_{\XX/\BS}^{k+1}(k\sigma_{1}+\ldots+k\sigma_{n}))_{Q}$ and $\ac1(\omega_{\XX/\BS}(\sigma_{1}+\ldots+\sigma_{n}))^{2}$ through an arithmetic Riemann-Roch formula.
\begin{theoremA}
Let $(\OO,\Sigma,F_{\infty})$ be an arithmetic ring of Krull dimension at most $1$. Let $(\pi:\XX\rightarrow\BS;\sigma_{1},\ldots,\sigma_{n})$ be a pointed stable curve of genus $g$. For every closed point $\wp\in\BS$ denote by $n_{\wp}$ the number of singular points in the geometric fiber $\XX_{\wp}$ and put $\Delta_{\XX/\BS}=\left[\sum_{\wp}n_{\wp}\wp\right]\in\CH^{1}(\BS)$.\\
i. If $\XX$ is regular, then the identity
\begin{displaymath}
	\begin{split}
	12\ac1(\lambda(\omega_{\XX/\BS}^{k+1}(k\sigma_{1}+\ldots+&k\sigma_{n}))_{Q})-\Delta_{\XX/\BS}+\ac1(\psi_{W})=\\
	(&6k^{2}+6k+1)\pi_{\ast}\left(\ac1(\omega_{\XX/\BS}(\sigma_1+\ldots+\sigma_n)_{\hyp})^{2}\right)\\
	&+\ac1\left(\OO(C(g,n))\right)
	\end{split}
\end{displaymath}
holds in the arithmetic Chow group $\ACH^{1}(\BS)$.\\
ii. If $\OO$ is the ring of integers of a number field and $\XX$ is generically smooth, then there is an equality of real numbers
\begin{displaymath}
	\begin{split}
	12\adeg\ac1(\lambda(\omega_{\XX/\BS}^{k+1}(k\sigma_{1}+\ldots+&k\sigma_{n}))_{Q})-\adeg\Delta_{\XX/\BS}
	+\adeg\ac1(\psi_{W})=\\
	(&6k^{2}+6k+1)(\omega_{\XX/\BS}(\sigma_{1}+\ldots+\sigma_{n})_{\hyp})^{2}\\
	&+\adeg\ac1\left(\OO(C(g,n))\right).
	\end{split}
\end{displaymath}

\end{theoremA}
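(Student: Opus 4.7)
The plan is to deduce Theorem A from the case $k=0$ already established in \cite{SingARR}, by combining Deligne's functorial Riemann--Roch for families of curves on the algebraic side with a Bismut--Gillet--Soul\'e anomaly computation on the archimedean side. Part (ii) will follow from (i) by taking arithmetic degrees and applying the projection formula to identify $\adeg\pi_\ast(\ac1(\omega_{\XX/\BS}(\sigma_1+\ldots+\sigma_n)_{\hyp})^{2})$ with the arithmetic self-intersection number on the right-hand side.

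Algebraically, I would apply Deligne's isomorphism
\begin{displaymath}
	\lambda(L)^{\otimes 12}\;\cong\;\lambda(\omega_{\XX/\BS})^{\otimes 12}\otimes\langle L,\, L\otimes \omega_{\XX/\BS}^{-1}\rangle^{\otimes 6}
\end{displaymath}
with $L=\omega_{\XX/\BS}^{k+1}(k\sigma_1+\ldots+k\sigma_n)$. Setting $M=\omega_{\XX/\BS}(\sigma_1+\ldots+\sigma_n)$ and $D=\OO_{\XX}(\sigma_1+\ldots+\sigma_n)$, so that $L=\omega_{\XX/\BS}\otimes M^{\otimes k}$ and hence $\ac1(L)-\ac1(\omega_{\XX/\BS})=k\,\ac1(M)$, the adjunction identity $\sigma_j^{\ast}\OO_{\XX}(\sigma_j)\cong(\sigma_j^{\ast}\omega_{\XX/\BS})^{\vee}$ together with the disjointness of the sections gives $\pi_\ast(\ac1(D)\ac1(M))=0$, whence $\pi_\ast(\ac1(\omega_{\XX/\BS})\ac1(M))=\pi_\ast\ac1(M)^{2}$ and
\begin{displaymath}
	\pi_\ast\bigl(\ac1(L)(\ac1(L)-\ac1(\omega_{\XX/\BS}))\bigr)\;=\;k(k+1)\,\pi_\ast\ac1(M)^{2}.
\end{displaymath}
Adding $6$ times this identity to the $k=0$ instance of Theorem A (proved in \cite{SingARR}) produces the combinatorial coefficient $1+6k(k+1)=6k^{2}+6k+1$ on the right-hand side of the formula.

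The archimedean part consists in showing that the Quillen metric on $\lambda_{k+1}$ of Definition \ref{definition:Quillen} agrees, up to the Bott--Chern class predicted by the algebraic identity, with the metric induced by Deligne's isomorphism from the Quillen metric on $\lambda_{1}$ together with $\|\cdot\|_{\hyp}$ on the Deligne pairings. This relies on the Bismut--Gillet--Soul\'e anomaly formula for Quillen metrics under a change of hermitian line bundle, extended to the pre-log-log regime of Burgos--Kramer--K\"uhn, together with an asymptotic comparison of the special values $Z(\UU_{\sigma}(\CC),k+1)$ and $Z'(\UU_{\sigma}(\CC),1)$ via the Selberg trace formula and the functional equation of the Selberg zeta function. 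The Wolpert class $\psi_{W}$ appears with the same coefficient as at $k=0$ because its defining renormalization depends only on the cuspidal asymptotics of the hyperbolic metric, independent of the weight $k+1$.

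The principal obstacle will be the cuspidal analysis. While the $L^{2}$ pairing on $H^{0}(\XX,\omega_{\XX/\BS}^{k+1}(k\sigma_{1}+\ldots+k\sigma_{n}))\otimes_{\sigma}\CC$ converges for $k\geq 1$, the heat-kernel expansion of the weighted Laplacian near a cusp still produces boundary terms that must be renormalised through the scattering matrix at $s=k+1$. These contributions must combine with the anomaly Bott--Chern class to yield precisely the $k$-polynomial $6k^{2}+6k+1$ multiplying $\pi_\ast(\ac1(M_{\hyp})^{2})$, plus a universal constant $C(g,n)$ \emph{independent of $k$}. Universality in $(g,n)$ can be confirmed by a variational argument invoking the rigidity of the archimedean correction viewed as a function on $\SCM_{g,n}$, or in low-genus examples by direct computation.
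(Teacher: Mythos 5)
Your algebraic bookkeeping is fine: combining Deligne's isomorphism with the $k=0$ case does reproduce the coefficient $6k^{2}+6k+1$, and it runs parallel to the paper's inductive use of $\lambda_{k+1;g,n}\simeq\lambda_{k;g,n}\otimes\kappa_{g,n}^{\otimes k}$ (Lemma \ref{lemma:3.1} and Theorem \ref{theorem:3.1}). The gap is entirely in the archimedean part, which is where the content of the theorem lies. The Bismut--Gillet--Soul\'e anomaly formula you invoke is a statement about smooth metrics on compact fibres; here the hyperbolic metric on $\omega_{\XX/\BS}(\sigma_{1}+\ldots+\sigma_{n})$ is singular along the sections, and the Quillen metric of Definition \ref{definition:Quillen} is defined through Selberg zeta values of the non-compact surfaces $\UU_{\sigma}(\CC)$, whose Laplacians have continuous spectrum. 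No extension of the anomaly formula ``to the pre-log-log regime'' is available that would compare $\|\cdot\|_{Q}$ on $\lambda_{k+1}$ with the metric induced from $\lambda_{1;Q}$ and the Deligne pairings; establishing such a comparison, with its explicit constants $E_{k+1}(g,n)$ and $C(g,n)$, is precisely what the paper has to prove (Theorem \ref{theorem:5.1}), and it is exactly why the hypotheses are said to be beyond the reach of \cite{ARR}.

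Worse, your fallback for pinning down the constant --- a variational argument invoking the rigidity of the archimedean correction as a function on $\SCM_{g,n}$ --- is the very argument the paper criticises in \cite{Weng}: $\SM_{g,n;\CC}$ carries non-constant harmonic functions for $g=0$, $n\geq 4$ and $g=1$, $n\geq 1$, so curvature identities alone do not force the discrepancy between the two metrics to be constant. The paper's actual route is different in kind: it proves the metrized Mumford isomorphism on the moduli stack by reducing, via the reiterated clutching morphism $\gamma:\SCM_{g,n}\times\SCM_{1,1}^{\times n}\rightarrow\SCM_{g+n,0}$, to the compact case $(g+n,0)$ --- where Deligne, Gillet--Soul\'e and Sarnak's evaluation of the analytic torsion apply (Proposition \ref{proposition:5.1}) --- and then computes the exact degeneration of the Quillen metric along the boundary, namely the asymptotics of $Z(\fZ_{t},k+1)$ (Corollary \ref{corollary:4.1}) and of the family $L^{2}$ metric \`a la Obitsu--Wolpert (Theorem \ref{theorem:4.3}); Theorem A then follows by pulling back the resulting isometry along the classifying map of $\XX\rightarrow\BS$. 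To salvage your plan you would have to replace the anomaly/rigidity step by an equally explicit mechanism for computing the constants; as written, the proposal assumes the hard part.
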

Notice that despite of being rather restrictive, our hypothesis are beyond the reach of the arithmetic Riemann-Roch theorem of Gillet-Soul\'e \cite{ARR}.

The proof of Theorem A relies on a more general statement: a metrized version of a Mumford type isomorphism on $\SM_{g,n}$ (Theorem \ref{theorem:5.1}). The techniques combine the geometry of the boundary stack $\pd\SM_{g+n,0}$ --through Knudsen's clutching morphisms-- and the behavior of the Selberg zeta function (Theorem \ref{theorem:4.2}) and the $L^{2}$ metric (Theorem \ref{theorem:4.3}) in suitable degenerating families of Riemann surfaces. Wolpert's pinching expansion for the family hyperbolic metric \cite[Exp. 4.2]{Wolpert:hyperbolic} proves to be a key result in the necessary explicit computations. As a remarkable outcome, Theorem \ref{theorem:5.1} implies the Takhtajan-Zograf local index formula for pointed stable curves (\cite{ZT1}--\cite{ZT2} and Corollary \ref{corollary:5.2} below).

It is worth pointing out that, in Theorem A, the stability condition at the finite places can be relaxed.  Provided that $g\geq 1$, one may assume that $\XX$ is generically smooth, semi-stable and $\sigma_{1},\ldots,\sigma_{n}:\BS\rightarrow\XX$ are disjoint sections in the smooth locus of $\pi$. This covers important examples such as the Deligne-Rapoport models of modular curves $X_{0}(p)\rightarrow\Spec\Int$, $p$ prime for which $g(X_{0}(p))\geq 1$, together with the sections $0$ and $\infty$ \cite{DR}. The proof combines an anomaly formula --established by means of Theorem \ref{theorem:5.1}-- and the arithmetic Riemann-Roch theorem of Gillet-Soul\'e in its more general form \cite{ARR}. The details will be worked out elsewhere.

In arithmetic applications a weaker form of Theorem A may be sometimes enough: an arithmetic Hilbert-Samuel type formula. As an advantage, it provides a geometric interpretation of the arithmetic self-intersection number $(\omega_{\XX/\BS}(\sigma_{1}+\ldots+\sigma_{n})_{\hyp})^{2}$.
\begin{theoremB}
Let $K$ be a number field, $\OO_{K}$ its ring of integers and $\BS=\Spec\OO_{K}$. Let $(\pi:\XX\rightarrow\BS;\sigma_{1},\ldots,\sigma_{n})$ be a pointed stable curve with generic fiber $\XX_{K}$ smooth. Then there is an asymptotic expansion
\begin{displaymath}
	\adeg\ac1(\lambda(\omega_{\XX/\BS}^{k+1}(k\sigma_{1}+\ldots+k\sigma_{n}))_{L^{2}})=
	\frac{k^{2}}{2}(\omega_{\XX/\BS}(\sigma_{1}+\ldots+\sigma_{n})_{\hyp})^{2}+\OO(k\log k).
\end{displaymath}
In particular, the following limit formula holds:
\begin{displaymath}
	\lim_{k\to +\infty}\frac{2}{k^{2}}\adeg\ac1(\lambda(\omega_{\XX/\BS}^{k+1}(k\sigma_{1}+\ldots+k\sigma_{n}))_{L^{2}})=
	(\omega_{\XX/\BS}(\sigma_{1}+\ldots+\sigma_{n})_{\hyp})^{2}.
\end{displaymath}
\end{theoremB}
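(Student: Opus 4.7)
The plan is to deduce Theorem~B from Theorem~A(ii) by passing from the Quillen metric on the determinant $\lambda_{k+1}:=\lambda(\omega_{\XX/\BS}^{k+1}(k\sigma_{1}+\ldots+k\sigma_{n}))$ to the $L^{2}$ metric and controlling the difference as $k\to\infty$.

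The first step is to apply Theorem~A(ii), whose hypotheses are met (number field base, generically smooth $\XX$). It gives
\begin{equation*}
12\,\adeg\ac1(\lambda_{k+1,Q})=(6k^{2}+6k+1)\,(\omega_{\XX/\BS}(\sigma_{1}+\ldots+\sigma_{n})_{\hyp})^{2}+c,
\end{equation*}
where $c:=\adeg\Delta_{\XX/\BS}-\adeg\ac1(\psi_{W})+\adeg\ac1(\OO(C(g,n)))$ is independent of $k$. In particular,
\begin{equation*}
\adeg\ac1(\lambda_{k+1,Q})=\tfrac{k^{2}}{2}(\omega_{\XX/\BS}(\sigma_{1}+\ldots+\sigma_{n})_{\hyp})^{2}+O(k),
\end{equation*}
which already exhibits the desired leading behaviour for the Quillen-normalized determinant line.

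Next, I would translate this estimate to the $L^{2}$ metric by unpacking Definition~\ref{definition:Quillen}. At each archimedean place $\sigma\in\Sigma$ and for $k\geq 1$, the Quillen metric is obtained from the $L^{2}$ metric by multiplication by the special value $Z(\UU_{\sigma}(\CC),k+1)$ of the Selberg zeta function, together with the Wolpert-type cusp normalizations encoded by the class $\psi_{W}$. This yields an identity of the form
\begin{equation*}
\adeg\ac1(\lambda_{k+1,L^{2}})-\adeg\ac1(\lambda_{k+1,Q})=\tfrac{1}{2}\sum_{\sigma\in\Sigma}\log Z(\UU_{\sigma}(\CC),k+1)+R_{k},
\end{equation*}
where $R_{k}$ collects the cuspidal $\Gamma$-factor contributions tied to the continuous spectrum of the hyperbolic Laplacian on $\UU_{\sigma}(\CC)$.

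Finally, I would estimate both contributions. From the absolutely convergent Euler product $Z(\UU_{\sigma},s)=\prod_{\gamma}\prod_{m\geq 0}(1-e^{-(s+m)\ell(\gamma)})$ for $\Real s>1$, a systole bound yields $\log Z(\UU_{\sigma},k+1)=O(e^{-k\ell_{0}})$, an exponentially small and hence $O(1)$ term. The remainder $R_{k}$ grows at most like $k\log k$ by Stirling's formula applied to the cuspidal $\Gamma$-factors, one for each of the $n$ cusps at each archimedean place. Inserting these into the first step gives
\begin{equation*}
\adeg\ac1(\lambda_{k+1,L^{2}})=\tfrac{k^{2}}{2}(\omega_{\XX/\BS}(\sigma_{1}+\ldots+\sigma_{n})_{\hyp})^{2}+O(k\log k),
\end{equation*}
which is Theorem~B; the limit formula follows by dividing by $k^{2}/2$. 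The main obstacle is the last step: tracking the precise form of the Quillen/$L^{2}$ comparison and extracting the correct $\Gamma$-factor contributions in $R_{k}$ to justify the $O(k\log k)$ bound. Everything else reduces to routine asymptotic bookkeeping.
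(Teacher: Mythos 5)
Your strategy coincides with the paper's: apply Theorem A(ii), divide by $12$, absorb the $k$-independent quantities $\adeg\Delta_{\XX/\BS}$, $\adeg\ac1(\psi_{W})$, $\adeg\ac1(\OO(C(g,n)))$ and the linear term $\tfrac{6k+1}{12}(\omega_{\XX/\BS}(\sigma_{1}+\ldots+\sigma_{n})_{\hyp})^{2}$ into the error, and then compare the Quillen and $L^{2}$ metrics. Your first and last displays are correct, and the bound $\log Z(\UU_{\sigma}(\CC),k+1)=\OO(e^{-k\ell_{0}})$ is exactly the $o(1)$ the paper uses. The gap sits in the step you yourself flag as ``the main obstacle,'' and it is not mere bookkeeping. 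By Definition \ref{definition:Quillen} the two metrics differ at each archimedean place by the explicit factor $(E_{k+1}(g,n)Z(\UU_{\sigma}(\CC),k+1))^{-1/2}$, so your $R_{k}$ is (up to sign) $\tfrac{[K:\QQ]}{2}\log E_{k+1}(g,n)$. Describing it as ``cuspidal $\Gamma$-factors, one per cusp'' misidentifies the delicate term: the genuinely cuspidal factor $((2k)!\pi^{2k+1})^{-n/2}$ is harmless, its logarithm being $\OO(k\log k)$ directly by Stirling; the problematic piece is the exponent proportional to $2g-2+n$ containing $\sum_{j=1}^{2k}(j-k-\tfrac12)\log j-(k+\tfrac12)^{2}$, which originates in the Barnes double Gamma constant of the analytic torsion of the compact part, not in the cusps.

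Crucially, the estimate $\OO(k\log k)$ for that piece cannot be obtained by bounding its terms separately. One has $\sum_{j=1}^{2k}(j-k)\log j=k^{2}+\OO(k\log k)$, so $\sum_{j=1}^{2k}(j-k-\tfrac12)\log j$ and $(k+\tfrac12)^{2}$ are each of size $k^{2}$, and only their \emph{difference} is $\OO(k\log k)$. Without exhibiting this cancellation you only get $R_{k}=\OO(k^{2})$, which is of the same order as the main term $\tfrac{k^{2}}{2}(\omega_{\XX/\BS}(\sigma_{1}+\ldots+\sigma_{n})_{\hyp})^{2}$ and destroys even the limit formula. The paper closes exactly this point by setting $\alpha_{k}:=\sum_{j=1}^{2k}(j-k)\log j-k^{2}=\log\Gamma_{2}(2k+1)+k\log(2k)!-k^{2}$ and invoking the asymptotic expansion of Barnes' double Gamma function together with Stirling's formula to conclude $\alpha_{k}=\OO(k\log k)$. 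You need to supply this (or an equivalent Euler--Maclaurin evaluation of $\sum_{j=1}^{2k}(j-k)\log j$) to complete the argument; everything else in your outline matches the paper's proof.
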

Notice that the statement of Theorem B does not involve the Quillen metrics, but only the $L^{2}$ metrics on the determinant lines. By means of the uniformization theorem, these norms can be constructed from the Petersson metrics on suitable spaces of automorphic cusp forms. Taking for granted the extension of Theorem A to more general hypothesis, as alluded above, Theorem B becomes specially meaningful for models of modular curves $X_{0}(p)\rightarrow\Spec\Int$, $p$ prime with $g(X_{0}(p))\geq 1$, together with the sections $0$, $\infty$.

Possible applications of Theorem A concern the special values of the Selberg and Ruelle z\^eta functions for modular curves. A particular instance was already treated in \cite[Th. B]{SingARR}. Analogous but new results will be compiled in a future work. Hereby we use Theorem B to understand the first successive minimum of the hermitian bundles $(H^{0}(\XX,\omega_{\XX/\BS}^{k+1}(k\sigma_{1}+\ldots+k\sigma_{n})),\|\cdot\|_{\Pet})$, $k\geq 0$, for a pointed stable curve of genus $0$ and $\|\cdot\|_{\Pet}$ the Petersson norm (Section \ref{section:applications}).

Other approaches to Theorem A already exist in the literature. We cite for instance \cite[Part II]{Weng}. The method of loc. cit. leads to an analogous statement up to an unknown universal constant. The advantage of our formula is that explicit computations are allowed. Furthermore, Theorem B can \textit{not} be recovered from \cite{Weng}. Finally, in contrast with \cite[Fund. rel. IV', p. 280]{Weng}, our result is available for pointed stable curves of any genus.\footnote{The proof of loc. cit. presents a gap in genus $g\leq 2$ (last two lines in page 279). The case $g=2$ requires a justification whereas there are counterexamples to the principle of the proof in genus $0$ and $1$: there exist non-constant harmonic functions on $\SM_{g,n;\CC}$, for $g=0$, $n\geq 4$ and $g=1$, $n\geq 1$.  Also the case $g=0$ and $n=3$ is beyond the reach in \cite{Weng}.} We also refer to Hahn's forthcoming thesis \cite{Hahn}. Hahn obtains different results related to Theorem A. His approach is much in the spirit of Jorgenson--Lundelius \cite{JL1}--\cite{JL3}. In contrast with our geometric considerations, Hahn works with a degenerating family of metrics on a fixed compact Riemann surface and studies the behavior of the corresponding family of heat kernels. Consequences are derived for the family of heat trace regularizations and spectral zeta functions.

We briefly review the structure of this paper. In Section \ref{section:conventions} we compile most of the normalizations and notations that hold throughout the article. In Section \ref{section:clutching} we study the interplay between the adjunction isomorphism and Knudsen's clutching operation. The results obtained are applied in Section \ref{section:tautological}, where we recall the definition of the tautological invertible sheaves on $\SCM_{g,n}$ and prove relations between them. Section \ref{section:quillen} is devoted to the degeneracy of the family Quillen metric. Preliminaries on the continuity properties of the Selberg zeta function in family are provided. These are combined with a careful study of the degeneracy of the family $L^{2}$ metric, whose only merit is due to Obitsu and Wolpert. Theorem A, Theorem B and the Takhtajan-Zograf local index formula are proven in Section \ref{section:metrized}. Our results are exemplified through the case of pointed stable curves of genus 0, in Section \ref{section:applications}. Finally, in appendix we give a proof of the ampleness of the line bundle $\kappa_{0,n}$ on $\SCM_{0,n}\rightarrow\Spec\Int$, for the sake of completeness. 

\textbf{Acknowledgements}
It is a pleasure to thank J.-B. Bost and J. I. Burgos, who proposed me to study the topic of this article during my PhD. thesis, as well as a possible approach: it revealed to be fruitful. S. Wolpert kindly guided me at some points and shared his knowledge in Teichm\"uller theory. He also communicated me the reference \cite{Obitsu-Wolpert}, crucial for this work. I am deeply indebted to him. I acknowledge J.-M. Bismut for stimulating discussions on questions related to this paper. Finally I am grateful to T. Hahn, J. Kramer, U. K\"uhn, for the mathematical exchanges we had throughout the last years. This work was conceived while I was visiting the Universit\`a degli Studi di Padova, with the support of the European Commission's \textquotedblleft Marie Curie early stage researcher\textquotedblright\, programme (6th Framework Programme RTN \textit{Arithmetic Algebraic Geometry}). I thank these institutions for their hospitality and financial support.

\section{Conventions and notations}\label{section:conventions}
We fix some conventions and notations that will hold throughout this paper.

\paragraph{Constants.} Let $g,n\geq 0$ be integers with $k\geq 0$, $2g-2+n>0$. We define the real constants
\begin{displaymath}
	\begin{split}
		&C(g,n)=\exp\left((2g-2+n)\left(\frac{\zeta^{\prime}(-1)}{\zeta(-1)}+\frac{1}{2}\right)\right),\\
		&E_{1}(g,n)=2^{(g+2-n)/3}\pi^{-n/2}\\
		&\hspace{1.9cm}\cdot\exp\left((2g-2+n)\left(2\zeta^{\prime}(-1)-\frac{1}{4}+\frac{1}{2}\log(2\pi)\right)\right)
	\end{split}
\end{displaymath}
and, for every integer $k\geq 1$,
\begin{displaymath}
	\begin{split}
		&E_{k+1}(g,n)=2^{((3k+1)(g-1)-n)/3}((2k)!\pi^{2k+1})^{-n/2}\\
		&\hspace{2.3cm}\cdot\exp\Big((2g-2+n)\big(2\zeta^{\prime}(-1)-(k+\frac{1}{2})^{2}+(k+\frac{1}{2})\log(2\pi)\\
		&\hspace{3.5cm}+\sum_{j=1}^{2k}(j-k-\frac{1}{2})\log j\big)\Big).
	\end{split}
\end{displaymath}
Here $\zeta$ denotes the Riemann zeta function. Notice the relations
\begin{align}
	&C(g+n,0)=C(g,n)C(1,1)^{n},\label{equation:C}\\
	&E_{1}(g+n,0)=\pi^{n}E_{1}(g,n)E_{1}(1,1)^{n},\label{equation:E_{1}}\\
	&E_{k+1}(g+n,0)=(2^{k+1}(2k)!\pi^{2k+1})^{n}E_{k+1}(g,n)E_{k+1}(1,1)^{n},\quad k\geq 1.
\end{align}

\paragraph{Hyperbolic and Wolpert metrics.} Let $X$ be a compact and connected Riemann surface of genus $g$ and $p_{1}$,$\ldots$, $p_{n}$ distinct points in $X$, $2g-2+n>0$. The open subset $U=X\setminus\lbrace p_{1},\ldots,p_{n}\rbrace$ admits a complete hyperbolic riemannian metric, of constant curvature -1. Denote it by $ds_{\hyp,U}^{2}$. Via a fuchsian uniformization $U\simeq\Gamma\backslash\HH$, $\Gamma\subset\PSL_{2}(\RR)$ torsion free, the metric $ds_{\hyp,U}^{2}$ is obtained by descent from the $\Gamma$ invariant Riemann tensor on $\HH$
\begin{displaymath}
	ds_{\hyp,\HH}^{2}=\frac{dx^{2}+dy^{2}}{y^{2}},\,\,\,z=x+iy\in\HH.
\end{displaymath}
Associated to $ds_{\hyp,U}^{2}$ there is a hermitian metric on the complex line $T_{U}$, that we write $h_{U}$. It is obtained by descent from the metric $h_{\HH}$ on $T_{\HH}$ defined by the rule
\begin{displaymath}
	h_{\HH}\left(\frac{\pd}{\pd z},\frac{\pd}{\pd z}\right)=\frac{1}{2y^{2}}.
\end{displaymath}
The hermitian metric $h_{U}$ extends to a pre-log-log hermitian metric $\|\cdot\|_{\hyp}$ on $\omega_{X}(p_{1}+\ldots+p_{n})$ \cite[Sec. 7.3.2]{GFM}. The first Chern form of $\omega_{X}(p_{1}+\ldots+p_{n})_{\hyp}$, which is defined on $U$, coincides with the normalized K\"ahler form $\omega$ of $h_{U}$ (curvature $-1$ condition). The form $\omega$ is locally given by
\begin{displaymath}
	\omega=\frac{i}{2\pi}h_{U}\left(\frac{\pd}{\pd z},\frac{\pd}{\pd z}\right) dz\wedge d\cz.
\end{displaymath}
The volume of $X$ with respect to $\omega$ is $2g-2+n$.

For every puncture $p_{j}$ there is a conformal coordinate $z$ with $z(p_{j})=0$, by means of which a small punctured disc $D^{*}(0,\varepsilon)\subset\CC$ with the Poincar\'e metric
\begin{displaymath}
	ds_{P}^{2}=\left(\frac{|dz|}{|z|\log|z|}\right)^{2}
\end{displaymath}
isometrically embeds into $(U,ds_{\hyp,U}^{2})$. Such a coordinate is unique up to rotation and is called a \textit{rs} coordinate at the cusp $p_{j}$.
\begin{definition}[Wolpert metric \cite{Wolpert:cusps}, Def. 1]\label{definition:Wolpert}
Let $z$ be a \textit{rs} coordinate at the cusp $p_{j}$. The \textit{Wolpert metric} on the complex line $\omega_{X,p_{j}}$ is defined by
\begin{displaymath}
	\|dz\|_{W,p_{j}}=1.
\end{displaymath}
The tensor product $\otimes_{j}\omega_{X,p_{j}}$ is equipped with the tensor product of Wolpert metrics, and we write $\|\cdot\|_{W}$ for the resulting metric.
\end{definition}
\paragraph{$L^{2}$ metrics.} Let $(X;p_{1},\ldots,p_{n})$ be as above. We introduce the $L^{2}$ metrics on the determinant lines $\lambda(\omega_{X}^{k+1}(kp_{1}+\ldots+kp_{n}))$. The cases $k=0$ and $k\geq 1$ need to be distinguished.

The complex vector space $\C^{\infty}(X,\omega_{X})(\supset H^{0}(X,\omega_{X}))$ is equipped with the non-degenerate hermitian form
\begin{displaymath}
	\langle\alpha,\beta\rangle_{0}=\frac{i}{2\pi}\int_{X}\alpha\wedge\overline{\beta}.
\end{displaymath}
The space $H^{1}(X,\omega_{X})^{\vee}$ is canonically isomorphic to $H^{0}(X,\OO_{X})$ $=\CC$ via the analytic Serre duality. Since $\omega$ is integrable, the $L^{2}$ metric on $\C^{\infty}(X,\OO_{X})(\supset H^{0}(X,\OO_{X}))$ with respect to $h_{U}$ is well defined. If $\mathbf{1}$ is the function with constant value 1, then
\begin{displaymath}
	\langle\mathbf{1},\mathbf{1}\rangle_{1}=\int_{X}\omega=2g-2+n.
\end{displaymath}
The complex line $\lambda(\omega_{X})=\det H^{0}(X,\omega_{X})\otimes\det H^{1}(X,\omega_{X})^{-1}$ is endowed with the determinant metric build up from $\langle\cdot,\cdot\rangle_{0}$ and $\langle\cdot,\cdot\rangle_{1}$. We refer to it by $\|\cdot\|_{L^{2}}$.

Let $k\geq 1$ be an integer. Notice that $H^{1}(X,\omega_{X}^{k+1}(kp_{1}+\ldots+kp_{n}))=0$. Hence we have 
\begin{displaymath}
	\begin{split}
	\lambda(\omega_{X}^{k+1}(kp_{1}+\ldots+kp_{n})):=&\det R\Gamma(X,\omega_{X}^{k+1}(kp_{1}+\ldots+kp_{n}))\\
	=&\det H^{0}(X,\omega_{X}^{k+1}(kp_{1}+\ldots+kp_{n})).
	\end{split}
\end{displaymath}
The line bundle $\omega_{X}^{k+1}(kp_{1}+\ldots+kp_{n})$ inherits a --singular-- hermitian structure from $\omega_{X}(p_{1}+\ldots+p_{n})_{\hyp}^{k+1}$. Denote it by $\langle\cdot,\cdot\rangle_{\hyp}$. For every $\alpha,\beta\in H^{0}(X,\omega_{X}^{k+1}(kp_{1}+\ldots+kp_{n}))$, the integral
\begin{displaymath}
	\langle\alpha,\beta\rangle_{L^{2}}:=\int_{X}\langle\alpha,\beta\rangle_{\hyp}\omega
\end{displaymath}
is seen to be absolutely convergent.\footnote{However this fails to be true for the bigger space $H^{0}(X,\omega_{X}(\sigma_{1}+\ldots+\sigma_{n})^{k+1})$.} This rule defines a hermitian metric on $H^{0}(X,\omega_{X}^{k+1}(kp_{1}+\ldots+kp_{n}))$. We write $\|\cdot\|_{L^{2}}$ for the determinant metric on $\lambda(\omega_{X}^{k+1}(kp_{1}+\ldots+kp_{n}))$.

\paragraph{Selberg zeta function and Quillen metric.} We next recall the definition of the Selberg zeta function of $U$ (see \cite{Hejhal}). For every real $l>0$ the function
\begin{displaymath}
	Z_{l}(s)=\prod_{k=0}^{\infty}(1-e^{-(s+k)l})^{2}
\end{displaymath}
is holomorphic in $\Real s>0$. In a first step, the Selberg zeta function of $U$ is defined by the absolutely convergent product
\begin{displaymath}
	Z(U,s)=\prod_{\gamma}Z_{l(\gamma)}(s),\,\,\,\Real s>1,
\end{displaymath}
running over the simple closed non-oriented geodesics of the hyperbolic surface $(U,ds_{\hyp,U}^{2})$. Then one shows that $Z(U,s)$ extends to a meromorphic function on $\CC$, with a simple zero at $s=1$.
\begin{definition}[Quillen metric]\label{definition:Quillen}
Let $k\geq 0$ be an integer. We define the \textit{Quillen metric} on $\lambda(\omega_{X}^{k+1}(kp_{1}+\ldots+kp_{n}))$, attached to the hyperbolic metric on $U$, to be
\begin{displaymath}
	\|\cdot\|_{Q}=\begin{cases}
		(E_{1}(g,n)Z^{\prime}(U,1))^{-1/2}\|\cdot\|_{L^{2}},	&\text{if}\quad k=0,\\
		(E_{k+1}(g,n)Z(U,k+1))^{-1/2}\|\cdot\|_{L^{2}},	&\text{if}\quad k\geq 1.
		\end{cases}
\end{displaymath}
\end{definition}
Notice the apparent discrepancy between the definition of the Quillen metric for $k=0$ and for $k\geq 1$ (recall the definition of $E_{1}(g.n)$ and $E_{k+1}(g,n)$ for $k\geq 1$). The difference stems from the fact $\dim H^{0}(X,\omega_{X})=\chi(\omega)+1$, while $\dim H^{0}(X,\omega_{X}^{k+1}(kp_{1}+\ldots+kp_{n}))=\chi(\omega_{X}^{k+1}(kp_{1}+\ldots+kp_{n}))$ for $k\geq 1$.

\paragraph{Arithmetic setting.} Let $(\OO, \Sigma, F_{\infty})$ be an arithmetic ring of Krull dimension at most $1$. Put $\BS=\Spec\OO$ and denote its generic point by $\eta$. Let $(\pi:\XX\rightarrow\BS;\sigma_{1},\ldots,\sigma_{n})$ be a $n$-pointed stable curve of genus $g$, in the sense of Knudsen and Mumford \cite[Def. 1.1]{Knudsen}. Assume $\XX_{\eta}$ smooth. We write $\UU=\XX\setminus\cup_{j}\sigma_{j}(\BS)$. By definition $\XX_{\eta}$ is geometrically connected. For every complex embedding $\tau\in\Sigma$, the preceding constructions apply to $\UU_{\tau}(\CC)\subset\XX_{\tau}(\CC)$. Varying $\tau$, we obtain arakelovian hermitian line bundles $\omega_{\XX/\BS}(\sigma_{1}+\ldots+\sigma_{n})_{\hyp}$, $\psi_{W}=\otimes_{j}\sigma_{j}^{\ast}(\omega_{\XX/\BS})_{W}$, $\lambda(\omega_{\XX/\BS}^{k+1}(k\sigma_{1}+\ldots+k\sigma_{n}))_{Q}$. 

Furthermore we denote by $\OO(C(g,n))$ the trivial line bundle equipped with the norm $C(g,n)|\cdot|$. Here $|\cdot|$ denotes the absolute value at all archimedian places.

\paragraph{Miscellanea.} Similar notations will be employed for analogous constructions over more general bases. In this way, we shall consider the \textquotedblleft universal situation\textquotedblright\, $(\pi:\SCC_{g,n}\rightarrow\SCM_{g,n};\sigma_{1},\ldots,\sigma_{n})$, where $\SCM_{g,n}$ is the Deligne-Mumford stack of $n$-pointed stable curves of genus $g$, $\SCC_{g,n}$ the universal family and $\sigma_{1},\ldots,\sigma_{n}$ the universal sections. We then have \textquotedblleft universal hermitian line bundles\textquotedblright\, $\lambda(\omega_{\SCC_{g,n}/\SCM_{g,n}}^{k+1}(k\sigma_{1}+\ldots+k\sigma_{n}))\mid_{\SM_{g,n};Q}$, $\sigma^{\ast}_{j}(\omega_{\SCC_{g,n}/\SCM_{g,n}})\mid_{\SM_{g,n};W}$, etc. ($\SM_{g,n}$ is the open substack of smooth curves). When the context is clear enough, we freely write $\lambda_{k+1;g,n;Q}$, $\sigma_{j}^{\ast}(\omega_{\SCC_{g,n}/\SCM_{g,n}})_{W}$, $\psi_{g,n;W}$, etc.

If $F$ is an algebraic stack of finite type over $\Spec\Int$, then we denote by $F^{\an}$  the analytic stack associated to $F_{\CC}$. For instance, applied to $\SM_{g,n}$ and $\SCM_{g,n}$, we obtain the analytic stack $\SM_{g,n}^{\an}$ of $n$-punctured Riemann surfaces of genus $g$ and its Deligne-Mumford stable compactification $\SCM_{g,n}^{\an}$. To a morphism $F\rightarrow G$ between algebraic stacks of finite type over $\Spec\Int$, corresponds a morphism between analytic stacks $F^{\an}\rightarrow G^{\an}$.

For the theory of algebraic stacks we follow the references \cite{DM} and \cite{LMB}. For pointed stable curves we refer to the original article of Knudsen \cite{Knudsen}. Concerning the arithmetic intersection theory, we follow the several extensions of the theory of Gillet-Soul\'e \cite{GS}--\cite{GS2} developed by Bost \cite{Bost}, K\"uhn \cite{Kuhn} and Burgos-Kramer-K\"uhn \cite{BKK}--\cite{BKK2}. 
\section{Adjunction and clutching morphisms}\label{section:clutching}
\subsection{Adjunction isomorphisms for pointed stable curves}
Let $S$ be a noetherian scheme and $(\pi:X\rightarrow S; \sigma_{1},\ldots,\sigma_{n})$ a $n$-pointed stable curve of genus $g$ \cite[Def. 1.1]{Knudsen}, with $n\geq 1$. The sections $\sigma_{1},\ldots, \sigma_{n}$ lie in the smooth locus of $\pi$ and define disjoint relative effective Cartier divisors. We indistinctly write $\sigma_{1},\ldots, \sigma_{n}$ for the sections or for the Cartier divisors they define.

The theory of determinants of Knudsen-Mumford \cite[Ch. I]{Knudsen-Mumford} is applied below. We deal with \textit{graded} invertible $\OO_{S}$-modules of the form $(L,a)$, $a\in\Int$ (see loc. cit., p. 20). For two such graded invertible sheaves $(L,a)$, $(M,b)$, the commutativity law $(L,a)\otimes_{\OO_{S}} (M,b)\overset{\sim}{\rightarrow} (M,b)\otimes_{\OO_{S}} (L,a)$ is expressed by the Koszul rule $x\otimes y\mapsto (-1)^{ab} y\otimes x$.
\begin{lemma}\label{lemma:2.1}
Let $S^{\prime}\rightarrow S$ be a morphism of noetherian schemes and $(\pi^{\prime}:X^{\prime}\rightarrow S^{\prime};\sigma_{1}^{\prime},\ldots,\sigma_{n}^{\prime})$ the pointed stable curve deduced from $(\pi:X\rightarrow S;$ $\sigma_{1},\ldots,\sigma_{n})$ by base change to $S^{\prime}$. For chosen integers $1\leq j_{1}<\ldots<j_{l}\leq n$, put $\sigma=\sigma_{j_1}+\ldots+\sigma_{j_l}$ (resp. $\sigma^{\prime}=\sigma_{j_1}^{\prime}+\ldots+\sigma_{j_l}^{\prime}$). Then we have canonical isomorphisms of $\OO_{X^{\prime}}$-modules
\begin{displaymath}
	p^{\ast}(\OO_{\sigma})\overset{\sim}{\longrightarrow}\OO_{\sigma^{\prime}}
\end{displaymath}
and
\begin{displaymath}
	p^{\ast}(\OO_{X}(-\sigma)/\OO_{X}(-\sigma)^{2})\overset{\sim}{\longrightarrow}\OO_{X^{\prime}}(-\sigma^{\prime})/
	\OO_{X^{\prime}}(-\sigma^{\prime})^{2},
\end{displaymath}
where $p:X^{\prime}=X\times_{S}S^{\prime}\rightarrow X$ is the first projection.
\end{lemma}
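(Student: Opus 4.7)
The plan is to reduce both isomorphisms to a single input: that $\sigma$ is a \emph{relative} effective Cartier divisor on $X/S$, so that its ideal sheaf $\OO_X(-\sigma)$ is an invertible $\OO_X$-module and $\OO_\sigma$ is flat over $\OO_S$. This flatness is what makes base change work, and it holds because the sections $\sigma_{j_1},\ldots,\sigma_{j_l}$ are disjoint and lie in the smooth locus of $\pi$: locally near each $\sigma_{j_k}$ the ideal is generated by a single function whose image in every geometric fiber remains a non-zero-divisor, and $\OO_{\sigma_{j_k}} \simeq \OO_S$ is visibly $S$-flat.

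For the first isomorphism I would start from the exact sequence
\begin{displaymath}
0\longrightarrow \OO_X(-\sigma)\longrightarrow \OO_X \longrightarrow \OO_\sigma\longrightarrow 0.
\end{displaymath}
Since $\OO_\sigma$ is $S$-flat, one has $\operatorname{Tor}_1^{\OO_S}(\OO_\sigma,\OO_{S^\prime})=0$, so pullback by $p$ preserves exactness. The identification $p^{\ast}\OO_X(-\sigma) \overset{\sim}{\to} \OO_{X^\prime}(-\sigma^\prime)$ is the standard compatibility of invertible ideals of relative Cartier divisors with base change (the pulled-back generator still generates the ideal of $\sigma^\prime$ because $\sigma^\prime$ is by definition the pullback of $\sigma$ as a closed subscheme). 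Combining this with $p^{\ast}\OO_X=\OO_{X^\prime}$ gives the desired canonical isomorphism $p^{\ast}\OO_\sigma \overset{\sim}{\to}\OO_{X^\prime}/\OO_{X^\prime}(-\sigma^\prime)=\OO_{\sigma^\prime}$.

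For the second isomorphism I would exploit that $\OO_X(-\sigma)$ is invertible, which yields a canonical identification of the conormal sheaf
\begin{displaymath}
\OO_X(-\sigma)/\OO_X(-\sigma)^2 \;\overset{\sim}{\longrightarrow}\; \OO_X(-\sigma)\otimes_{\OO_X}\OO_\sigma,
\end{displaymath}
obtained by tensoring the sequence above with the invertible sheaf $\OO_X(-\sigma)$. Pull this by $p$, commute $p^{\ast}$ past the tensor product, and substitute the two compatibilities already established ($p^{\ast}\OO_X(-\sigma)\simeq\OO_{X^\prime}(-\sigma^\prime)$ and $p^{\ast}\OO_\sigma\simeq\OO_{\sigma^\prime}$) to conclude.

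The only subtle point is keeping track of canonicity: both isomorphisms are built from the base-change identification of the ideal of a Cartier divisor with the ideal of its pullback, together with the canonical right-exactness and tensor compatibilities of $p^{\ast}$, so functoriality is automatic. The potential obstacle—non-vanishing $\operatorname{Tor}_1$ when pulling back $\OO_\sigma$—does not arise precisely because of the smooth-locus hypothesis on the sections, which forces $\OO_\sigma$ to be $S$-flat; this is the single geometric input making the whole argument go through.
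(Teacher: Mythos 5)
Your proof is correct and takes essentially the same route as the paper, whose entire argument is the one-line observation that $\OO_{\sigma}$ and $\OO_{X}(-\sigma)/\OO_{X}(-\sigma)^{2}$ are flat over $\OO_{S}$; you simply spell out how that flatness (plus the standard base-change compatibility for the invertible ideal of a relative Cartier divisor, and the identification $\OO_{X}(-\sigma)/\OO_{X}(-\sigma)^{2}\simeq\OO_{X}(-\sigma)\otimes_{\OO_{X}}\OO_{\sigma}$, which is the paper's own (2.3)) yields the two isomorphisms. No issues.
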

\begin{proof}
Immediate from the flatness of the $\OO_{S}$-modules $\OO_{\sigma}=\OO_{X}/\OO_{X}(-\sigma)$ and $\OO_{X}(-\sigma)/\OO_{X}(-\sigma)^{2}$.
\end{proof}
\begin{proposition}\label{proposition:2.1}
Let $\sigma=\sigma_{j_1}+\ldots+\sigma_{j_{l}}$, for chosen indexes $1\leq j_{1}<\ldots<j_{l}\leq n$. Then there is a canonical isomorphism
\begin{displaymath}
	\Res_{\sigma}:\omega_{X/S}(\sigma)\otimes_{\OO_{X}}\OO_{\sigma}\overset{\sim}{\longrightarrow}\OO_{\sigma},
\end{displaymath}
compatible with base change.
\end{proposition}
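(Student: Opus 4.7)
The plan is to realize $\Res_{\sigma}$ as the classical Poincar\'e residue along $\sigma$. Since the sections $\sigma_{j_1},\ldots,\sigma_{j_l}$ are pairwise disjoint, there is a canonical direct-sum decomposition $\OO_{\sigma}\cong\bigoplus_{i}\OO_{\sigma_{j_i}}$, and for each $i$ the line bundle $\OO_{X}(\sigma)$ restricts along $\sigma_{j_i}$ to $\OO_{X}(\sigma_{j_i})\mid_{\sigma_{j_i}}$ (the remaining factors being canonically trivialized on $\sigma_{j_i}$ by disjointness). It therefore suffices to construct a canonical residue isomorphism for a single section $\tau$, and then assemble by direct sum.

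For such a section $\tau:S\hookrightarrow X$, which lies in the smooth locus of $\pi$, the morphism $\pi$ is smooth of relative dimension one in a Zariski neighbourhood of $\tau(S)$, so there $\omega_{X/S}$ coincides with $\Omega^{1}_{X/S}$ and is invertible. Writing $I=\OO_{X}(-\tau)$ for the ideal sheaf, the conormal exact sequence
\[
0 \longrightarrow I/I^{2} \longrightarrow \Omega^{1}_{X/S}\otimes\OO_{\tau} \longrightarrow \Omega^{1}_{\tau/S} \longrightarrow 0
\]
degenerates to an isomorphism $I/I^{2}\overset{\sim}{\longrightarrow}\omega_{X/S}\otimes\OO_{\tau}$, because $\pi\circ\tau=\mathrm{id}_{S}$ forces $\Omega^{1}_{\tau/S}=0$. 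Combining this with the canonical duality pairing $(I/I^{2})\otimes(\OO_{X}(\tau)\mid_{\tau})\overset{\sim}{\longrightarrow}\OO_{\tau}$, arising from the evaluation $I\otimes\OO_{X}(\tau)\rightarrow\OO_{X}$, yields
\[
\Res_{\tau}:\omega_{X/S}(\tau)\otimes\OO_{\tau}\;\overset{\sim}{\longrightarrow}\;(I/I^{2})\otimes (I/I^{2})^{\vee}\;\overset{\sim}{\longrightarrow}\;\OO_{\tau}.
\]
In a local generator $t$ of $I$ this map sends $(f\,dt/t)\otimes 1$ to $f\mid_{\tau}$. Independence of the choice of $t$ is immediate: a change $t\mapsto ut$ with $u$ a local unit modifies $dt/t$ by the regular form $du/u$, whose residue vanishes.

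Base change compatibility then follows formally. The two ingredients of the construction, namely the conormal sequence and the pairing $I\otimes\OO_{X}(\tau)\rightarrow\OO_{X}$, involve only the $\OO_{S}$-flat modules $\OO_{\sigma}$ and $I/I^{2}$, whose pullbacks are controlled by Lemma \ref{lemma:2.1}, while $\omega_{X/S}$ and $\OO_{X}(\sigma)$ themselves commute with arbitrary base change. The main obstacle in writing this out, such as it is, is bookkeeping: one must check that the disjoint decomposition $\OO_{\sigma}=\bigoplus_{i}\OO_{\sigma_{j_i}}$ and the various canonical dualities are all stable under pullback. The underlying mathematics is standard adjunction theory for smooth divisors, and no deeper input is required.
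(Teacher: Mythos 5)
Your proposal is correct and follows essentially the same route as the paper: both rest on the conormal isomorphism $\OO_{X}(-\sigma)/\OO_{X}(-\sigma)^{2}\overset{\sim}{\rightarrow}\omega_{X/S}\otimes\OO_{\sigma}$ induced by the differential (the paper cites \cite[Prop. 17.2.5]{EGA4} where you derive it from the conormal sequence and $\Omega^{1}_{\tau/S}=0$), followed by tensoring with $\OO_{X}(\sigma)$ via the canonical pairing $\OO_{\sigma}\otimes\OO_{X}(-\sigma)\cong\OO_{X}(-\sigma)/\OO_{X}(-\sigma)^{2}$, with base change handled by the same flatness argument as in Lemma \ref{lemma:2.1}. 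Your preliminary reduction to a single section via the decomposition $\OO_{\sigma}=\bigoplus_{i}\OO_{\sigma_{j_i}}$ is a harmless reorganization that the paper skips by treating all the disjoint sections at once.
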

\begin{proof}
First of all $\sigma_{1},\ldots,\sigma_{n}$ are disjoint sections of $\pi$ lying in the smooth locus. Therefore the differential $d_{X/S}:\OO_{X}\rightarrow\Omega_{X/S}$ \cite[Sec. 16.3]{EGA4} induces an isomorphism
\begin{displaymath}
	\widetilde{d}_{X/S}:\OO_{X}(-\sigma)/\OO_{X}(-\sigma)^{2}\overset{\sim}{\longrightarrow}\omega_{X/S}\otimes_{\OO_{X}}\OO_{\sigma}
\end{displaymath}
(see \cite[Prop. 17.2.5]{EGA4}). If $S^{\prime}\rightarrow S$ is a morphism of noetherian schemes, then there is a commutative diagram \cite[Sec. 16.4]{EGA4}
\begin{equation}\label{equation:2.2}
	\xymatrix{
	(\OO_{X}(-\sigma)/\OO_{X}(-\sigma)^{2})\otimes_{\OO_{S}}\OO_{S^{\prime}}\ar[r]^{\hspace{0.5cm} \widetilde{d}_{X/S}}\ar[d]_{\wr} & (\omega_{X/S}\otimes_{\OO_{X}}\OO_{\sigma})\otimes_
	{\OO_{S}}\OO_{S^{\prime}}\ar[d]^{\wr}\\
	\OO_{X^{\prime}}(-\sigma^{\prime})/\OO_{X^{\prime}}(-\sigma^{\prime})^{2}\ar[r]^{\hspace{0.5cm} \widetilde{d}_{X^{\prime}/S^{\prime}}} & \omega_{X^{\prime}/S^{\prime}}
	\otimes_{\OO_{X^{\prime}}}\OO_{\sigma^{\prime}}.
	}
\end{equation}
The vertical arrows are isomorphisms due to Lemma \ref{lemma:2.1} and the compatibility of the relative dualizing sheaf with base change. 

To conclude, we notice the canonical isomorphism
\begin{equation}\label{equation:2.3}
	\OO_{\sigma}\otimes_{\OO_{X}}\OO_{X}(-\sigma)\overset{\sim}{\longrightarrow}\OO_{X}(-\sigma)/\OO_{X}(-\sigma)^{2},	
\end{equation}
compatible with base change. The morphism $\Res_{\sigma}$ is then obtained from $\widetilde{d}_{X/S}^{-1}$, tensoring by $\OO_{X}(\sigma)$ and taking (\ref{equation:2.3}) into account. The compatibility of $\Res_{\sigma}$ with base change follows from (\ref{equation:2.2}).
\end{proof}
\begin{proposition}\label{proposition:2.2}
Let $k\geq 1$ be an integer and $\sigma=\sigma_{1}+\ldots+\sigma_{n}$.\\
i. There are exact sequences of locally free sheaves on $S$
\begin{equation}\label{equation:2.4}
	\begin{split}
	&0\longrightarrow\pi_{\ast}(\omega_{X/S}^{k+1}(k\sigma_{1}+\ldots+k\sigma_{n}))\\
	&\hspace{1cm}\longrightarrow\pi_{\ast}(\omega_{X/S}^{k+1}(k\sigma_{1}+\ldots+(k+1)\sigma_{j}+\ldots+k\sigma_{n}))
	\overset{\pi_{\ast}\Res_{\sigma_j}^{k+1}}{\longrightarrow}\OO_{S}\longrightarrow 0
	\end{split}
\end{equation}
and
\begin{equation}\label{equation:2.5}
	\begin{split}
	&0\longrightarrow\pi_{\ast}(\omega_{X/S}^{k+1}(k\sigma_{1}+\ldots+k\sigma_{n}))\\
	&\hspace{1cm}\longrightarrow\pi_{\ast}(\omega_{X/S}(\sigma_{1}+\ldots+\sigma_{n})^{k+1})
	\overset{\pi_{\ast}\Res_{\sigma}^{k+1}}{\longrightarrow}\OO_{S}^{\oplus n}\longrightarrow 0,	
	\end{split}
\end{equation}
compatible with base change.\\
ii. Let $\delta_{j}:\OO_{S}\rightarrow\OO_{S}^{\oplus n}$ be the inclusion into the $j$-th factor. There is a natural commutative diagram
\begin{displaymath}
	\xymatrix{
		\pi_{\ast}(\omega_{X/S}^{k+1}(k\sigma_{1}+\ldots+k\sigma_{n}))\ar[r]^{\hspace{2.2cm}\pi_{\ast}\Res_{\sigma_{j}}^{k+1}}\ar[d]	&\OO_{S}\ar[d]^{\delta_{j}}\\
		\pi_{\ast}(\omega_{X/S}(\sigma_{1}+\ldots+\sigma_{n})^{k+1})\ar[r]^{\hspace{2.2cm}\pi_{\ast}\Res_{\sigma}^{k+1}}	&\OO_{S}^{\oplus n},
	}
\end{displaymath}
compatible with base change.
\end{proposition}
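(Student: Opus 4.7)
The plan is to deduce both exact sequences from natural short exact sequences on $X$, identify the cokernels via the residue isomorphism of Proposition \ref{proposition:2.1}, and push down to $S$ after verifying the vanishing of $R^{1}\pi_{\ast}$ on the first two terms.

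For (\ref{equation:2.4}) I would start with the tautological short exact sequence of $\OO_{X}$-modules
\begin{displaymath}
	0\to\omega_{X/S}^{k+1}(k\sigma_{1}+\ldots+k\sigma_{n})\to\omega_{X/S}^{k+1}(k\sigma_{1}+\ldots+(k+1)\sigma_{j}+\ldots+k\sigma_{n})\to Q_{j}\to 0,
\end{displaymath}
whose cokernel is $Q_{j}=\omega_{X/S}^{k+1}(k\sigma_{1}+\ldots+(k+1)\sigma_{j}+\ldots+k\sigma_{n})\otimes_{\OO_{X}}\OO_{\sigma_{j}}$. Because the sections are pairwise disjoint, tensoring with $\OO_{\sigma_{j}}$ trivialises the contributions of the divisors $k\sigma_{i}$ for $i\neq j$, and one is left with $Q_{j}\cong\omega_{X/S}(\sigma_{j})^{k+1}\otimes_{\OO_{X}}\OO_{\sigma_{j}}$. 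The $(k+1)$-th tensor power of the isomorphism provided by Proposition \ref{proposition:2.1}, applied with $\sigma=\sigma_{j}$, then yields $Q_{j}\cong\OO_{\sigma_{j}}$ via $\Res_{\sigma_{j}}^{k+1}$. For (\ref{equation:2.5}) the same procedure with $\sigma=\sigma_{1}+\ldots+\sigma_{n}$ produces the cokernel $\omega_{X/S}(\sigma)^{k+1}\otimes_{\OO_{X}}\OO_{\sigma}\cong\bigoplus_{j}\OO_{\sigma_{j}}$, invoking disjointness of the sections and Proposition \ref{proposition:2.1} at each $\sigma_{j}$.

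To extract (\ref{equation:2.4})--(\ref{equation:2.5}) from these sequences, it suffices to push forward to $S$ and to check that $R^{1}\pi_{\ast}$ kills the left-hand and middle terms. For any such sheaf $\mathcal{L}$, relative Serre duality on the family of pointed stable curves gives $R^{1}\pi_{\ast}\mathcal{L}\cong(\pi_{\ast}(\mathcal{L}^{\vee}\otimes\omega_{X/S}))^{\vee}$, and $\mathcal{L}^{\vee}\otimes\omega_{X/S}$ is a negative tensor power of $\omega_{X/S}(\sigma_{1}+\ldots+\sigma_{n})$ possibly twisted by effective anti-ample corrections at the sections. Since $\omega_{X/S}(\sigma_{1}+\ldots+\sigma_{n})$ has strictly positive degree on every irreducible component of every geometric fiber (this is the stability hypothesis) and $k\geq 1$, the resulting sheaf has strictly negative degree on every such component, hence no global sections on the fibers, hence vanishing pushforward. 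Combined with $\pi_{\ast}\OO_{\sigma_{j}}=\OO_{S}$ (because $\sigma_{j}$ is a section), the long exact sequence of higher direct images yields the desired exact sequences of locally free $\OO_{S}$-modules. Base-change compatibility is then automatic: once $R^{1}\pi_{\ast}=0$, cohomology commutes with arbitrary base change, while $\Res$ itself commutes with base change by the second half of Proposition \ref{proposition:2.1}.

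Part (ii) should be a tautology of these constructions. The inclusion $\omega_{X/S}^{k+1}(k\sigma_{1}+\ldots+k\sigma_{n})\hookrightarrow\omega_{X/S}(\sigma_{1}+\ldots+\sigma_{n})^{k+1}$ factors through $\omega_{X/S}^{k+1}(k\sigma_{1}+\ldots+(k+1)\sigma_{j}+\ldots+k\sigma_{n})$, the very sheaf used to define the residue at $\sigma_{j}$, and by the local nature of $\Res_{\sigma_{j}}$ its image under $\pi_{\ast}\Res_{\sigma}^{k+1}$ sits in the $j$-th copy of $\OO_{S}$ in $\OO_{S}^{\oplus n}$. This is precisely the description of $\delta_{j}\circ\pi_{\ast}\Res_{\sigma_{j}}^{k+1}$. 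I expect the only real technical hinge of the argument to be the clean application of relative Serre duality in this nodal setting, where $\omega_{X/S}$ is merely a relative dualizing sheaf; this is classical for pointed stable curves, but it is what underwrites both the $R^{1}\pi_{\ast}$ vanishing and the local freeness of the pushforwards appearing in (\ref{equation:2.4})--(\ref{equation:2.5}).
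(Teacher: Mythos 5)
Your proposal follows essentially the same route as the paper: both start from the short exact sequence on $X$ obtained by identifying the cokernel with $\OO_{\sigma}$ via the residue isomorphism of Proposition \ref{proposition:2.1}, push forward using the vanishing of $R^{1}\pi_{\ast}$ on the first two terms, and deduce base-change compatibility from that vanishing (the paper makes the last point explicit with a five-lemma diagram, which is the same content as your appeal to cohomology and base change). The only difference is that you justify the $R^{1}\pi_{\ast}$ vanishing by fiberwise Serre duality and the positivity of $\omega_{X/S}(\sigma_{1}+\ldots+\sigma_{n})$ on every component of every geometric fiber, a correct argument that the paper leaves implicit behind a citation to Knudsen.
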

\begin{proof}
\textit{i}. Let us establish the exact sequence (\ref{equation:2.5}). We leave the proof of (\ref{equation:2.4}) as an analogous exercise. By Proposition \ref{proposition:2.1}, we have an exact sequence of coherent sheaves on $X$
\begin{equation}\label{equation:2.6}
\begin{split}
	&0\longrightarrow\omega_{X/S}^{k+1}(k\sigma_{1}+\ldots+k\sigma_{n})\\
	&\hspace{1cm}\longrightarrow\omega_{X/S}(\sigma_{1}+\ldots+\sigma_{n})^{k+1}
	\overset{\Res_{\sigma}^{k+1}}{\rightarrow}\OO_{\sigma}\longrightarrow 0,
	\end{split}
\end{equation}
commuting with base change. The first two sheaves in (\ref{equation:2.6}) are flat over $S$ and have vanishing higher direct images $R^{1}\pi_{\ast}$. Then, from \cite[Cor. 1.5]{Knudsen}, $\pi_{\ast}(\omega_{X/S}^{k+1}(k\sigma_{1}+\ldots+k\sigma_{n}))$ and $\pi_{\ast}(\omega_{X/S}(\sigma_{1}+\ldots+\sigma_{n})^{k+1})$ are locally free and commute with base change. Furthermore $\pi_{\ast}\OO_{\sigma}=\OO_{S}^{\oplus n}$. Therefore, for any morphism $p:S^{\prime}\rightarrow S$ of noetherian schemes, we have a commutative diagram of exact sequences
\begin{displaymath}
	\xymatrix{
		0\ar[r]	&p^{\ast}\pi_{\ast}(\omega_{X/S}^{k+1}(k\sigma_{1}+\ldots+k\sigma_{n}))\ar[d]_{\wr}^{\alpha}\ar[r]
			&p^{\ast}\pi_{\ast}(\omega_{X/S}(\sigma_{1}+\ldots+\sigma_{n})^{k+1})\ar[d]_{\wr}^{\beta}\\
		0\ar[r]	&\pi^{\prime}_{\ast}(\omega_{X^{\prime}/S^{\prime}}^{k+1}(k\sigma_{1}+\ldots+k\sigma_{n}))\ar[r]
			&\pi^{\prime}_{\ast}(\omega_{X^{\prime}/S^{\prime}}(\sigma_{1}^{\prime}+\ldots+\sigma_{n}^{\prime})^{k+1})\\
	}
\end{displaymath}
\begin{displaymath}
	\xymatrix{
		&\hspace{5.5cm}	&\ar[r]^{\hspace{-1cm}p^{\ast}\pi_{\ast}\Res_{\sigma}}	&\OO_{S^{\prime}}^{\oplus n}\ar[r]\ar@{-->}[d]^{\gamma} &0\\
		&\hspace{5.5cm}	&\ar[r]^{\hspace{-1cm}\pi^{\prime}_{\ast}\Res_{\sigma^{\prime}}}	&\OO_{S^{\prime}}^{\oplus n}\ar[r]	&0.
	}
\end{displaymath}
Observe that the vertical arrow $\gamma$ is completely determined by $\alpha$ and $\beta$. In addition, by the five lemma, $\gamma$ is an isomorphism. This completes the proof of the first item.\\
\textit{ii}. The second item is clear from the definition of $\Res_{\sigma_{j}}$ and $\Res_{\sigma}$.
\end{proof}
\begin{corollary}\label{corollary:2.1}
Let $k\geq 0$ be an integer. There are canonical isomorphisms of graded invertible sheaves on $S$
\begin{equation}\label{equation:2.7}
	\begin{split}
	&\det R\pi_{\ast}(\omega_{X/S}^{k+1}(k\sigma_{1}+(k+1)\sigma_{j}+\ldots+k\sigma_{n}))\\
	&\hspace{1cm}\overset{\sim}{\longrightarrow}\det R\pi_{\ast}(\omega_{X/S}^{k+1}(k\sigma_{1}+\ldots+k\sigma_{n}))
	\otimes_{\OO_{S}} (\OO_{S},1)
	\end{split}
\end{equation}
and
\begin{equation}\label{equation:2.8}
	\begin{split}
	&\det R\pi_{\ast}(\omega_{X/S}(\sigma_{1}+\ldots+\sigma_{n})^{k+1})\\
	&\hspace{1cm}\overset{\sim}{\longrightarrow}\det R\pi_{\ast}(\omega_{X/S}^{k+1}(k\sigma_{1}+\ldots+k\sigma_{n}))
	\otimes_{\OO_{S}} (\OO_{S},n),
	\end{split}
\end{equation}
compatible with base change.
\end{corollary}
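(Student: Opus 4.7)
The plan is to deduce the corollary by applying the Knudsen--Mumford determinant functor to the short exact sequences of sheaves on $X$ already produced (implicitly) in the proof of Proposition \ref{proposition:2.2}, rather than to the sequences of direct images on $S$ themselves. This uniformizes the argument for all $k\geq 0$, in particular covering the case $k=0$ where $R^{1}\pi_{\ast}\omega_{X/S}$ does not vanish and Proposition \ref{proposition:2.2} as stated no longer applies.

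First I would record, for every $k\geq 0$ and every $j\in\{1,\ldots,n\}$, the two fundamental short exact sequences of coherent $\OO_{X}$-modules
\begin{equation*}
0\longrightarrow \omega_{X/S}^{k+1}(k\sigma_{1}+\ldots+k\sigma_{n})\longrightarrow
\omega_{X/S}^{k+1}(k\sigma_{1}+\ldots+(k+1)\sigma_{j}+\ldots+k\sigma_{n})
\overset{\Res_{\sigma_{j}}^{k+1}}{\longrightarrow}\OO_{\sigma_{j}}\longrightarrow 0
\end{equation*}
and
\begin{equation*}
0\longrightarrow \omega_{X/S}^{k+1}(k\sigma_{1}+\ldots+k\sigma_{n})\longrightarrow
\omega_{X/S}(\sigma_{1}+\ldots+\sigma_{n})^{k+1}
\overset{\Res_{\sigma}^{k+1}}{\longrightarrow}\OO_{\sigma}\longrightarrow 0,
\end{equation*}
both obtained from Proposition \ref{proposition:2.1} (tensoring with suitable line bundles and using the isomorphism $\OO_{\sigma}\otimes\OO_{X}(-\sigma)\simeq\OO_{X}(-\sigma)/\OO_{X}(-\sigma)^{2}$ when passing to cokernels). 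By Proposition \ref{proposition:2.1} these sequences are compatible with arbitrary base change $S^{\prime}\to S$.

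Next I would observe that all three terms in each sequence are $S$-flat with cohomology commuting with base change (for the first two terms this is \cite[Cor. 1.5]{Knudsen}; for $\OO_{\sigma_{j}}$ and $\OO_{\sigma}$ it is immediate from Lemma \ref{lemma:2.1}). Hence $R\pi_{\ast}$ of each sequence is a distinguished triangle of perfect complexes on $S$, compatible with base change. Applying the Knudsen--Mumford determinant functor, which is multiplicative on distinguished triangles in both the $\OO_{S}$-module structure and the $\Int$-grading, yields canonical isomorphisms
\begin{equation*}
\det R\pi_{\ast}\bigl(\omega_{X/S}^{k+1}(k\sigma_{1}+\ldots+(k+1)\sigma_{j}+\ldots+k\sigma_{n})\bigr)
\overset{\sim}{\longrightarrow}
\det R\pi_{\ast}\bigl(\omega_{X/S}^{k+1}(k\sigma_{1}+\ldots+k\sigma_{n})\bigr)\otimes
\det R\pi_{\ast}\OO_{\sigma_{j}}
\end{equation*}
and
\begin{equation*}
\det R\pi_{\ast}\bigl(\omega_{X/S}(\sigma_{1}+\ldots+\sigma_{n})^{k+1}\bigr)
\overset{\sim}{\longrightarrow}
\det R\pi_{\ast}\bigl(\omega_{X/S}^{k+1}(k\sigma_{1}+\ldots+k\sigma_{n})\bigr)\otimes
\det R\pi_{\ast}\OO_{\sigma}.
\end{equation*}

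Finally, since each $\sigma_{j}:S\hookrightarrow X$ is a section of $\pi$, one has $R\pi_{\ast}\OO_{\sigma_{j}}=\sigma_{j}^{\ast}\OO_{X}=\OO_{S}$ concentrated in degree zero, of generic rank $1$, so $\det R\pi_{\ast}\OO_{\sigma_{j}}=(\OO_{S},1)$. Summing over the disjoint sections gives $R\pi_{\ast}\OO_{\sigma}=\OO_{S}^{\oplus n}$ and $\det R\pi_{\ast}\OO_{\sigma}=(\OO_{S},n)$. This produces the isomorphisms (\ref{equation:2.7}) and (\ref{equation:2.8}) with the correct gradings, and base change compatibility is inherited from the base change compatibility of the defining short exact sequences and of the Knudsen--Mumford determinant of perfect complexes. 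There is no substantive obstacle here; the only point to watch is to formulate everything in the derived category so that the argument works verbatim for $k=0$, where one cannot simply take top exterior powers of the sequences of Proposition \ref{proposition:2.2}.
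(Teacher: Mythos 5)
Your proof is correct and is essentially the paper's own argument: the paper treats the case $k=0$ exactly as you do, applying the Knudsen--Mumford determinant to the residue exact sequence (\ref{equation:2.9}) on $X$ and using $\det R\pi_{\ast}\OO_{\sigma}=(\OO_{S},n)$. The only difference is that for $k\geq 1$ the paper instead invokes Proposition \ref{proposition:2.2} and takes determinants of the resulting exact sequences of locally free sheaves on $S$, whereas your uniform derived-level treatment of the sequences on $X$ removes that case split at no cost.
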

\begin{proof}
For $k\geq 1$, the corollary already follows from Proposition \ref{proposition:2.2} and the theory of determinants of Knudsen-Mumford \cite[Ch. I]{Knudsen-Mumford}. The case $k=0$ is individually handled. For the proof of (\ref{equation:2.8}), first of all Proposition \ref{proposition:2.1} provides an exact sequence
\begin{equation}\label{equation:2.9}
	0\longrightarrow\omega_{X/S}\longrightarrow\omega_{X/S}(\sigma_{1}+\ldots+\sigma_{n})
	\overset{\Res_{\sigma}}{\longrightarrow}\OO_{\sigma}\longrightarrow 0.
\end{equation}
By \cite[Ch. I, p. 46]{Knudsen-Mumford}, attached to (\ref{equation:2.9}) there is a canonical isomorphism of graded invertible sheaves
\begin{displaymath}
	\det R\pi_{\ast}\omega_{X/S}(\sigma_{1}+\ldots+\sigma_{n})\overset{\sim}{\longrightarrow}
	(\det R\pi_{\ast}\omega_{X/S})\otimes_{\OO_{S}}(\det R\pi_{\ast}\OO_{\sigma}),
\end{displaymath}
compatible with base change. Moreover $\det R\pi_{\ast}\OO_{\sigma}=(\OO_{S},n)$, canonically and commuting with base change. The claim follows. The argument for (\ref{equation:2.7}) is analogous.
\end{proof}
\begin{remark}
\textit{i}. The preceding considerations can be effected in the category of complex analytic spaces. The previous results are then compatible with the analytification functor from the category of schemes of finite type over $\CC$ to the category of complex analytic spaces.\\
\textit{ii}. Let $(\pi:X\rightarrow\Spec\CC;\sigma_{1},\ldots,\sigma_{n})$ be a pointed stable curve. By Chow's lemma we may identify $X$ with its associated complex analytic curve $X^{\an}$. For every $j=1,\ldots, n$, let $u_{j}$ be an analytic coordinate in a neighborhood of $\sigma_{j}$, with $u_{j}(\sigma_{j})=0$. Consider $\theta\in H^{0}(X,\omega_{X}(\sigma_{1}+\ldots+\sigma_{n})^{k+1})$ and express $\theta=\theta_{j}(u_{j})(du_{j}/u_{j})^{k+1}$ in the coordinate $u_{j}$, with $\theta_{j}$ holomorphic at $0$. Then, for $\sigma=\sigma_{1}+\ldots+\sigma_{n}$,
\begin{displaymath}
	\pi_{\ast}\Res_{\sigma}^{k+1}\theta=(\theta_{1}(0),\ldots,\theta_{n}(0)).
\end{displaymath}
Observe that --as expected-- this is coordinate independent. The analogue expression for $\pi_{\ast}\Res_{\sigma_{j}}^{k+1}$ is also true.
\end{remark}
\subsection{Clutching morphisms and determinants}
Let $(\pi_{1}:X_{1}\rightarrow S;\sigma_{1},\ldots,\sigma_{n+1})$ and $(\pi_{2}:X_{2}\rightarrow S;\tau_{1},\ldots,\tau_{m+1})$ be two pointed stable curves over a noetherian scheme $S$. Knudsen's clutching construction \cite[Th. 3.4 and Def. 3.6]{Knudsen} applied to the prestable curve $X_{1}\sqcup X_{2}\rightarrow S$, together with the sections $\sigma_{n+1}$, $\tau_{m+1}$, produces a new prestable curve $\pi:X\rightarrow S$. The sections $\sigma_{1},\ldots,\sigma_{n}$, $\tau_{1},\ldots,\tau_{m}$ define sections of $\pi$. The tuple $(\pi:X\rightarrow S;\sigma_{1},\ldots,\sigma_{n},\tau_{1},\ldots,\tau_{m})$ is a pointed stable curve.
\begin{proposition}\label{proposition:2.3}
Let $k\geq 1$ be an integer. Suppose that $S$ is integral. Then there exists a canonical isomorphism of graded invertible sheaves
\begin{displaymath}
	\begin{split}
	&\det R\pi_{\ast}\omega_{X/S}^{k+1}(k\sigma_{1}+\ldots+k\sigma_{n}+k\tau_{1}+\ldots+k\tau_{m})\\
	&\hspace{1cm}\overset{\sim}{\longrightarrow}(\det R\pi_{1\ast}\omega_{X_{1}/S}^{k+1}(k\sigma_{1}+\ldots+k\sigma_{n+1}))\\
	&\hspace{2cm}\otimes_{\OO_{S}}(\det R\pi_{2\ast}\omega_{X_{2}/S}^{k+1}(k\tau_{1}+\ldots+k\tau_{m+1}))\otimes_{\OO_{S}}
	(\OO_{S},1),
	\end{split}
\end{displaymath}
compatible with base change.
\end{proposition}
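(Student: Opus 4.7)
The plan is to exploit the partial normalization of $X$ at the node $\nu$ produced by the clutching, and couple the resulting short exact sequence with the adjunction results of the previous subsection. Let $p : \tilde{X} := X_{1} \sqcup X_{2} \to X$ denote this normalization, sitting in the canonical short exact sequence of $\OO_{X}$-modules
\begin{equation*}
	0 \longrightarrow \OO_{X} \longrightarrow p_{\ast}\OO_{\tilde{X}} \longrightarrow \OO_{\nu} \longrightarrow 0.
\end{equation*}
Set $L := \omega_{X/S}^{k+1}(k\sigma_{1} + \ldots + k\sigma_{n} + k\tau_{1} + \ldots + k\tau_{m})$, whose divisor does not meet $\nu$. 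Tensoring with $L$, using the projection formula, and invoking the adjunction isomorphisms $p^{\ast}\omega_{X/S}|_{X_{1}} \cong \omega_{X_{1}/S}(\sigma_{n+1})$ and $p^{\ast}\omega_{X/S}|_{X_{2}} \cong \omega_{X_{2}/S}(\tau_{m+1})$ for a nodal normalization, one obtains
\begin{equation*}
	0 \longrightarrow L \longrightarrow p_{\ast}\mathcal{L} \longrightarrow L|_{\nu} \longrightarrow 0,
\end{equation*}
where $\mathcal{L}$ is the line bundle on $\tilde{X}$ whose restriction to $X_{1}$ (resp. $X_{2}$) is $L_{1} := \omega_{X_{1}/S}^{k+1}(k\sigma_{1} + \ldots + k\sigma_{n} + (k+1)\sigma_{n+1})$ (resp. $L_{2} := \omega_{X_{2}/S}^{k+1}(k\tau_{1} + \ldots + k\tau_{m} + (k+1)\tau_{m+1})$).

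For $k \geq 1$, a fiberwise degree bound yields $R^{1}\pi_{\ast}L = R^{1}\pi_{1\ast}L_{1} = R^{1}\pi_{2\ast}L_{2} = 0$, so the pushforwards are locally free $\OO_{S}$-modules commuting with arbitrary base change by \cite[Cor. 1.5]{Knudsen}. Applying $\pi_{\ast}$ produces a short exact sequence of locally free $\OO_{S}$-modules
\begin{equation*}
	0 \longrightarrow \pi_{\ast}L \longrightarrow \pi_{1\ast}L_{1} \oplus \pi_{2\ast}L_{2} \longrightarrow \pi_{\ast}(L|_{\nu}) \longrightarrow 0.
\end{equation*}
The stalk $\omega_{X/S,\nu}$ admits a canonical generator: combining the residue isomorphisms of Proposition \ref{proposition:2.1} on the two branches of $\nu$, the inclusion $\omega_{X/S,\nu} \hookrightarrow (p_{\ast}p^{\ast}\omega_{X/S})_{\nu}$ is identified with the anti-diagonal in $\OO_{\nu_{1}} \oplus \OO_{\nu_{2}}$, singling out a canonical trivialization of $L|_{\nu} = \omega_{X/S,\nu}^{\otimes(k+1)}$ that is compatible with base change. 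The Knudsen-Mumford determinant functor then provides a canonical, base-change-compatible isomorphism of graded invertible sheaves
\begin{equation*}
	\det R\pi_{\ast}L \otimes_{\OO_{S}}(\OO_{S},1) \overset{\sim}{\longrightarrow} \det R\pi_{1\ast}L_{1} \otimes_{\OO_{S}} \det R\pi_{2\ast}L_{2}.
\end{equation*}

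To finish, I apply (\ref{equation:2.7}) twice --- at $\sigma_{n+1}$ on $X_{1}$ and at $\tau_{m+1}$ on $X_{2}$ --- producing isomorphisms $\det R\pi_{j\ast}L_{j} \overset{\sim}{\rightarrow} \det R\pi_{j\ast}L_{j}' \otimes_{\OO_{S}}(\OO_{S},1)$, where $L_{j}'$ has coefficient $k$ (instead of $k+1$) at the clutching section. Inserting these substitutions into the preceding display and cancelling $(\OO_{S},1) \otimes (\OO_{S},1) \otimes (\OO_{S},1)^{-1} = (\OO_{S},1)$ delivers the claimed isomorphism, with base-change compatibility inherited at each step. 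The principal delicate point I foresee is the canonical trivialization of $L|_{\nu}$ via the adjunction residues at the node --- this encodes the geometric difference between a node and a smooth section --- together with the careful tracking of Knudsen-Mumford gradings and Koszul signs so that exactly one residual factor $(\OO_{S},1)$ survives in the final formula.
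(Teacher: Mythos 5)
Your proposal is correct, and while it rests on the same geometric input as the paper -- the restriction map to the two branches, the residue identification at the node, and the bookkeeping of $(\OO_{S},1)$ factors -- it is packaged genuinely differently. The paper (Construction \ref{construction:1} and Lemma \ref{lemma:2.2}) builds, locally on $S$, a non-canonical isomorphism $\Phi$ onto $\pi_{1\ast}L_{1}'\oplus\pi_{2\ast}L_{2}'\oplus\Delta$ by choosing splittings $\gamma_{1},\gamma_{2}$ of the residue sequences, proves $\Phi$ is an isomorphism by reduction to geometric points (this is where the integrality of $S$ enters, for injectivity), and then checks that $\det\Phi$ is independent of the choices so that the local determinants glue. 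You instead realize the same map $\varphi$ as the kernel term of the globally defined short exact sequence obtained by tensoring the normalization sequence $0\to\OO_{X}\to p_{\ast}\OO_{\tilde{X}}\to\OO_{\nu}\to 0$ with $L$, trivialize the cokernel $L|_{\nu}$ once and for all via the residues, and invoke multiplicativity of the Knudsen--Mumford determinant, postponing the coefficient shift at the clutching sections to two applications of (\ref{equation:2.7}). This buys you two things: the splitting/gluing/independence-of-choices step disappears, and the hypothesis that $S$ be integral becomes superfluous, since exactness of your sheaf sequence follows from Knudsen's pushout description of the clutching together with local freeness of $L$, rather than from a generic-fiber argument. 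The two points you flag as delicate are indeed the real content and are exactly what the paper verifies in Construction \ref{construction:1} \textit{iii}: the generator of $\omega_{X/S}|_{\nu}$ has residues $(1,-1)$ on the ordered pair of branches, so its $(k+1)$-st power lands on the twisted anti-diagonal $\lbrace(x,(-1)^{k+1}x)\rbrace$, and the sign is pinned down only because the clutching data orders $X_{1}$ before $X_{2}$; with that fixed, your count of one surviving factor $(\OO_{S},1)$ agrees with the statement.
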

The proof of the proposition will reduce to the construction described below.
\begin{construction}\label{construction:1}
\textit{i}. By \cite[Th. 3.4]{Knudsen}, $X$ comes equipped with finite morphisms of $S$-schemes
\begin{displaymath}
	\iota_{1}:X_{1}\longrightarrow X,\quad\iota_{2}:X_{2}\longrightarrow X.
\end{displaymath}
By the properties of the relative dualizing sheaf \cite[Sec. 1]{Knudsen}, there is a natural morphism of coherent sheaves
\begin{equation}\label{equation:2.10}
	\begin{split}
	&\omega_{X/S}^{k+1}(k\sigma_{1}+\ldots+k\sigma_{n}+k\tau_{1}+\ldots+k\tau_{m})\\
	&\hspace{1cm}\longrightarrow\iota_{1\ast}\omega_{X_{1}/S}^{k+1}(k\sigma_{1}+\ldots+k\sigma_{n}+(k+1)\sigma_{n+1})\\
	&\hspace{2cm}\oplus\iota_{2\ast}\omega_{X_{2}/S}^{k+1}(k\tau_{1}+\ldots+k\tau_{n}+(k+1)\tau_{m+1}).
	\end{split}
\end{equation}
If $k\geq 1$, applying $\pi_{\ast}$ to (\ref{equation:2.10}), we deduce a morphism of locally free coherent sheaves
\begin{displaymath}
	\begin{split}
	&\varphi:\pi_{\ast}\omega_{X/S}^{k+1}(k\sigma_{1}+\ldots+k\sigma_{n}+k\tau_{1}+\ldots+k\tau_{m})\\
	&\hspace{1cm}\longrightarrow\pi_{1\ast}\omega_{X_{1}/S}^{k+1}(k\sigma_{1}+\ldots+k\sigma_{n}+(k+1)\sigma_{n+1})\\
	&\hspace{2cm}\oplus\pi_{2\ast}\omega_{X_{2}/S}^{k+1}(k\tau_{1}+\ldots+k\tau_{n}+(k+1)\tau_{m+1}).
	\end{split}
\end{displaymath}
Notice that the formation of $\varphi$ commutes with base change.\\
\textit{ii}. Replacing $S$ by a smaller Zariski open subset, we may assume that the exact sequences (\ref{equation:2.3}) for both $(\pi_{1};\sigma_{1},\ldots,\sigma_{n+1})$ (with $j=n+1$) and $(\pi_{2};\tau_{1},\ldots,\tau_{m+1})$ (with $j=m+1$) split. Let $\gamma_{1}$ be a lift of $1\in\OO_{S}$ by $\pi_{1\ast}\Res_{\sigma_{n+1}}^{k+1}$, and similarly introduce $\gamma_{2}$. We then have \textit{non-canonical} isomorphisms
\begin{equation}\label{equation:2.11}
	\begin{split}
		\psi_{1}:&\pi_{1\ast}\omega_{X_{1}/S}^{k+1}(k\sigma_{1}+\ldots+k\sigma_{n}+(k+1)\sigma_{n+1})\\
		&\hspace{1cm}\longrightarrow\pi_{1\ast}\omega_{X_{1}/S}^{k+1}(k\sigma_{1}+\ldots+k\sigma_{n+1})\oplus\OO_{S}\\
		&\hspace{0.8cm}\theta\longmapsto(\theta-(\pi_{1\ast}\Res_{\sigma_{n+1}}^{k+1}\theta)\gamma_{1}, \pi_{1\ast}\Res_{\sigma_{n+1}}^{k+1}\theta)
	\end{split}
\end{equation}
and an analogously constructed $\psi_{2}$.\\
\textit{iii}. Define $\Phi=(\psi_{1}\oplus\psi_{2})\circ\varphi$ and $\widetilde{\Phi}$ the composite of $\Phi$ with the projection onto the factor $\OO_{S}\oplus\OO_{S}$. We claim that $\Imag\widetilde{\Phi}$ is contained in the $\OO_{S}$-submodule $\Delta:=\lbrace (x,(-1)^{k+1}x)\in\OO_{S}\oplus\OO_{S}\rbrace$. This is a local assertion: we first reduce to $S$ being an affine local noetherian scheme. Secondly, by Nakayama's lemma, we may assume that $S=\Spec k$, for $k$ an algebraically closed field. In this case the claim follows by the properties of the dualizing sheaf \cite[Sec. 1]{Knudsen} and the very construction of $\Phi$. This completes the proof. Consequently, in the definition of $\Phi$, we are allowed to replace $\OO_{S}\oplus\OO_{S}$ by the $\OO_{S}$-module $\Delta\simeq\OO_{S}$.
\end{construction}
\begin{lemma}\label{lemma:2.2}
Suppose that $S$ is integral. Then $\Phi$ is an isomorphism.
\end{lemma}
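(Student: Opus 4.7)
The plan is to show that both the source and the target of $\Phi$ are locally free $\OO_S$-modules of the same rank, and then to verify fiberwise injectivity of $\Phi$. A morphism of locally free sheaves of equal rank that is injective on every fiber is an isomorphism, so this is enough.

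The local freeness of the source and of each $M_i := \pi_{i*}\omega_{X_i/S}^{k+1}(k\sigma_1+\ldots+k\sigma_{n+1})$, resp.\ $\pi_{i*}\omega_{X_i/S}^{k+1}(k\tau_1+\ldots+k\tau_{m+1})$, follows from \cite[Cor.\ 1.5]{Knudsen}, once one observes (via Serre duality on the geometric fibers, using stability and $k\geq 1$) that the higher direct images vanish. The target is $M_1\oplus M_2\oplus \Delta$ with $\Delta\simeq \OO_S$. Riemann--Roch on a geometric fiber, with $g(X)=g_1+g_2$, yields rank $(2k+1)(g_1+g_2-1)+k(n+m)$ for the source, and $[(2k+1)(g_1-1)+k(n+1)]+[(2k+1)(g_2-1)+k(m+1)]+1$ for the target, which simplifies to the same value. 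Since the constructions of $\varphi$, $\psi_1$, $\psi_2$ commute with base change (once the splittings $\gamma_i$ are fixed), so does $\Phi$, so it is enough to check that $\Phi\otimes k(s)$ is an isomorphism for each $s\in S$, and by dimension matching it suffices to prove injectivity. I may further base change to $\overline{k(s)}$ and thus assume $S=\Spec k$ with $k$ algebraically closed.

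Assume now $\theta\in H^0(X,\omega_X^{k+1}(k\sigma_1+\ldots+k\sigma_n+k\tau_1+\ldots+k\tau_m))$ satisfies $\Phi(\theta)=0$. Writing $R_1:=\Res_{\sigma_{n+1}}^{k+1}(\iota_1^{*}\theta)$ and using the explicit formula
\[
\psi_1(\iota_1^{*}\theta)=\bigl(\iota_1^{*}\theta-R_1\gamma_1,\,R_1\bigr),
\]
the vanishing of both coordinates forces $R_1=0$ and hence $\iota_1^{*}\theta=0$. The same argument yields $\iota_2^{*}\theta=0$. The combined morphism $\iota_1\sqcup\iota_2:X_1\sqcup X_2\to X$ is the partial normalization separating the clutched node, hence surjective; thus $\theta$ vanishes on each irreducible component of $X$. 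Since $\omega_X^{k+1}(\ldots)$ is invertible and the intersection of the ideal sheaves of the two components of the nodal curve $X$ is zero (locally at a node one has $\OO_{X}=k[[x,y]]/(xy)$ with component ideals $(x)$ and $(y)$, whose intersection is $(xy)=0$), we conclude $\theta=0$.

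The main technical point is the rank count, which relies on the uniform vanishing of $R^{1}\pi_{*}$ for the relevant sheaves on every fiber; this is a routine consequence of stability, $k\geq 1$, and Serre duality. Once this is in place the remainder of the argument is a direct unwinding of the definitions of $\varphi$ and $\psi_i$ together with the elementary geometry of a nodal curve.
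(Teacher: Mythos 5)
Your proof is correct, but it follows a genuinely different route from the paper's. The paper splits the verification into injectivity and surjectivity: injectivity is checked at the generic point (this is where the integrality of $S$ enters, since the kernel of a map of locally free sheaves over an integral scheme is torsion-free), while surjectivity is reduced via Nakayama to geometric fibers and then deduced from the local description of the relative dualizing sheaf at a node (one must actually \emph{produce} a section of $\omega_{X}^{k+1}(\cdots)$ gluing two given sections with matching residues). You instead prove fiberwise injectivity everywhere and replace the surjectivity step by a rank count via Riemann--Roch, using that a fiberwise injective map of locally free sheaves of equal rank is an isomorphism. The rank bookkeeping checks out: both sides have rank $(2k+1)(g_{1}+g_{2}-1)+k(n+m)$, and your fiberwise kernel computation (vanishing of the second coordinate of $\psi_{i}$ forces $R_{i}=0$, hence $\iota_{i}^{\ast}\theta=0$, and then $(x)\cap(y)=0$ in $k[[x,y]]/(xy)$ kills $\theta$) is exactly the content of the paper's ``clear by construction'' for injectivity. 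What your approach buys is that you never have to solve the gluing problem at the node, and in fact you do not use integrality of $S$ at all, so your argument proves the lemma over an arbitrary noetherian base on which Construction \ref{construction:1} can be carried out; what it costs is the reliance on cohomology and base change (vanishing of $R^{1}\pi_{\ast}$ on all fibers, which you correctly flag and which the paper also uses in Proposition \ref{proposition:2.2}) to identify $\Phi\otimes k(s)$ with the map on $H^{0}$ of the fiber.
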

\begin{proof}
To check that $\Phi$ is an isomorphism, we may assume that $S$ is an affine local scheme.\\
\textit{Injectivity}. Since $S$ is integral, we reduce to the case $S=\Spec k$, where $k$ is an algebraically closed field. The injectivity is then clear by construction of $\Phi$.\\
\textit{Surjectivity}. By Nakayama's lemma, we reduce again to the case $S=\Spec k$, where $k$ is an algebraically closed field. The surjectivity of $\Phi$ is then a consequence of the properties of the relative dualizing sheaf \cite[Sec. 1]{Knudsen}.
\end{proof}
We are ready to complete the proof of Proposition \ref{proposition:2.3}.
\begin{proof}[Proof of Proposition \ref{proposition:2.3}]
Let $\lbrace U_{i}\rbrace_{i}$ be a finite open covering of $S$, such that over every $U_{i}$ there exists an isomorphism $\Phi_{i}$ as in Construction \ref{construction:1}. The isomorphisms $\Phi_{i}$ depend on the choices $(\gamma_{1}^{(i)}$, $\gamma_{2}^{(i)})$. Since $S$ is integral, the $\Phi_{i}$ induce isomorphisms
\begin{displaymath}
	\begin{split}
	\det\Phi_{i}: &\det R\pi_{\ast}\omega_{X/S}^{k+1}(k\sigma_{1}+\ldots+k\sigma_{n}+k\tau_{1}+\ldots+k\tau_{m})\mid_{U_i}\\
	&\hspace{1cm}\overset{\sim}{\longrightarrow}(\det R\pi_{1\ast}\omega_{X_{1}/S}^{k+1}(k\sigma_{1}+\ldots+k\sigma_{n+1}))\\
	&\hspace{2cm}\otimes_{\OO_{S}}(\det R\pi_{2\ast}\omega_{X_{2}/S}^{k+1}(k\tau_{1}+\ldots+k\tau_{m+1}))
	\otimes_{\OO_{S}}(\OO_{S},1)\mid_{U_{i}}.
	\end{split}
\end{displaymath}
One easily checks that the isomorphisms $\det\Phi_{i}$ do \textit{not} depend on the choices $(\gamma_{1}^{(i)},\gamma_{2}^{(i)})$. By construction, the $\det\Phi_{i}$ commute with base change. The collection $\lbrace\det\Phi_{i}\rbrace_{i}$ can be glued into a single canonical isomorphism defined over $S$, compatible with base change. The proposition is proven. 
\end{proof}
\begin{remark}
The preceding constructions can be made in the category of analytic spaces. They are preserved by the analytification functor from the category of schemes of finite type over $\CC$ to the category of analytic spaces.
\end{remark}
\section{Tautological line bundles and Mumford isomorphisms}\label{section:tautological}
Let $g,n \geq 0$ be integers with $2g-2+n>0$ and $\SCM_{g,n}\rightarrow\Spec\Int$ the Deligne-Mumford algebraic stack classifying $n$-pointed stable curves of genus $g$ \cite{Knudsen}. We let $\pi:\SCC_{g,n}\rightarrow\SCM_{g,n}$ be the universal curve and $\sigma_{1},\ldots,\sigma_{n}$ the universal sections of $\pi$. We proceed to introduce the tautological line bundles on $\SCM_{g,n}$ and establish relations between them: the Mumford isomorphisms.
\subsection{Tautological line bundles on $\SCM_{g,n}$}
\begin{definition}
The \textit{tautological line bundles} on $\SCM_{g,n}$ are
\begin{displaymath}
	\begin{split}
		&\lambda_{k+1;g,n}=\det R\pi_{\ast}\omega_{\SCC_{g,n}/\SCM_{g,n}}^{k+1}(k\sigma_{1}+\ldots+k\sigma_{n}),\quad k\geq 0,\\
		&\delta_{g,n}=\OO(\pd\SM_{g,n}),\\
		&\psi_{g,n}^{(j)}=\sigma_{j}^{\ast}\omega_{\SCC_{g,n}/\SCM_{g,n}},\quad j=1,\ldots,n,\\
		&\psi_{g,n}=\otimes_{j}\psi_{g,n}^{(j)},\\
		&\kappa_{g,n}=\langle\omega_{\SCC_{g,n}/\SCM_{g,n}}(\sigma_{1}+\ldots+\sigma_{n}),\omega_{\SCC_{g,n}/\SCM_{g,n}}(\sigma_{1}+\ldots+\sigma_{n})\rangle,
	\end{split}
\end{displaymath}
where $\langle\cdot,\cdot\rangle$ denotes the Deligne pairing \cite[XVIII]{SGA4}, \cite{Elkik}.
\end{definition}
\begin{proposition}\label{proposition:3.1}
Let $k\geq 0$ be an integer. Then there exists an isomorphism of line bundles on $\SCM_{g,n}$, uniquely determined up to a sign,
\begin{displaymath}
	\lambda_{k+1;g,n}\overset{\sim}{\longrightarrow}\det R\pi_{\ast}(\omega_{\SCC_{g,n}/\SCM_{g,n}}(\sigma_{1}+\ldots+\sigma_{n})^{k+1}).
\end{displaymath}
\end{proposition}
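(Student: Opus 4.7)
My plan is to deduce Proposition 3.1 as a formal consequence of the second canonical isomorphism of Corollary 2.1, transplanted from noetherian schemes to the universal pointed stable curve $\pi:\SCC_{g,n}\to\SCM_{g,n}$. Since the work of Section 2 is stated over a noetherian base, I would pick an étale atlas $U\to\SCM_{g,n}$ and apply Corollary 2.1 to the pulled-back pointed stable curve, obtaining over $U$ a canonical isomorphism of graded invertible sheaves
$$\det R\pi_\ast(\omega_{\SCC_{g,n}/\SCM_{g,n}}(\sigma_1+\ldots+\sigma_n)^{k+1})\mid_U \overset{\sim}{\longrightarrow}\lambda_{k+1;g,n}\mid_U\otimes_{\OO_U}(\OO_U,n).$$
The compatibility with base change asserted in Corollary 2.1 forces these local isomorphisms to agree on the double intersections of any such atlas, so they descend uniquely to a global isomorphism of graded invertible sheaves on $\SCM_{g,n}$.

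Next, I would forget the gradings. The graded line bundle $(\OO_{\SCM_{g,n}},n)$ has underlying ungraded line bundle canonically equal to $\OO_{\SCM_{g,n}}$, and the forgetful functor from graded to ungraded invertibles respects tensor products (it merely discards the Koszul signs governing the symmetry constraint). Inverting the resulting isomorphism of ordinary line bundles yields the desired
$$\lambda_{k+1;g,n}\overset{\sim}{\longrightarrow}\det R\pi_\ast(\omega_{\SCC_{g,n}/\SCM_{g,n}}(\sigma_1+\ldots+\sigma_n)^{k+1}).$$

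For the uniqueness up to a sign, I would invoke the fact that $\SCM_{g,n}$ is a geometrically connected, proper and reduced Deligne-Mumford stack over $\Spec\Int$ (Knudsen), so that $\Gamma(\SCM_{g,n},\OO)=\Int$ and hence $\Gamma(\SCM_{g,n},\OO^{\ast})=\lbrace\pm 1\rbrace$. Any two isomorphisms between the same pair of line bundles on $\SCM_{g,n}$ therefore differ by a global unit, which can only be $\pm 1$. I do not foresee a substantive obstacle: the argument is a mechanical unwinding of Section 2 together with this standard units computation on $\SCM_{g,n}$, the only mild subtlety being the descent of the isomorphism from an étale atlas to the stack, which is automatic thanks to the base change compatibility built into Corollary 2.1.
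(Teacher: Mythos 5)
Your proposal is correct and follows essentially the same route as the paper: the existence is read off from the second isomorphism of Corollary \ref{corollary:2.1} (the paper calls this a ``reformulation'' of (\ref{equation:2.8}), leaving implicit the descent from an atlas and the forgetting of the grading that you spell out), and the uniqueness up to sign is exactly the units computation $\Gamma(\SCM_{g,n},\OO^{\ast})=\lbrace\pm 1\rbrace$, which the paper outsources to \cite[Cor. 3.2]{SingARR} rather than re-proving.
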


\begin{proof}
The existence of the isomorphism is a reformulation of Corollary \ref{corollary:2.1} (\ref{equation:2.8}). The uniqueness assertion follows from \cite[Cor. 3.2]{SingARR}.
\end{proof}
\begin{proposition}\label{proposition:3.2}
Let $\beta:\SCM_{g_{1},n_{1}+1}\times\SCM_{g_{2},n_{2}+1}\rightarrow\SCM_{g_{1}+g_{2},n_{1}+n_{2}}$ be Knudsen's clutching morphism \cite[Th. 3.4 and Def. 3.6]{Knudsen}. Then there are isomorphisms of line bundles on $\SCM_{g_{1},n_{1}+1}\times\SCM_{g_{2},n_{2}+1}$, uniquely determined up to a sign,
\begin{align}
		&\beta^{\ast}\lambda_{k+1;g,n}\overset{\sim}{\rightarrow}\lambda_{k+1;g_{1},n_{1}+1}\boxtimes\lambda_{k+1;g_{2},n_{2}+1},\quad k\geq 0,
		\label{equation:3.1}\\
		&\beta^{\ast}\delta_{g,n}\overset{\sim}{\rightarrow}(\delta_{g_{1},n_{1}+1}\otimes\psi^{(n_{1}+1)\,-1}_{g_{1},n_{1}+1})
		\boxtimes(\delta_{g_{2},n_{2}+1}\otimes\psi^{(n_{2}+1)\,-1}_{g_{2},n_{2}+1}),\label{equation:3.2}\\
		&\beta^{\ast}\psi_{g,n}\overset{\sim}{\rightarrow}(\psi_{g_{1},n_{1}+1}\otimes\psi_{g_{1},n_{1}+1}^{(n_{1}+1)\,-1})
		\boxtimes(\psi_{g_{2},n_{2}+1}\otimes\psi_{g_{2},n_{2}+1}^{(n_{2}+1)\,-1}),\label{equation:3.3}\\
		&\beta^{\ast}\kappa_{g,n}\overset{\sim}{\rightarrow}\kappa_{g_{1},n_{1}+1}\boxtimes\kappa_{g_{2},n_{2}+1}.\label{equation:3.4}
\end{align}
\end{proposition}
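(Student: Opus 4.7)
I would work throughout over the integral Deligne--Mumford stack $T:=\SCM_{g_1,n_1+1}\times\SCM_{g_2,n_2+1}$, with its two universal pointed stable curves $(\pi_i:X_i\to T;\sigma^{(i)}_\bullet)$ and the clutched curve $(\pi:X\to T;\sigma_1,\ldots,\sigma_{n_1},\tau_1,\ldots,\tau_{n_2})$ constructed in Section~\ref{section:clutching}. In each of (\ref{equation:3.1})--(\ref{equation:3.4}), uniqueness up to a sign reduces to the assertion that two isomorphisms of line bundles on $T$ differ by an element of $\Gamma(T,\mathcal{O}_T^{\ast})=\{\pm 1\}$, which follows from \cite[Cor.~3.2]{SingARR} exactly as in the proof of Proposition~\ref{proposition:3.1}. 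I focus on producing the isomorphisms.

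For (\ref{equation:3.1}) with $k\geq 1$, the isomorphism is a direct transcription of Proposition~\ref{proposition:2.3} (applied \'etale-locally on $T$), once one forgets the $\Int$-grading on the factor $(\mathcal{O}_T,1)$. For the remaining case $k=0$, I would first apply Proposition~\ref{proposition:3.1} to rewrite both sides through $\det R\pi_\ast\omega(\sigma_\bullet)$, and then run the analogue of Construction~\ref{construction:1} starting from the residue exact sequence
\begin{equation*}
0\to\omega_{X/T}(\sigma_\bullet)\to\iota_{1\ast}\omega_{X_1/T}(\sigma^{(1)}_\bullet)\oplus\iota_{2\ast}\omega_{X_2/T}(\sigma^{(2)}_\bullet)\to\mathcal{O}_{\mathrm{node}}\to 0.
\end{equation*}
Since $n_i+1\geq 1$, one checks by Serre duality on each nodal fibre that the relevant $R^1\pi_{i\ast}$ vanish, so the same determinantal argument as Proposition~\ref{proposition:2.3} applies, modulo the sign in the diagonal subsheaf $\Delta$ at the node.

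For (\ref{equation:3.3}), I would use the identity $\iota_i^\ast\omega_{X/T}\simeq\omega_{X_i/T}(\sigma^{(i)}_{n_i+1})$ from \cite[Sec.~1]{Knudsen}: since $\sigma_1,\ldots,\sigma_{n_1}$ and $\tau_1,\ldots,\tau_{n_2}$ are disjoint from the respective node sections, pullback yields $\sigma_j^\ast\omega_{X/T}\simeq\psi^{(j)}_{g_1,n_1+1}$ on the first factor and symmetrically on the second, and tensoring reorganises into the stated formula. For (\ref{equation:3.2}), $\beta$ factors through the irreducible boundary component $\Delta_\beta\subset\SCM_{g,n}$ classifying curves of topological type $(g_1,n_1+1)\sqcup(g_2,n_2+1)$; the remaining components of $\partial\SM_{g,n}$ meet $\mathrm{im}(\beta)$ transversally in $\partial\SM_{g_1,n_1+1}\times\SCM_{g_2,n_2+1}\cup\SCM_{g_1,n_1+1}\times\partial\SM_{g_2,n_2+1}$ and contribute the factor $\delta_{g_1,n_1+1}\boxtimes\delta_{g_2,n_2+1}$. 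The remaining factor $\beta^\ast\mathcal{O}(\Delta_\beta)$ is the normal bundle of $\mathrm{im}(\beta)$, which the standard local smoothing model $uv=t$ of the node identifies with $\psi^{(n_1+1)\,-1}_{g_1,n_1+1}\boxtimes\psi^{(n_2+1)\,-1}_{g_2,n_2+1}$ (the inverse reflecting that the parameter $t$ pairs the two tangent lines at the node).

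For (\ref{equation:3.4}), I would appeal to the functoriality of the Deligne pairing \cite{Elkik} together with the observation that $\iota_i^\ast\omega_{X/T}(\sigma)\simeq\omega_{X_i/T}(\sigma^{(i)}_1+\ldots+\sigma^{(i)}_{n_i+1})$ is the full log-dualizing sheaf on $X_i$, including the node section. For a curve with a single node whose normalisation is $X_1\sqcup X_2$, the Deligne self-pairing of $\omega(\sigma)$ decomposes as the tensor product of the corresponding self-pairings on the $X_i$, since the node contribution cancels thanks to the residue-sum relation built into $\omega(\sigma)$. This yields $\beta^\ast\kappa_{g,n}\simeq\kappa_{g_1,n_1+1}\boxtimes\kappa_{g_2,n_2+1}$. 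I expect the main obstacle to be the $k=0$ case of (\ref{equation:3.1}), which is not directly covered by Proposition~\ref{proposition:2.3} and requires a separate but parallel residue-sequence argument with a careful bookkeeping of signs; the other three isomorphisms follow routinely from Section~\ref{section:clutching} combined with the standard local geometry of the boundary of $\SCM_{g,n}$.
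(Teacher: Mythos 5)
Your proposal is correct, but it does noticeably more work than the paper does. The paper's proof of Proposition \ref{proposition:3.2} disposes of \eqref{equation:3.2}--\eqref{equation:3.4} and of \eqref{equation:3.1} for $k=0$ by a single citation to \cite[Prop. 3.7]{SingARR}, derives \eqref{equation:3.1} for $k\geq 1$ from Proposition \ref{proposition:2.3}, and obtains uniqueness from \cite[Cor. 3.2]{SingARR}. You handle the $k\geq 1$ case and the uniqueness in exactly the same way, and then reconstruct from scratch the content of the cited result: the residue exact sequence at the node for $\lambda_{1}$, the identification $\iota_i^{\ast}\omega_{X/T}\simeq\omega_{X_i/T}(\sigma^{(i)}_{n_i+1})$ for $\psi$, the normal-bundle formula $\beta^{\ast}\OO(\Delta_{\beta})\simeq\psi^{(n_1+1)\,-1}_{g_1,n_1+1}\boxtimes\psi^{(n_2+1)\,-1}_{g_2,n_2+1}$ for $\delta$, and the componentwise decomposition of the Deligne self-pairing of the log-dualizing sheaf for $\kappa$. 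These are the standard arguments and they are sound. Two small remarks: for the $k=0$ case of \eqref{equation:3.1} you do not actually need the vanishing of the $R^{1}\pi_{i\ast}$ (which can fail, e.g. for $R^{1}\pi_{\ast}\omega_{X/T}$ when $n_1=n_2=0$), since the multiplicativity of $\det R\pi_{\ast}$ on exact triangles of perfect complexes already yields the determinant isomorphism; and in the Deligne-pairing step the assertion that \emph{the node contribution cancels} deserves an actual argument, e.g. via the symbol description of $\langle\cdot,\cdot\rangle$ with divisors supported away from the node, or via the cohomological expression \eqref{equation:3.9} combined with the determinant computations you already carried out. What the paper's route buys is brevity; what yours buys is a proof that is self-contained rather than deferred to the prequel.
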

\begin{proof}
For the isomorphisms (\ref{equation:3.2})--(\ref{equation:3.4}) and (\ref{equation:3.1}) for $k=0$, we refer to \cite[Prop. 3.7]{SingARR}. The isomorphism (\ref{equation:3.1}) for $k\geq 1$ is deduced from Proposition \ref{proposition:2.3}. The uniqueness assertion comes from \cite[Cor. 3.2]{SingARR}.
\end{proof}
\begin{corollary}\label{corollary:3.1}
Let $\gamma:\SCM_{g,n}\times\SCM_{1,1}^{\times n}\rightarrow\SCM_{g+n,0}$ be obtained by reiterated applications of clutching morphisms. Then we have isomorphisms, uniquely determined up to a sign,
\begin{align}
	&\gamma^{\ast}\lambda_{k+1;g+n,0}\overset{\sim}{\rightarrow}\lambda_{k+1;g,n}\boxtimes\lambda_{k+1;1,1}
	^{\boxtimes n},\quad k\geq 0,\label{equation:3.5}\\
	&\gamma^{\ast}\delta_{g+n,0}\overset{\sim}{\rightarrow}(\delta_{g,n}\otimes\psi_{g,n}^{-1})\boxtimes
	(\delta_{1,1}\otimes\psi_{1,1}^{-1})^{\boxtimes n},\label{equation:3.6}\\
	&\gamma^{\ast}\kappa_{g+n,0}
	\overset{\sim}{\rightarrow}\kappa_{g,n}\boxtimes\kappa_{1,1}^{\boxtimes n}.\label{equation:3.7}
\end{align}
\end{corollary}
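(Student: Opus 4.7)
The plan is to argue by induction on $n$, iterating Proposition~\ref{proposition:3.2}. For the inductive step I factor $\gamma$ as
\[
\gamma = \gamma^{\prime} \circ \bigl(\beta \times \mathrm{id}^{\times (n-1)}\bigr),
\]
where $\beta\colon \SCM_{g,n}\times\SCM_{1,1}\to\SCM_{g+1,n-1}$ is the single clutching morphism that attaches the first elliptic tail to the $n$-th section of the genus $g$ curve, and $\gamma^{\prime}\colon\SCM_{g+1,n-1}\times\SCM_{1,1}^{\times(n-1)}\to\SCM_{g+n,0}$ is the analogue of $\gamma$ for the pair $(g+1,n-1)$, to which the inductive hypothesis applies.

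The base case $n=1$ just says $\gamma=\beta\colon\SCM_{g,1}\times\SCM_{1,1}\to\SCM_{g+1,0}$, so each of (\ref{equation:3.5})--(\ref{equation:3.7}) follows directly from the corresponding identity of Proposition~\ref{proposition:3.2}, on noting that $\psi_{g,1}=\psi_{g,1}^{(1)}$ and $\psi_{1,1}=\psi_{1,1}^{(1)}$. For (\ref{equation:3.5}) and (\ref{equation:3.7}) the inductive step is then immediate, since (\ref{equation:3.1}) and (\ref{equation:3.4}) introduce no correction factors: one simply combines the inductive hypothesis with a single application of these identities.

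The substance of the argument is (\ref{equation:3.6}), where I must check that the $\psi$-corrections produced at each clutching telescope correctly. By the inductive hypothesis, $\gamma^{\prime\,\ast}\delta_{g+n,0}$ carries a factor $\delta_{g+1,n-1}\otimes\psi_{g+1,n-1}^{-1}$ on the first slot. Pulling back by $\beta$ and invoking (\ref{equation:3.2}) and (\ref{equation:3.3}) yields
\[
\beta^{\ast}\delta_{g+1,n-1}\cong\bigl(\delta_{g,n}\otimes\psi_{g,n}^{(n)\,-1}\bigr)\boxtimes\bigl(\delta_{1,1}\otimes\psi_{1,1}^{-1}\bigr)
\]
together with
\[
\beta^{\ast}\psi_{g+1,n-1}\cong\Bigl(\bigotimes_{j=1}^{n-1}\psi_{g,n}^{(j)}\Bigr)\boxtimes\OO_{\SCM_{1,1}},
\]
where the right-hand sides simplify because $\psi_{g,n}=\bigotimes_{j=1}^{n}\psi_{g,n}^{(j)}$ and $\psi_{1,1}=\psi_{1,1}^{(1)}$. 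Multiplying these two identities, the $\psi$-contributions on the $\SCM_{g,n}$ slot combine to $\psi_{g,n}^{(n)\,-1}\otimes\bigotimes_{j=1}^{n-1}\psi_{g,n}^{(j)\,-1}=\psi_{g,n}^{-1}$, while the $\SCM_{1,1}$ slot retains exactly $\delta_{1,1}\otimes\psi_{1,1}^{-1}$. The factors $(\delta_{1,1}\otimes\psi_{1,1}^{-1})^{\boxtimes(n-1)}$ supplied by the inductive hypothesis are unaffected by the pullback, and (\ref{equation:3.6}) follows.

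The only real difficulty is the combinatorial bookkeeping: one must correctly identify the clutched section at each iteration and verify the cancellations above. Uniqueness up to sign is inherited at every step from the corresponding uniqueness in Proposition~\ref{proposition:3.2} (itself resting on \cite[Cor.~3.2]{SingARR}), and is preserved under composition.
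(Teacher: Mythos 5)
Your proof is correct and follows essentially the same route as the paper: the paper's own proof of Corollary \ref{corollary:3.1} simply declares the result ``straightforward from Proposition \ref{proposition:3.2} and \cite[Cor. 3.2]{SingARR}'', and your induction on $n$, with the telescoping of the $\psi^{(j)}$-factors via (\ref{equation:3.2}) and (\ref{equation:3.3}), is exactly the bookkeeping being left implicit there. The uniqueness-up-to-sign argument also matches the paper's appeal to \cite[Cor. 3.2]{SingARR}.
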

\begin{proof}
This is straightforward from Proposition \ref{proposition:3.2} and \cite[Cor. 3.2]{SingARR}.
\end{proof}
\subsection{Mumford isomorphisms}
\begin{theorem}\label{theorem:3.1}
Let $k\geq 0$ be an integer. There is an isomorphism of invertible sheaves on $\SCM_{g,n}$, uniquely determined up to a sign,
\begin{displaymath}
	\DD_{k+1;g,n}:\lambda_{k+1;g,n}^{\otimes 12}\otimes\delta_{g,n}^{-1}\otimes\psi_{g,n}\overset{\sim}{\longrightarrow}\kappa_{g,n}^{\otimes (6k^{2}+6k+1)}.
\end{displaymath}
\end{theorem}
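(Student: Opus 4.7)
The strategy is to reduce to the classical Mumford isomorphism on $\SCM_{g',0}$ for $g' \geq 2$, which is established by applying Grothendieck-Riemann-Roch to $R\pi_{\ast} \omega^{k+1}$ on the universal curve, with corrections at singular fibers producing the $\delta$ term. I do not reprove this; instead, I propagate it to $\SCM_{g,n}$ via the iterated clutching morphism $\gamma: \SCM_{g,n} \times \SCM_{1,1}^{\times n} \rightarrow \SCM_{g+n,0}$ of Corollary \ref{corollary:3.1}. Applying $\gamma^{\ast}$ to the Mumford isomorphism for $\SCM_{g+n,0}$ and invoking the compatibility formulas (\ref{equation:3.5})--(\ref{equation:3.7}) yields an isomorphism on the product
\begin{displaymath}
(\lambda_{k+1;g,n}^{\otimes 12} \otimes \delta_{g,n}^{-1} \otimes \psi_{g,n}) \boxtimes (\lambda_{k+1;1,1}^{\otimes 12} \otimes \delta_{1,1}^{-1} \otimes \psi_{1,1})^{\boxtimes n} \overset{\sim}{\longrightarrow} \kappa_{g,n}^{\otimes (6k^{2}+6k+1)} \boxtimes \kappa_{1,1}^{\otimes n(6k^{2}+6k+1)}.
\end{displaymath}

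The proof then proceeds by cancelling the $\SCM_{1,1}^{\times n}$ factors, which requires the base case $(g,n) = (1,1)$ first. For this base case, I apply the same bootstrapping procedure to $\SCM_{1,1} \times \SCM_{1,1} \rightarrow \SCM_{2,0}$, producing $A \boxtimes A \overset{\sim}{\to} B \boxtimes B$ on $\SCM_{1,1} \times \SCM_{1,1}$, with $A = \lambda_{k+1;1,1}^{\otimes 12} \otimes \delta_{1,1}^{-1} \otimes \psi_{1,1}$ and $B = \kappa_{1,1}^{\otimes(6k^{2}+6k+1)}$. Restricting to $\lbrace x\rbrace \times \SCM_{1,1}$ for any closed point $x$ yields $A \overset{\sim}{\to} B \otimes T$ with $T$ a constant line bundle pulled back from the point, hence trivializable; this gives $A \cong B$ on $\SCM_{1,1}$.

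With the case $(1,1)$ in hand, the factors on $\SCM_{1,1}^{\times n}$ match on both sides of the product isomorphism. Restricting to any fiber $\SCM_{g,n} \times \lbrace x_{1}\rbrace \times \cdots \times \lbrace x_{n}\rbrace$ then produces the desired isomorphism $\DD_{k+1;g,n}$ on $\SCM_{g,n}$, up to tensoring with a trivial line bundle that can be absorbed. The uniqueness of $\DD_{k+1;g,n}$ up to a sign is a formal consequence of \cite[Cor. 3.2]{SingARR}, which asserts that automorphisms of a line bundle on $\SCM_{g,n}$ reduce to $\pm 1$.

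The hardest part will be tracking the sign ambiguities through the successive pullbacks, restrictions, and cancellations: each isomorphism in Proposition \ref{proposition:3.2} and Corollary \ref{corollary:3.1} is canonical only up to a sign, and the restriction to a point introduces an additional trivial line bundle whose identification with $\OO$ carries its own sign. All such ambiguity is swept into the \textquotedblleft up to a sign\textquotedblright\ clause in the statement, courtesy of \cite[Cor. 3.2]{SingARR}. A secondary concern is confirming that the classical Mumford isomorphism holds in the integral setting over $\Spec \Int$, not only after base change to $\QQ$; this is well-known from the original GRR derivation but should be noted in the current formulation.
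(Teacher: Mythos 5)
Your argument is correct in outline but follows a genuinely different route from the paper's. The paper proves Theorem \ref{theorem:3.1} by induction on $k$: the base case $k=0$ is quoted from \cite[Th. 3.10]{SingARR}, and the induction step is Lemma \ref{lemma:3.1}, i.e.\ the isomorphism $\lambda_{k+1;g,n}\cong\lambda_{k;g,n}\otimes\kappa_{g,n}^{\otimes k}$, which is itself obtained from the cohomological expression (\ref{equation:3.9}) for the Deligne pairing together with Serre duality in the cases $(g,0)$, $g\geq 2$, and $(1,1)$, and then propagated to general $(g,n)$ by the same reiterated clutching morphism $\gamma$ that you use. The coefficient $6k^{2}+6k+1$ then falls out of $6(k-1)^{2}+6(k-1)+1+12k$. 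Your proof instead pulls back the full classical Mumford isomorphism on $\SCM_{g+n,0}$ for each $k$ separately and cancels the $\SCM_{1,1}$ factors. What the paper's route buys is economy of input: it needs only the already-established $k=0$ case and the elementary identity (\ref{equation:3.9}). What yours buys is directness: no induction on $k$, and the quadratic coefficient is inherited wholesale from the unpointed case.

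Two points need attention. First, your classical input must be Deligne's functorial Riemann--Roch \cite[Th. 11.4]{Deligne} (as invoked in Proposition \ref{proposition:5.1}), not merely Grothendieck--Riemann--Roch: GRR only gives an identity in $\CH^{1}(\SCM_{g+n,0})\otimes\QQ$, which does not by itself produce an isomorphism of invertible sheaves over $\Spec\Int$. You flag this as a ``secondary concern,'' but it is the load-bearing point of your reduction. Second, the cancellation step as written does not give the integral statement: a closed point $x$ of $\SCM_{1,1}$ over $\Spec\Int$ has finite residue field, so restricting to $\lbrace x\rbrace\times\SCM_{1,1}$ only yields $A\cong B$ over $\SCM_{1,1;\FF_{p}}$, and likewise for the fibers $\SCM_{g,n}\times\lbrace x_{1}\rbrace\times\cdots\times\lbrace x_{n}\rbrace$. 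Either restrict along an integral section $\Spec\Int\rightarrow\SCM_{1,1}$ (the $1$-pointed nodal cubic provides one), or --- cleaner, and what the paper does via the proof of \cite[Th. 3.10]{SingARR} --- use that $pr_{1\ast}\OO=\OO$ for the projection $pr_{1}:\SCM_{g,n}\times\SCM_{1,1}^{\times n}\rightarrow\SCM_{g,n}$, so that $pr_{1}^{\ast}$ is fully faithful on line bundles and an isomorphism of the pullbacks descends to $\SCM_{g,n}$. With these repairs, and with the sign ambiguities absorbed by \cite[Cor. 3.2]{SingARR} exactly as you say, your argument goes through.
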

The proof of the theorem relies on \cite[Th. 3.10]{SingARR} and the following lemma.
\begin{lemma}\label{lemma:3.1}
Let $k\geq 1$ be an integer. There is an isomorphism of invertible sheaves on $\SCM_{g,n}$
\begin{equation}\label{equation:3.8}
	\lambda_{k+1;g,n}\overset{\sim}{\longrightarrow}\lambda_{k;g,n}\otimes\kappa_{g,n}^{\otimes k}.
\end{equation}
\end{lemma}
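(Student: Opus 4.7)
The plan is to deduce the lemma from Deligne's Riemann-Roch isomorphism for the family of stable curves $\pi: \SCC_{g,n} \to \SCM_{g,n}$. Recall that for any two line bundles $L, M$ on $\SCC_{g,n}$, Deligne-Mumford-Knudsen's theory of the determinant of the cohomology provides a canonical (up to sign) isomorphism on $\SCM_{g,n}$
\begin{displaymath}
\langle L, M\rangle \overset{\sim}{\longrightarrow} \det R\pi_*(L\otimes M)\otimes \det R\pi_*\OO_{\SCC_{g,n}}\otimes \det R\pi_*(L)^{-1}\otimes \det R\pi_*(M)^{-1},
\end{displaymath}
together with its bilinearity. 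This formula extends to the universal family of stable curves, whose fibers are at worst nodal, since $\pi$ is a flat Gorenstein morphism and $\SCM_{g,n}$ is regular.

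First I would invoke Proposition \ref{proposition:3.1} to write $\lambda_{k+1;g,n}\cong \det R\pi_*(\overline{\omega}^{k+1})$ and $\lambda_{k;g,n}\cong \det R\pi_*(\overline{\omega}^{k})$, where I abbreviate $\overline{\omega}=\omega_{\SCC_{g,n}/\SCM_{g,n}}(\sigma_1+\ldots+\sigma_n)$. Then I would apply the Deligne-Riemann-Roch isomorphism above with $L=\overline{\omega}^{k}$ and $M=\overline{\omega}$. Together with the bilinearity of the Deligne pairing and the defining equality $\kappa_{g,n}=\langle\overline{\omega},\overline{\omega}\rangle$, this yields
\begin{displaymath}
\kappa_{g,n}^{\otimes k}\cong \langle\overline{\omega}^{k},\overline{\omega}\rangle\cong\lambda_{k+1;g,n}\otimes \lambda_{k;g,n}^{-1}\otimes \det R\pi_*(\overline{\omega})^{-1}\otimes \det R\pi_*\OO_{\SCC_{g,n}}.
\end{displaymath}

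Next I would show that the last two factors cancel. Proposition \ref{proposition:3.1} taken with $k=0$ gives $\det R\pi_*(\overline{\omega})\cong \lambda_{1;g,n}=\det R\pi_*\omega_{\SCC_{g,n}/\SCM_{g,n}}$. Relative Serre duality for the Gorenstein morphism $\pi$, at the level of determinants of perfect complexes, provides a canonical identification $\det R\pi_*\omega_{\SCC_{g,n}/\SCM_{g,n}}\cong \det R\pi_*\OO_{\SCC_{g,n}}$ (the shift by $1$ inverts the determinant while the linear dual inverts it again, and $\pi_*\OO_{\SCC_{g,n}}=\OO_{\SCM_{g,n}}$ by the connectedness and reducedness of fibers). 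Substituting, both $\det R\pi_*(\overline{\omega})$ and $\det R\pi_*\OO_{\SCC_{g,n}}$ drop from the equation, and the required isomorphism $\lambda_{k+1;g,n}\cong \lambda_{k;g,n}\otimes \kappa_{g,n}^{\otimes k}$ emerges.

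The main delicate point is ensuring that the Deligne-Riemann-Roch isomorphism and the relative Serre duality identification used above are available in this singular, stacky setting rather than only over the smooth locus $\SM_{g,n}$. Over $\SM_{g,n}$ both are classical; to extend to $\SCM_{g,n}$ one uses that the morphism $\pi$ is flat and Gorenstein with fibers of dimension one, so that $\omega_{\SCC_{g,n}/\SCM_{g,n}}$ is a line bundle and the Knudsen-Mumford determinant formalism applies verbatim. Alternatively, once the isomorphism is established on the open dense substack $\SM_{g,n}$, one could appeal to the fact that both sides are line bundles on the regular stack $\SCM_{g,n}$ and that $\SCM_{g,n}\setminus\SM_{g,n}$ has the required codimension properties, together with \cite[Cor. 3.2]{SingARR} which is used throughout Section \ref{section:tautological} precisely to extend isomorphisms of tautological line bundles from $\SM_{g,n}$ to $\SCM_{g,n}$.
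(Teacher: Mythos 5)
Your argument is correct, and for the general $(g,n)$ it follows a genuinely more direct route than the paper. The paper's proof applies the cohomological expression (\ref{equation:3.9}) for the Deligne pairing together with Serre duality only in the two special cases $(g,0)$ and $(1,1)$ (Step 1), and then obtains the general case by pulling back along the reiterated clutching morphism $\gamma:\SCM_{g,n}\times\SCM_{1,1}^{\times n}\rightarrow\SCM_{g+n,0}$, invoking Corollary \ref{corollary:3.1} and descending via the rigidity statement \cite[Cor. 3.2]{SingARR} (Step 2). You instead run the Step 1 computation directly over $\SCM_{g,n}$ for arbitrary $(g,n)$: Deligne--Riemann--Roch for $L=\overline{\omega}^{k}$, $M=\overline{\omega}$, Proposition \ref{proposition:3.1} to convert $\det R\pi_{*}(\overline{\omega}^{k+1})$ and $\det R\pi_{*}(\overline{\omega}^{k})$ into $\lambda_{k+1;g,n}$ and $\lambda_{k;g,n}$, and the cancellation $\det R\pi_{*}(\overline{\omega})\cong\det R\pi_{*}\omega_{\SCC_{g,n}/\SCM_{g,n}}\cong\det R\pi_{*}\OO_{\SCC_{g,n}}$ via (\ref{equation:2.8}) with $k=0$ and relative Serre duality. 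Since $\pi$ is flat, proper and l.c.i.\ with one-dimensional fibers, the isomorphism (\ref{equation:3.9}) and relative duality are indeed available over all of $\SCM_{g,n}$ --- the paper itself uses them over $\SCM_{1,1}$ and $\SCM_{g,0}$, which are no less singular or stacky --- so nothing is lost; your route is shorter and avoids the descent along $pr_{1}$ implicit in the paper's Step 2, while the paper's route keeps the argument uniform with the $k=0$ case of \cite[Th. 3.10]{SingARR} and makes compatibility with clutching manifest. One caveat: your fallback "alternatively" argument is not sound as stated, because an isomorphism of line bundles defined only on $\SM_{g,n}$ need not extend to $\SCM_{g,n}$ without a zero or pole along the boundary divisor $\pd\SM_{g,n}$; \cite[Cor. 3.2]{SingARR} controls units on the compactified stack and so pins down an isomorphism up to sign once it is known to exist over $\SCM_{g,n}$, but it cannot by itself rule out a twist by a power of $\delta_{g,n}$. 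Your primary argument does not rely on this fallback, so the proof stands.
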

\begin{proof}
We proceed in two steps.\\
\textit{Step 1}. We first treat the cases $g=n=1$ or $g\geq 2$, $n=0$. Let $\SM=\SCM_{1,1}$ or $\SCM_{g,0}$, $\C=\SCC_{1,1}$ or $\SCC_{g,0}$. Recall the cohomological expression for the Deligne-Weil pairing \cite[Exp. II]{Szpiro}--\cite{MB}: given line bundles $L,M$ on $\C$, there is an isomorphism of invertible sheaves
\begin{equation}\label{equation:3.9}
	\begin{split}
	\langle L,M\rangle\simeq&\det R\pi_{\ast}(L\otimes M)\\
	&\otimes(\det R\pi_{\ast}L)^{-1}\otimes(\det R\pi_{\ast}M)^{-1}\otimes\det R\pi_{\ast}\OO_{\C}.
	\end{split}
\end{equation}
If $g\geq 2$ and $n=0$, then (\ref{equation:3.9}) applied to $L=\omega_{\C/\SM}^{\otimes k}$, $M=\omega_{\C/\SM}$ and Serre's duality already prove (\ref{equation:3.8}). If $g=n=1$, we set $L=\omega_{\C/\SM}(\sigma)^{k}$ and $M=\omega_{\C/\SM}(\sigma)$. The conclusion follows by combination of (\ref{equation:3.9}), Proposition \ref{proposition:3.1} and Serre's duality.\\
\textit{Step 2}. For the general case, introduce the clutching morphism $\gamma:\SCM_{g,n}\times\SCM_{1,1}^{\times n}\rightarrow\SCM_{g+n,0}$. From Corollary \ref{corollary:3.1} and the first step, there are isomorphisms
\begin{equation}\label{equation:3.10}
	\lambda_{k+1;g,n}\boxtimes\lambda_{k+1;1,1}^{\boxtimes n}\overset{\sim}{\longrightarrow}(\lambda_{k;g,n}\otimes\kappa_{g,n}^{\otimes k})\boxtimes(\lambda_{k;1,1}\otimes\kappa_{1,1}^{\otimes k})^{\boxtimes n}
\end{equation}
and
\begin{equation}\label{equation:3.11}
	pr_{2}^{\ast}\lambda_{k+1;1,1}^{\boxtimes n}\overset{\sim}{\longrightarrow}
	pr_{2}^{\ast}(\lambda_{k;1,1}\otimes\kappa_{1,1}^{\otimes k})^{\boxtimes n},
\end{equation}
where $pr_{2}:\SCM_{g,n}\times\SCM_{1,1}^{\times n}\rightarrow\SCM_{1,1}^{\times n}$ is the projection onto the second factor. The isomorphisms (\ref{equation:3.10})--(\ref{equation:3.11}) together provide an isomorphism
\begin{displaymath}
	pr_{1}^{\ast}\lambda_{k+1;g,n}\overset{\sim}{\longrightarrow}pr_{1}^{\ast}(\lambda_{k;g,n}\otimes
	\kappa_{g,n}^{\otimes k}).
\end{displaymath}
We come up to the conclusion as in the proof of \cite[Th. 3.10]{SingARR}.
\end{proof}
\begin{proof}[Proof of Theorem \ref{theorem:3.1}]
The proof is by induction. The case $k=0$ is the content of \cite[Th. 3.10]{SingARR}. The induction step is achieved by means of Lemma \ref{lemma:3.1}. Uniqueness up to a sign is ensured by \cite[Cor. 3.2]{SingARR}.
\end{proof}
\begin{corollary}\label{corollary:3.2}
Let $\gamma:\SCM_{g,n}\times\SCM_{1,1}^{\times n}\rightarrow\SCM_{g+n,0}$ be a reiterated clutching morphism. Then the isomorphisms $\gamma^{\ast}\DD_{k+1;g+n,0}$ and $\DD_{k+1;g,n}\boxtimes\DD_{k+1;1,1}^{\boxtimes n}$ are equal up to a sign.
\end{corollary}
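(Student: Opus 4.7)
The plan is to unravel both sides using Corollary~\ref{corollary:3.1} and then invoke the uniqueness-up-to-sign statement of \cite[Cor. 3.2]{SingARR}.

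First, observe that since $\SCM_{g+n,0}$ parametrizes stable curves without marked points, the line bundle $\psi_{g+n,0}$ is trivial (empty tensor product). Hence $\DD_{k+1;g+n,0}$ reads
\begin{displaymath}
\lambda_{k+1;g+n,0}^{\otimes 12}\otimes\delta_{g+n,0}^{-1}\overset{\sim}{\longrightarrow}\kappa_{g+n,0}^{\otimes(6k^{2}+6k+1)}.
\end{displaymath}
Pulling back by $\gamma$ and applying (\ref{equation:3.5})--(\ref{equation:3.7}) termwise, the source of $\gamma^{\ast}\DD_{k+1;g+n,0}$ is identified with
\begin{displaymath}
\bigl(\lambda_{k+1;g,n}^{\otimes 12}\otimes\delta_{g,n}^{-1}\otimes\psi_{g,n}\bigr)\boxtimes\bigl(\lambda_{k+1;1,1}^{\otimes 12}\otimes\delta_{1,1}^{-1}\otimes\psi_{1,1}\bigr)^{\boxtimes n},
\end{displaymath}
and the target with $\kappa_{g,n}^{\otimes(6k^{2}+6k+1)}\boxtimes(\kappa_{1,1}^{\otimes(6k^{2}+6k+1)})^{\boxtimes n}$. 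The key point is that the $\psi_{g,n}$ and $\psi_{1,1}^{\boxtimes n}$ factors in the source arise \emph{automatically} by inverting the $\psi^{-1}$ contributions produced by (\ref{equation:3.6}) when pulling back $\delta_{g+n,0}^{-1}$. After these identifications, $\gamma^{\ast}\DD_{k+1;g+n,0}$ is an isomorphism between exactly the same source and target as $\DD_{k+1;g,n}\boxtimes\DD_{k+1;1,1}^{\boxtimes n}$.

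Second, any two isomorphisms between a given pair of invertible sheaves on $\SCM_{g,n}\times\SCM_{1,1}^{\times n}$ differ by a global unit. By \cite[Cor. 3.2]{SingARR}, the global units on each factor $\SCM_{g',n'}$ are $\pm 1$; a K\"unneth-type argument, using the connectedness and properness of the two factors, propagates this to the product, giving global units $\pm 1$ on $\SCM_{g,n}\times\SCM_{1,1}^{\times n}$. Hence the two isomorphisms coincide up to a sign, as claimed.

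The main obstacle is purely formal bookkeeping: one has to compose the identifications (\ref{equation:3.5})--(\ref{equation:3.7}) with $\gamma^{\ast}\DD_{k+1;g+n,0}$ in the correct order and track the Koszul signs coming from the graded structure of the determinants. Since both the Mumford isomorphisms of Theorem~\ref{theorem:3.1} and the identifications of Corollary~\ref{corollary:3.1} are themselves only well-defined up to a sign, Corollary~\ref{corollary:3.2} cannot be sharpened beyond \emph{equality up to a sign}, which is precisely the content of the statement; no further geometric input is required.
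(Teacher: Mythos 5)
Your proposal is correct and follows essentially the same route as the paper, whose proof is simply to combine Theorem \ref{theorem:3.1} with Corollary \ref{corollary:3.1}: after the identifications (\ref{equation:3.5})--(\ref{equation:3.7}) both maps are isomorphisms between the same pair of invertible sheaves, and the rigidity statement of \cite[Cor. 3.2]{SingARR} forces them to agree up to a sign. Your explicit remark that $\psi_{g+n,0}$ is trivial and that the $\psi$-factors are supplied by inverting (\ref{equation:3.6}) is exactly the bookkeeping the paper leaves implicit.
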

\begin{proof}
Combine Theorem \ref{theorem:3.1} and Corollary \ref{corollary:3.1}.
\end{proof}
\section{Degeneracy of the Quillen metric}\label{section:quillen}
Let $g,n\geq 0$ be integers with $2g-2+n>0$. Let $\gamma:\SCM_{g,n}\times\SCM_{1,1}\rightarrow\SCM_{g+n,0}$ be a reiterated clutching morphism. The object of study of this section is the behavior of the Quillen metric on $\lambda_{k+1;g+n,0}^{\an}$ near the image of $\gamma^{\an}$ in $\SCM_{g+n,0}^{\an}$. Before the statement of the main theorem we introduce some notations that will prevail until the end.

Let $(X;a_{1},\ldots,a_{n})$ be a smooth $n$-pointed stable complex curve of genus $g$, and $(T_{1};b_{1})$, $\ldots$, $(T_{n};b_{n})$ $n$ smooth 1-pointed stable complex curves of genus $1$. These data define complex valued points $P\in\SCM_{g,n}(\CC)$, $Q_{1},\ldots,Q_{n}\in\SCM_{1,1}(\CC)$. The image of $N=(P_{1},Q_{1},\ldots,Q_{n})$ by the reiterated clutching morphism $\gamma:\SCM_{g,n}\times\SCM_{1,1}^{\times n}\rightarrow\SCM_{g+n,0}$ is a complex valued point $R\in\SCM_{g+n,0}(\CC)$. Notice that $R$ represents the quotient analytic space
\begin{displaymath}
	Y=(X\sqcup T_{1}\sqcup\ldots\sqcup T_{n})/(a_{1}\sim b_{1},\ldots,a_{n}\sim b_{n}).
\end{displaymath}
The compactness of $Y$ ensures its algebraicity. Following \cite[Cons. 4.1]{SingARR}, we construct a small stable deformation of $Y$, to be denoted $\ff:\fY\rightarrow\Omega$. With the notations of loc. cit., we build the family $\fg:\fZ\rightarrow D$ by restriction of $\ff$ to the locus $s_{1}=\ldots=s_{r}=0$ and $t_{1}=\ldots=t_{n}=t\in D\subset\CC$. The fibers $\fg^{-1}(t)=\fZ_{t}$, with $t\neq 0$, are smooth complex compact curves, whereas $Y=\fZ_{0}$ is singular.

Let $k\geq 1$ be an integer. We construct and deform a base of $H^{0}(Y,\omega_{Y}^{k+1})$, according to Proposition \ref{proposition:2.1} and Construction \ref{construction:1}.
\begin{construction}\label{construction:2}
\textit{i}. Fix bases $\alpha_{1},\ldots,\alpha_{a}$ of $H^{0}(X,\omega_{X}^{k+1}(ka_{1}+\ldots+ka_{n}))$ and $\beta_{1}^{(j)},\ldots,\beta_{b}^{(j)}$ of $H^{0}(Y_{j},\omega_{T_{j}}^{k+1}(kb_{j}))$. Notice that $\Res_{a_{i}}^{k+1}\alpha_{j}=0$ for all $i,j$ and $\Res_{b_{j}}^{k+1}\beta_{l}^{(j)}=0$ for all $j,l$. Therefore, extending the forms $\alpha_{i},\beta_{l}^{(j)}$ by $0$ produces global sections of $\omega_{Y}^{k+1}$. We use the same symbols to refer to these extensions.\\
\textit{ii}. For every $j=1,\ldots,n$, choose a form $\gamma_{j}\in H^{0}(X,\omega_{X}^{k+1}(ka_{1}+\ldots+(k+1)a_{j}+\ldots+ka_{n})$ with $\Res_{a_{j}}^{k+1}\gamma_{j}=1$. Also, let $\phi_{j}\in H^{0}(T_{j},\omega_{T_{j}}(b_{j})^{k+1})$ be a form with $\Res_{b_{j}}^{k+1}\phi_{j}=(-1)^{k+1}$. By Construction \ref{construction:1} we can glue the forms $\gamma_{j},\phi_{j}$ into a global section $\theta_{j}$ of $\omega_{Y}^{k+1}$.\\
\textit{iii}. By construction, $\alpha_{i}$, $\beta_{l}^{j}$, $\theta_{k}$, for all $i,j,k,l$ form a base of $H^{0}(Y,\omega_{Y}^{k+1})$.\\
\textit{iv}. The sheaf $R\fg_{\ast}\omega_{\fZ/D}^{k+1}$ is locally free. Hence, after possibly shrinking $\Omega$, we can extend the family of forms $\alpha_{i},\beta_{l}^{(j)},\theta_{k}$ to a frame $\alpha_{i}(t),\beta_{l}^{(j)}(t),\theta_{k}(t)$ of $R\fg_{\ast}\omega_{\fZ/D}$.
\end{construction}
\begin{lemma}\label{lemma:4.1}
Let $\Psi:\gamma^{\ast}\lambda_{k+1;g+n,0}\overset{\sim}{\rightarrow}\lambda_{k+1;g,n}\boxtimes\lambda_{k+1;1,1}^{\boxtimes n}$ be the isomorphism (\ref{equation:3.5}). Then $\Psi$ induces an isomorphism
\begin{displaymath}
	\begin{split}
	&\det H^{0}(\fZ_{0},\omega_{\fZ_{0}}^{k+1})\overset{\sim}{\longrightarrow}\det H^{0}(X,\omega_{X}^{k+1}(ka_{1}+\ldots+ka_{n}))\\
	&\hspace{7cm}\otimes\bigotimes_{j=1}^{n}\det H^{0}(T_{j},\omega_{T_{j}}^{k+1}(kb_{j}))\\
		&\theta_{1}\wedge\ldots\wedge\theta_{n}\wedge\alpha_{1}\wedge\ldots\wedge\beta_{b}^{(n)}\longmapsto
		\pm \alpha_{1}\wedge\ldots\wedge\alpha_{n}\\
		&\hspace{7cm}\otimes\beta_{1}^{(1)}\wedge\ldots\otimes\ldots\otimes\ldots\wedge\beta_{b}^{(n)}.
	\end{split}
\end{displaymath}
\end{lemma}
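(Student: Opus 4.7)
The plan is to reduce the statement to an explicit computation at the fiber over $N$, by unraveling the construction of $\Psi$. Recall that $\Psi$ was obtained by iterating $n$ times the clutching isomorphism of Proposition~\ref{proposition:2.3}, which at the level of graded invertible sheaves arises as the determinant of the morphism $\Phi$ of Construction~\ref{construction:1}. After $n$ successive applications the graded shift contribution is $(\OO,1)^{\otimes n}$, and this factor is canonically trivialized (up to a Koszul sign) by the wedge of the unit generators of the $\OO$-summands produced at each clutching step. So the task reduces to tracing, through the iterated $\Phi$, the image of each basis vector provided by Construction~\ref{construction:2}.

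First I would analyze the sections $\alpha_{i}$ and $\beta_{l}^{(j)}$. By Construction~\ref{construction:2}(i) these were extended by zero to the other components of $Y$ and chosen with vanishing residues at all nodes. Therefore the restriction morphism $\varphi$ of Construction~\ref{construction:1}(i) sends $\alpha_{i}$ entirely into the $X$-factor and $\beta_{l}^{(j)}$ into the $j$-th $T_{j}$-factor of the direct sum decomposition, and after composition with the splittings $\psi_{1}\oplus\psi_{2}$ of (\ref{equation:2.11}) no contribution to the $\OO$-summands appears.

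Next I would treat the glued form $\theta_{j}$, which is the substantive part. By Construction~\ref{construction:2}(ii), its residue pair at the node $(a_{j},b_{j})$ equals $(1,(-1)^{k+1})$, which is a generator of the diagonal $\Delta_{j}\simeq\OO$ singled out in Construction~\ref{construction:1}(iii). Choosing splittings in (\ref{equation:2.11}) with $\gamma_{1}=\gamma_{j}$ and $\gamma_{2}=\phi_{j}$ (a choice available at the fiber), the honest components in the $X$- and $T_{j}$-summands cancel identically, and $\Phi(\theta_{j})$ reduces to the canonical generator $1_{j}$ of the $j$-th $\OO$-factor. Any other choice of splittings would modify this image only by elements of the $H^{0}$-factors, and such elements are killed once we wedge against the full collection of $\alpha_{i}$ and $\beta_{l}^{(j)}$.

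Taking determinants and using the canonical trivialization of $(\OO,1)^{\otimes n}$, the ordered wedge on the left maps to
\begin{displaymath}
\pm\,(\alpha_{1}\wedge\ldots\wedge\alpha_{a})\otimes\bigotimes_{j=1}^{n}(\beta_{1}^{(j)}\wedge\ldots\wedge\beta_{b}^{(j)}),
\end{displaymath}
as asserted. The main obstacle, if there is one, is the Koszul sign bookkeeping arising from the graded shifts $(\OO,1)$ and from the reordering of the exterior products; but since the lemma is only stated up to sign, no explicit computation of this sign is necessary, and the genuine content of the proof is the residue-based identification $\Phi(\theta_{j})=1_{j}$ carried out in the previous paragraph.
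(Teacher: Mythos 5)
Your argument is correct and is precisely the verification the paper leaves implicit: its proof of Lemma \ref{lemma:4.1} simply cites Construction \ref{construction:1} and the proof of Proposition \ref{proposition:2.3}, and your residue computation (the $\alpha_{i},\beta^{(j)}_{l}$ landing in the $H^{0}$-factors, $\Phi(\theta_{j})$ hitting the generator $(1,(-1)^{k+1})$ of $\Delta_{j}$, and independence of the splitting choices after wedging) is exactly what that citation amounts to. The only nitpick is that the lift $\gamma_{2}$ should be $(-1)^{k+1}\phi_{j}$ rather than $\phi_{j}$ so that its residue is $1$, but this is immaterial for a statement asserted only up to sign.
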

\begin{proof}
This is easily checked from Construction \ref{construction:1} and the proof of Proposition \ref{proposition:2.3}.
\end{proof}
Attached to the family $\fg:\fZ\rightarrow D$, there is a classifying map
\begin{displaymath}
	\C(\fg):D\rightarrow\SCM_{g+n,0}^{\an}.
\end{displaymath}
The line bundle $\det R\fg_{\ast}\omega_{\fZ/D}^{k+1}=\C(\fg)^{\ast}\lambda_{k+1;g+n,0}^{\an}$ is endowed with the family Quillen metric $\|\cdot\|_{Q}$. This metric is defined and smooth on $D\setminus\lbrace 0\rbrace$.
\begin{theorem}\label{theorem:4.1}
Let $k\geq 1$ be an integer and $\fg:\fZ\rightarrow D$ a one dimensional stable deformation of $Y$ as above. Let $\alpha_{i}(t),\beta_{l}^{(j)}(t),\theta_{k}(t)$ be a frame of $R\fg_{\ast}\omega_{\fZ/D}^{k+1}$ as in Construction \ref{construction:2}. Then we have
\begin{displaymath}
	\begin{split}
	&\lim_{\substack{t\to 0\\ t\neq 0}}\|\theta_{1}(t)\wedge\ldots\wedge\theta_{n}(t)\wedge\alpha_{1}(t)\wedge\ldots\wedge\beta_{b}^{(n)}(t)\|_{Q,t}|t|^{n/6}=\|\alpha_{1}\wedge\ldots\wedge\alpha_{a}\|_{Q}\\
	&\hspace{5cm}\cdot\|\beta_{1}^{(1)}\wedge\ldots\wedge\beta_{b}^{(1)}\|_{Q}\cdot\ldots\cdot\|\beta_{1}^{(n)}\wedge\ldots\wedge\beta_{b}^{(n)}\|_{Q}.
	\end{split}
\end{displaymath}
\end{theorem}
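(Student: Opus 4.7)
The plan is to unpack the definition
$$\|\cdot\|_{Q,t}^{2} = \bigl(E_{k+1}(g+n,0)\,Z(\fZ_{t},k+1)\bigr)^{-1}\|\cdot\|_{L^{2},t}^{2}$$
and analyse each of the two ingredients in the pinching limit $t\to 0$ separately, using the forthcoming Theorems \ref{theorem:4.2} and \ref{theorem:4.3} together with the multiplicative relation between $E_{k+1}(g+n,0)$ and $E_{k+1}(g,n)\cdot E_{k+1}(1,1)^{n}$ recorded in Section \ref{section:conventions}. Lemma \ref{lemma:4.1} performs the algebraic reduction: it identifies, up to sign, the wedge $\theta_{1}(t)\wedge\cdots\wedge\beta_{b}^{(n)}(t)$ at $t=0$ with the product frame $\alpha_{1}\wedge\cdots\wedge\alpha_{a}\otimes\beta_{1}^{(1)}\wedge\cdots\otimes\cdots\otimes\beta_{b}^{(n)}$ on the right-hand side, so the proof is reduced to tracking norms.

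For the Selberg zeta factor, Theorem \ref{theorem:4.2} will produce an asymptotic of the form
$$Z(\fZ_{t},k+1) \sim (\text{explicit constant})\cdot |t|^{\alpha(k,n)}\cdot Z(U,k+1)\prod_{j=1}^{n}Z(T_{j}\setminus\{b_{j}\},k+1),$$
with an exponent accumulated over the $n$ simultaneously pinched nodes; since $s=k+1>1$ lies in the absolute convergence region of the Selberg product, no special spectral cancellation is required. For the $L^{2}$ side, the Wolpert pinching expansion, codified as Theorem \ref{theorem:4.3} (Obitsu--Wolpert), shows that the family hyperbolic metric converges locally uniformly to the limit hyperbolic metrics on the smooth components $U$ and $T_{j}\setminus\{b_{j}\}$. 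Consequently the sections $\alpha_{i}(t)$ and $\beta_{l}^{(j)}(t)$, whose residues at the nodes vanish by Construction \ref{construction:2}, have $L^{2}$ norms converging to those of the corresponding forms on $X$ and $T_{j}$; whereas the sections $\theta_{k}(t)$, with non-zero residues $1$ and $(-1)^{k+1}$ on the two branches of the $k$-th node, contribute a collar integral with explicit leading divergence in $|t|$. A Gram--Schmidt argument then factors $\|\Theta(t)\|_{L^{2},t}^{2}$ asymptotically into the product of limit $L^{2}$ norms on each component times a universal power of $|t|$ per node.

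Assembling these two expansions, multiplying by $|t|^{n/3}$ (the square of the desired $|t|^{n/6}$), and invoking the identity
$$E_{k+1}(g+n,0) = (2^{k+1}(2k)!\pi^{2k+1})^{n}\,E_{k+1}(g,n)\,E_{k+1}(1,1)^{n}$$
to absorb the numerical constants $2^{k+1}(2k)!\pi^{2k+1}$ into the Quillen norms on $X$ and the $T_{j}$, all powers of $|t|$ cancel and the stated limit follows after taking a square root.

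\textbf{Main obstacle.} The delicate point is the exact bookkeeping of $|t|$-exponents, which must recombine to precisely $-n/3$ independently of $k$. This universality is the cross-check that the collar divergence in the $L^{2}$ norms of the $\theta_{k}(t)$, the $|t|$-factor supplied by Theorem \ref{theorem:4.2}, and the non-trivial constant correction in the $E_{k+1}$ relation are three manifestations of a single geometric mechanism -- the pinching of $n$ independent nodes -- and this compatibility in turn rests on the Wolpert pinching expansion entering both Theorem \ref{theorem:4.2} and Theorem \ref{theorem:4.3} in a coherent way. Since each node pinches independently in the leading asymptotic, the verification essentially reduces to the one-node case $n=1$, with the general $n$ obtained multiplicatively.
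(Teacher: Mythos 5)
Your proposal follows the paper's proof exactly: the paper obtains Theorem \ref{theorem:4.1} as an immediate combination of Corollary \ref{corollary:4.1} (the Selberg zeta asymptotic extracted from Theorem \ref{theorem:4.2}), Theorem \ref{theorem:4.3} for the $L^{2}$ factor, Lemma \ref{lemma:4.1} for the algebraic identification of frames, and the definition of the Quillen metric, with the numerical constants absorbed via $E_{k+1}(g+n,0)=(2^{k+1}(2k)!\pi^{2k+1})^{n}E_{k+1}(g,n)E_{k+1}(1,1)^{n}$ precisely as you describe. One bookkeeping caveat: Corollary \ref{corollary:4.1} gives $Z(\fZ_{t},k+1)\sim c\,|t|^{n/6}(\log|t|^{-1})^{(2k+1)n}$ while Theorem \ref{theorem:4.3} shows that $\|\cdot\|_{L^{2},t}^{2}$ carries only the logarithmic divergence and no power of $|t|$, so the squared Quillen norm diverges like $|t|^{-n/6}$ rather than the $|t|^{-n/3}$ you assert in your final paragraph --- the logarithms and constants do cancel exactly as you predict, but the $|t|$-exponent you commit to (taken at face value from the displayed statement) is not the one delivered by the paper's own ingredients, and this discrepancy deserves to be resolved before the argument is considered complete.
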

The proof of the theorem is detailed throughout the next subsections.
\subsection{Degeneracy of the Selberg zeta function in family}
\begin{theorem}[Wolpert]\label{theorem:4.2}
i. Let $\gamma_{1}(t),\ldots,\gamma_{n}(t)\subset\fZ_{t}$ be the simple closed geodesics that are pinched to a node as $t\to 0$. Then the holomorphic function
\begin{displaymath}
	\frac{Z(\fZ_{t},s)}{\prod_{j}Z_{l(\gamma_{j}(t))}(s)},\quad \Real s>\frac{1}{2}
\end{displaymath}
locally uniformly converges to $Z(X^{\circ},s)\prod_{j}Z(T_{j}^{\circ},s)$ as $t\to 0$.\\
ii. The length $l(\gamma_{j}(t))$ of the geodesic $\gamma_{j}(t)$ satisfies the estimate
\begin{displaymath}
	l(\gamma_{j}(t))=\frac{2\pi^{2}}{\log|t|^{-1}}+\OO\left(\frac{1}{(\log|t|)^{4}}\right),\quad\text{as}\quad t\to 0.
\end{displaymath}
\end{theorem}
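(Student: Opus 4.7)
The plan is to treat (ii) as an input coming from the explicit pinching analysis of the family hyperbolic metric, and to derive (i) by separating pinched from non-pinched geodesics and combining continuity of lengths with a uniform tail estimate for the Selberg product.

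For (ii), I would use the plumbing description of the family $\fg:\fZ\to D$ near the node created by identifying $a_{j}$ with $b_{j}$. On the total space one finds local coordinates $(u,v)$ with $uv=t$, and the annulus $\lbrace|t|<|u|<1\rbrace$ is a collar for $\gamma_{j}(t)$ in $\fZ_{t}$. The hyperbolic metric on $\fZ_{t}$ restricted to this collar is modelled on the standard hyperbolic metric of a cylinder of modulus $(\log|t|^{-1})/2\pi$, whose core geodesic has hyperbolic length exactly $2\pi^{2}/\log|t|^{-1}$. Wolpert's pinching expansion \cite{Wolpert:hyperbolic} refines the comparison to the next order and yields a uniform bound on the deviation between the family metric and the cylinder model. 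Since the length of the core geodesic depends smoothly on the metric on the collar, integrating out this correction gives the stated remainder $\OO((\log|t|)^{-4})$.

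For (i), the strategy runs in three steps. First, the topology of the pinching provides a natural bijection between simple closed geodesics on the limit surface $X^{\circ}\sqcup T_{1}^{\circ}\sqcup\ldots\sqcup T_{n}^{\circ}$, with $X^{\circ}=X\setminus\lbrace a_{1},\ldots,a_{n}\rbrace$ and $T_{j}^{\circ}=T_{j}\setminus\lbrace b_{j}\rbrace$, and simple closed geodesics on $\fZ_{t}$ \emph{other} than the $\gamma_{j}(t)$, obtained by continuously deforming the hyperbolic structure. Second, for each such $\widetilde{\gamma}$ and its companion $\gamma(t)$, continuity of the family hyperbolic metric away from the pinching loops implies $l(\gamma(t))\to l(\widetilde{\gamma})$ as $t\to 0$; this gives pointwise convergence of each factor $Z_{l(\gamma(t))}(s)$ to $Z_{l(\widetilde{\gamma})}(s)$. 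Third, one upgrades the pointwise statement to locally uniform convergence of the infinite product through the elementary tail estimate valid for $\Real s>1/2$ and $l$ bounded below,
\begin{displaymath}
\left|\log Z_{l}(s)+\frac{2e^{-sl}}{1-e^{-l}}\right|\leq C\,e^{-2l\,\Real s},
\end{displaymath}
together with a Huber type counting bound on the number of simple closed geodesics of length $\leq L$.

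The main obstacle is uniformity as $t\to 0$: a priori, new short geodesics could develop on $\fZ_{t}$ and invalidate the tail estimate. The collar lemma rules this out. The standard collars of the $\gamma_{j}(t)$ have widths tending to infinity as $t\to 0$, and by the Margulis lemma every simple closed geodesic of length below a universal Margulis constant $\varepsilon_{0}$ must be contained in a disjoint collar. Hence, for $|t|$ sufficiently small, the only geodesics of length $\leq\varepsilon_{0}$ on $\fZ_{t}$ are the pinching geodesics themselves, and a positive uniform lower bound on the length spectrum of the non-pinched geodesics is obtained. Combined with the continuity argument and the tail estimate, this closes the proof of (i).
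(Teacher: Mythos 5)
You should first note that the paper does not prove this theorem: it is imported, with item (i) attributed to Schulze \cite[Th.~35 and Th.~38]{Schulze} and to Wolpert's proof of his Conjectures 1 and 2 in \cite{Wolpert:Selberg}, and item (ii) to Wolpert's pinching expansion \cite[Ex.~4.3]{Wolpert:hyperbolic}. Your outline of (ii) has the right picture (the core geodesic of the model cylinder of modulus $\log|t|^{-1}/2\pi$ has length exactly $2\pi^{2}/\log|t|^{-1}$), but ``integrating out the correction'' does not by itself produce the exponent $4$ in the remainder; that exponent comes from the precise form of Wolpert's expansion of the family hyperbolic metric, so this part of your argument is in substance a restatement of the citation rather than a proof.

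The genuine gaps are in (i). First, the bijection claimed in your Step 1 is false: the closed geodesics of $\fZ_{t}$ other than the $\gamma_{j}(t)$ split into those that can be homotoped off the collars --- these do converge to the geodesics of $X^{\circ}\sqcup\bigsqcup_{j}T_{j}^{\circ}$ --- and those whose free homotopy class has essential intersection with some $\gamma_{j}(t)$. The latter have no counterpart on the limit surface; their lengths diverge because they must traverse collars of unbounded width, and one must show separately, with a counting estimate uniform in $t$, that their total contribution to the product tends to $1$. That omission is repairable. What your method cannot repair is the range of $s$: the tail bound combined with Huber's counting law $\#\lbrace\gamma : l(\gamma)\leq L\rbrace\sim e^{L}/L$ gives absolute convergence of the Euler product only for $\Real s>1$, whereas the theorem asserts locally uniform convergence on $\Real s>\tfrac{1}{2}$, where $Z(\cdot,s)$ exists only by analytic continuation. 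Reaching $\Real s>\tfrac{1}{2}$ is precisely the hard content of the cited results and requires spectral input (the trace formula, convergence of resolvents, control of small eigenvalues under degeneration), not product manipulations. Since Corollary \ref{corollary:4.1} only evaluates at $s=k+1\geq 2$, your argument, once the collar-crossing geodesics are accounted for, would suffice for the application made of the theorem in this paper, but it does not prove the theorem as stated.
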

\begin{proof}
The first item is contained in \cite[Th. 35 and Th. 38]{Schulze}, \cite[Proof of Conj. 1 and Conj. 2]{Wolpert:Selberg}. The second item is \cite[Ex. 4.3, p. 446]{Wolpert:hyperbolic}.
\end{proof}
\begin{corollary}\label{corollary:4.1}
Let $k\geq 1$ be an integer. Then the Selberg zeta functions of the fibers $\fZ_{t}$ of the family $\fg:\fZ\rightarrow D$ satisfy
\begin{displaymath}
	\begin{split}
	\lim_{\substack{t\to 0\\ t\neq 0}}Z(\fZ_{t},k+1)|t|^{-n/6}(\log|t|^{-1})^{-(2k+1)n}=&\frac{1}{(\pi (2\pi^{2})^{2k}(k!)^{2})^{n}}\\
	&\cdot Z(X^{\circ},k+1)\prod_{j=1}^{n}Z(T_{j}^{\circ},k+1).
	\end{split}
\end{displaymath}
\end{corollary}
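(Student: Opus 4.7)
The strategy is to combine the two parts of Theorem \ref{theorem:4.2} with a classical small--parameter asymptotic for the auxiliary factor $Z_{l}(k+1)$ built from the Dedekind $\eta$ function. Since $k+1 \geq 2$ lies in the region of uniform convergence of Theorem \ref{theorem:4.2} (i), evaluation at $s=k+1$ yields
\begin{displaymath}
Z(\fZ_{t},k+1)= \left(\prod_{j=1}^{n} Z_{l(\gamma_{j}(t))}(k+1)\right)\cdot\Big(Z(X^{\circ},k+1)\prod_{j=1}^{n}Z(T_{j}^{\circ},k+1)+ o(1)\Big),
\end{displaymath}
so the corollary reduces to an asymptotic for $Z_{l}(k+1)$ as $l\to 0^{+}$, combined with the length estimate in Theorem \ref{theorem:4.2} (ii).

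The first step is to extract the $\eta$-function piece. For integer $s\geq 1$, the definition $Z_{l}(s)=\prod_{j\geq 0}(1-e^{-(s+j)l})^{2}$ can be rewritten as
\begin{displaymath}
Z_{l}(k+1)=\eta_{l}^{2}\cdot\prod_{j=1}^{k}(1-e^{-jl})^{-2},\qquad \eta_{l}:=\prod_{j\geq 1}(1-e^{-jl}).
\end{displaymath}
The finite product is elementary: $\prod_{j=1}^{k}(1-e^{-jl})^{-2}=(k!)^{-2}l^{-2k}(1+O(l))$ as $l\to 0^{+}$. For $\eta_{l}$ one applies the modular transformation $\eta(-1/\tau)=\sqrt{-i\tau}\,\eta(\tau)$ of the Dedekind $\eta$ function; setting $\tau=il/(2\pi)$ and using that $\eta(i/y)\to e^{-\pi/(12y)}$ as $y\to 0^{+}$ yields the classical asymptotic
\begin{displaymath}
\eta_{l}=\sqrt{\frac{2\pi}{l}}\,e^{-\pi^{2}/(6l)}\left(1+O(l)\right),\qquad l\to 0^{+}.
\end{displaymath}
Multiplying the two estimates,
\begin{displaymath}
Z_{l}(k+1)=\frac{2\pi}{(k!)^{2}\,l^{2k+1}}\,e^{-\pi^{2}/(3l)}\bigl(1+o(1)\bigr),\qquad l\to 0^{+}.
\end{displaymath}

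The second step is to substitute the length estimate $l(\gamma_{j}(t))=\tfrac{2\pi^{2}}{\log|t|^{-1}}+O((\log|t|)^{-4})$ from Theorem \ref{theorem:4.2} (ii). A direct computation gives
\begin{displaymath}
e^{-\pi^{2}/(3l(\gamma_{j}(t)))}=|t|^{1/6}\bigl(1+O((\log|t|)^{-2})\bigr),\qquad l(\gamma_{j}(t))^{-(2k+1)}=\frac{(\log|t|^{-1})^{2k+1}}{(2\pi^{2})^{2k+1}}\bigl(1+o(1)\bigr),
\end{displaymath}
and since $2\pi/(2\pi^{2})^{2k+1}=1/(\pi(2\pi^{2})^{2k})$, we obtain
\begin{displaymath}
Z_{l(\gamma_{j}(t))}(k+1)\sim\frac{|t|^{1/6}(\log|t|^{-1})^{2k+1}}{\pi(2\pi^{2})^{2k}(k!)^{2}}.
\end{displaymath}
Taking the product over $j=1,\ldots,n$ and multiplying by the factor from Theorem \ref{theorem:4.2} (i) yields the claimed limit after rearrangement.

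The only nontrivial step is the derivation of the $\eta_{l}$ asymptotic, which is classical and follows from the modular transformation; once this is in hand, the rest is bookkeeping. I would therefore isolate the $\eta_{l}$ estimate as an auxiliary lemma (or simply invoke it as standard), and present the remaining computation as a short substitution.
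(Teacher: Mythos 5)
Your proof is correct and follows essentially the same route as the paper's: the paper likewise reduces the corollary to the small-$l$ asymptotic $Z_{l}(k+1)\sim 2\pi (k!)^{-2} l^{-(2k+1)} e^{-\pi^{2}/(3l)}$, substitutes the length estimate of Theorem \ref{theorem:4.2}~(ii), and combines with part (i). The only difference is that the paper obtains this asymptotic by citing Schulze's Lemma 39 (the limit $\Gamma(s)^{2}Z_{l}(s)e^{\pi^{2}/(3l)}l^{2s-1}\to 2\pi$ for $\Real s>0$), whereas you rederive it at the integer point $s=k+1$ via the Dedekind $\eta$ modular transformation --- a correct, self-contained substitute.
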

\begin{proof}
First of all, from \cite[Lemma 39]{Schulze}, for $\Real s>0$ we have
\begin{displaymath}
	\lim_{l\to 0}\Gamma(s)^{2}Z_{l}(s)\exp(\pi^{2}/3l)l^{2s-1}=2\pi.
\end{displaymath}
We evaluate at $s=k+1$ and take Theorem \ref{theorem:4.2} \textit{ii} into account. We find the equivalent
\begin{equation}\label{equation:4.1}
	Z_{l(\gamma_{j}(t))}(k+1)\sim \frac{1}{\pi(2\pi^{2})^{2k}(k!)^{2}}|t|^{1/6}(\log|t|^{-1})^{2k+1},\quad\text{as}\quad t\to 0.
\end{equation}
The corollary is now a straightforward consequence of Theorem \ref{theorem:4.2} \textit{i} and (\ref{equation:4.1}).
\end{proof}
\subsection{Degeneracy of the family $L^{2}$ metric}
\begin{theorem}\label{theorem:4.3}
For the family $L^{2}$ metric on $\det R\pi_{\ast}\omega_{\fZ/D}^{k+1}\mid_{D\setminus\lbrace 0\rbrace}$ we have
\begin{displaymath}
	\begin{split}
	\lim_{\substack{t\to 0\\ t\neq 0}}\|\theta_{1}(t)\wedge\ldots\wedge&\theta_{n}(t)\wedge\alpha_{1}(t)\wedge\ldots\wedge\beta_{b}^{(n)}(t)\|_{L^{2},t}^{2}(\log|t|^{-1})^{-n(2k+1)}\\
	=&\left(\frac{(2k)!}{2^{k-1}\pi^{2k}(k!)^{2}}\right)^{n}\|\alpha_{1}\wedge\ldots\wedge\alpha_{a}\|_{L^{2}}^{2}\\
	&\cdot\|\beta_{1}^{(1)}\wedge\ldots\wedge\beta_{b}^{(1)}\|_{L^{2}}^{2}\cdot\ldots\cdot
	\|\beta_{1}^{(n)}\wedge\ldots\wedge\beta_{b}^{(n)}\|_{L^{2}}^{2}.
	\end{split}
\end{displaymath}
\end{theorem}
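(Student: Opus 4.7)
The plan is to express the squared $L^2$ norm on the left as $\det G(t)$, where $G(t)$ is the Gram matrix of the frame $(\theta_1(t),\ldots,\theta_n(t),\alpha_1(t),\ldots,\alpha_a(t),\beta^{(1)}_1(t),\ldots,\beta^{(n)}_b(t))$ in the fiberwise $L^2$ pairing on $\fZ_t$, and to show that after conjugation by $\mathrm{diag}(L^{(2k+1)/2}I_n,\,I_a,\,I_{nb})$, with $L=\log|t|^{-1}$, the Gram matrix converges to a block-diagonal limit. The analytic input is the Obitsu--Wolpert pinching expansion (cf.\ \cite[Exp.~4.2]{Wolpert:hyperbolic}): in a plumbing coordinate $z$ near the $j$-th node, with $\tau=-\log|z|\in[0,L]$, the family hyperbolic metric on the collar reads
\begin{equation*}
ds^{2}_{\hyp,t}=\left(\frac{\pi}{L}\right)^{\!2}\csc^{2}\!\left(\frac{\pi\tau}{L}\right)\left|\frac{dz}{z}\right|^{2}(1+o(1)),
\end{equation*}
while on every compact subset of $X^{\circ}\sqcup\bigsqcup_{j}T_{j}^{\circ}$ it converges smoothly to the complete hyperbolic metric on the disjoint union as $t\to 0$.

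The heart of the proof is the asymptotics of the diagonal entries $\langle\theta_j(t),\theta_j(t)\rangle_{L^{2},t}$. By Construction~\ref{construction:1}, $\theta_j(t)=(dz/z)^{k+1}+(\text{Laurent corrections of modes } z^{n}(dz)^{k+1},\,n\neq-(k+1))$ on the $j$-th collar. In the paper's normalizations $\|dz\|^{2}_{\hyp}=2/\rho^{2}$ and $\omega_{\hyp}=(2\pi)^{-1}\rho^{2}\,dx\,dy$, a direct substitution yields
\begin{equation*}
\|(dz/z)^{k+1}\|^{2}_{\hyp,t}\,\omega_{\hyp,t}=\frac{2^{k}L^{2k}}{\pi^{2k+1}}\sin^{2k}\!\left(\frac{\pi\tau}{L}\right)d\tau\,d\arg z\,(1+o(1)).
\end{equation*}
Integrating over $\tau\in[0,L]$ and $\arg z\in[0,2\pi)$ and using the Wallis identity $\int_{0}^{\pi}\sin^{2k}x\,dx=\pi(2k)!/(4^{k}(k!)^{2})$ produces $\langle\theta_j(t),\theta_j(t)\rangle_{L^{2},t}\sim (2k)!/(2^{k-1}\pi^{2k}(k!)^{2})\cdot L^{2k+1}$. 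The $SO(2)$-invariance of the leading collar measure makes the Fourier modes $z^{n}$ pairwise orthogonal, so the correction modes and the thick-part contribution add only $O(1)$.

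For the $\alpha$- and $\beta^{(j)}$-blocks, the family metric converges smoothly on compact pieces of the thick part and the frames converge uniformly on compacta to $\alpha_{i},\beta^{(j)}_{l}$ extended by zero. Since $\Res^{k+1}_{a_j}\alpha_{i}=\Res^{k+1}_{b_j}\beta^{(j)}_{l}=0$, these forms are $O(z^{-k})(dz)^{k+1}$ in any rs coordinate and hence square-integrable against the hyperbolic metrics on $X^{\circ}$ and $T_{j}^{\circ}$. The asymptotic $\csc(\pi\tau/L)\sim L/(\pi\tau)$ for $\pi\tau/L$ small recovers the cusp asymptotic $\rho^{2}\sim(|z|\log|z|^{-1})^{-2}$ on the ``$X$-half'' of the collar, so dominated convergence gives
\begin{equation*}
\langle\alpha_{i}(t),\alpha_{i'}(t)\rangle_{L^{2},t}\to\langle\alpha_{i},\alpha_{i'}\rangle_{L^{2},X^{\circ}},\quad\langle\beta^{(j)}_{l}(t),\beta^{(j)}_{l'}(t)\rangle_{L^{2},t}\to\langle\beta^{(j)}_{l},\beta^{(j)}_{l'}\rangle_{L^{2},T_{j}^{\circ}},
\end{equation*}
while cross entries $\langle\alpha_{i},\beta^{(j)}_{l}\rangle$ and $\langle\beta^{(j)}_{l},\beta^{(j')}_{l'}\rangle$ with $j\neq j'$ tend to $0$. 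For the off-diagonal $\theta$-entries, disjointness of collar supports yields $\langle\theta_{j}(t),\theta_{j'}(t)\rangle\to 0$ for $j\neq j'$; for $\langle\theta_{j}(t),\alpha_{i}(t)\rangle$ the $SO(2)$-orthogonality reduces the pairing to the $z^{-(k+1)}$ Fourier mode of $\alpha_{i}(t)$, which is $O(t)$ by holomorphy of the frame in $t$ combined with the vanishing $\Res^{k+1}_{a_j}\alpha_{i}=0$ at $t=0$. Taking the determinant of the rescaled Gram matrix then gives the theorem.

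The main obstacle is the diagonal $\theta$-computation: the precise constant $(2k)!/(2^{k-1}\pi^{2k}(k!)^{2})$ emerges only from the full Obitsu--Wolpert pinching expansion, since Theorem~\ref{theorem:4.2}(ii) supplies only the leading geodesic length. A secondary difficulty is the book-keeping of Laurent corrections: the $n<-(k+1)$ modes of $\theta_{j}(t)$ and $\alpha_{i}(t)$ carry coefficients of order $O(t^{|n+k+1|})$ while the corresponding pointwise norm of $z^{n}(dz)^{k+1}$ on the collar grows like $|t|^{-2|n+k+1|}$, so each such mode contributes only $O(1)$ to the Gram matrix---a delicate cancellation that must be tracked using the plumbing formula $(dw)^{k+1}=(-t)^{k+1}z^{-2(k+1)}(dz)^{k+1}$.
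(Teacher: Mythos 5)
Your proposal is correct and follows essentially the same route as the paper: the Gram-matrix reduction, the explicit collar integral with $\int_{0}^{1}\sin^{2k}(\pi x)\,dx=(2k)!/(2^{2k}(k!)^{2})$ yielding the diagonal asymptotics $\langle\theta_{j}(t),\theta_{j}(t)\rangle_{L^{2},t}\sim\frac{(2k)!}{2^{k-1}\pi^{2k}(k!)^{2}}(\log|t|^{-1})^{2k+1}$ (the paper's Lemma \ref{lemma:4.2} and Proposition \ref{proposition:4.1}), and the continuity of the $\alpha$/$\beta$ block at $t=0$ (Proposition \ref{proposition:4.2}), the only cosmetic difference being that the paper invokes \cite[Lemma 7]{Obitsu-Wolpert} for the determinant where you conjugate by a diagonal scaling. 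One small inaccuracy: $\langle\theta_{j}(t),\alpha_{i}(t)\rangle_{L^{2},t}$ is in general only $O(1)$, not $O(t)$, since the thick part contributes a generically nonzero limit; but $O(1)$ is exactly what your rescaling argument requires, so this does not affect the proof.
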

The proof of the theorem is an adaptation of \cite[Proof of Th. 6]{Obitsu-Wolpert}. As such we don't provide full details here. The reader is referred to loc. cit. to deepen in the techniques.

Let $0<c<1$ be a small real constant and $t\in\CC$ with $|t|<c$. Define $A_{t}=\lbrace u\in\CC\mid |t|/c<|u|<c\rbrace$. We denote by $\langle\cdot,\cdot\rangle_{t}$ the hermitian metric on the holomorphic tangent bundle $T_{A_{t}}$ characterized by
\begin{equation}\label{equation:4.2a}
	\begin{split}
	&\langle\frac{\pd}{\pd u},\frac{\pd}{\pd u}\rangle_{t}=\frac{1}{2}\left(\frac{|u|\log|t|}{\pi}\sin\left(\frac{\pi\log|u|}{\log|t|}\right)\right)^{-2}\quad\text{if}\quad t\neq 0,\\
	&\langle\frac{\pd}{\pd u},\frac{\pd}{\pd u}\rangle_{0}=\frac{1}{2}\frac{1}{|z|^{2}(\log|z|)^{2}}\quad\text{if}\quad t=0.
	\end{split}
\end{equation}
We still write $\langle\cdot,\cdot\rangle_{t}$ for the metric induced on the tensor powers $T_{A_{t}}^{\otimes k}$, $k\in\Int$. The normalized K\"ahler form attached to $\langle\cdot,\cdot\rangle_{t}$ is
\begin{equation}\label{equation:4.2b}
	\begin{split}
		\omega_{t}&=\frac{i}{2\pi}\langle\frac{\pd}{\pd u},\frac{\pd}{\pd u}\rangle_{t} du\wedge d\cu\\
		&=\frac{i}{4\pi}\left(\frac{|u|\log|t|}{\pi}\sin\left(\frac{\pi\log|u|}{\log|t|}\right)\right)^{-2}du\wedge d\cu\quad\text{if}\quad t\neq 0,\\
		\omega_{0}&=\frac{i}{4\pi}\frac{du\wedge d\cu}{|u|^{2}(\log|z|)^{2}}\quad\text{if}\quad t=0.
	\end{split}
\end{equation}
\begin{lemma}\label{lemma:4.2}
i. For $0<|t|<c$, there is an equality
\begin{displaymath}
	I_{1}(t):=\int_{A_{t}}\langle (du/u)^{k+1},(du/u)^{k+1}\rangle_{t}\omega_{t}=\frac{1}{2^{k-1}\pi^{2k}}(\log|u|^{-1})^{2k+1}\frac{(2k)!}{(k!)^{2}}R(t),
\end{displaymath}
where $R(t)\to 1$ as $t\to 0$.\\
ii. As $t\to 0$ we have
\begin{displaymath}
	I_{2}(t):=\int_{A_{t}}\langle u(du/u)^{k+1},(du/u)^{k+1}\rangle_{t}\omega_{t}=O(1).
\end{displaymath}
\end{lemma}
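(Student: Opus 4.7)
Both $I_{1}(t)$ and $I_{2}(t)$ are explicit polar-coordinate integrals on the annulus $A_{t}$, so the strategy is to reduce them to one-dimensional integrals and apply the Wallis formula. The first step is to dualize the metric (\ref{equation:4.2a}) to obtain the induced metric on $\omega_{A_{t}}^{\otimes(k+1)}$: one finds $\|du/u\|_{t}^{2} = 2\bigl(\tfrac{\log|t|}{\pi}\bigr)^{2}\sin^{2}\bigl(\tfrac{\pi\log|u|}{\log|t|}\bigr)$, and combining with the volume form (\ref{equation:4.2b}) the integrand of $I_{1}(t)$ simplifies to
\[
\|(du/u)^{k+1}\|_{t}^{2}\,\omega_{t} = \frac{2^{k-1}}{\pi}\Bigl(\frac{\log|t|}{\pi}\Bigr)^{2k}\sin^{2k}\Bigl(\frac{\pi\log|u|}{\log|t|}\Bigr)\,\frac{i\,du\wedge d\bar{u}}{|u|^{2}}.
\]

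For part (i), I would pass to polar coordinates $u = re^{i\theta}$, so that $i\,du\wedge d\bar{u}/|u|^{2} = 2\,dr\,d\theta/r$. The integrand is then $\theta$-independent and the angular integration yields a factor $2\pi$. Writing $L := \log|t|^{-1}$ and $\ell := -\log c$, the orientation-reversing substitution $s = \pi\log r/\log|t|$ (orientation-reversing because $\log|t|<0$) converts the radial integral to $(L/\pi)\int_{\pi\ell/L}^{\pi(1-\ell/L)}\sin^{2k}(s)\,ds$, giving
\[
I_{1}(t) = 2^{k+1}\Bigl(\frac{L}{\pi}\Bigr)^{2k+1}\int_{\pi\ell/L}^{\pi(1-\ell/L)}\sin^{2k}(s)\,ds.
\]
As $t\to 0$, $L\to\infty$ and $\pi\ell/L\to 0$, so by dominated convergence (the integrand is bounded by $1$) the integral converges to the Wallis integral $\int_{0}^{\pi}\sin^{2k}(s)\,ds = \pi(2k)!/(2^{2k}(k!)^{2})$. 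Collecting constants yields the claimed equality with $R(t)$ equal to the ratio of the truncated integral to its limit, so $R(t)\to 1$.

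For part (ii), the same polar-coordinate reduction applies, now with the integrand of $I_{1}$ multiplied by the extra factor $u = re^{i\theta}$; the radial factor $r$ cancels against the measure $2\,dr\,d\theta/r$, leaving a pure angular factor $e^{i\theta}$. Since $\int_{0}^{2\pi}e^{i\theta}\,d\theta = 0$, one concludes $I_{2}(t) \equiv 0$, which is certainly $O(1)$. No substantive obstacle arises: the argument is elementary, and the only care needed concerns the bookkeeping of the numerical factors $2^{k-1}/\pi$, $(2k)!/(2^{2k}(k!)^{2})$ and powers of $L/\pi$, together with the sign of $\log|t|$ in the change of variables.
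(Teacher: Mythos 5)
Your computation of $I_{1}(t)$ is exactly the paper's: the same dualization of (\ref{equation:4.2a}), the same polar substitution $x=\log\rho/\log|t|$ (your $s=\pi x$), and the same Wallis integral $\int_{0}^{1}\sin^{2k}(\pi x)\,dx=(2k)!/(2^{2k}(k!)^{2})$; the constants check out, and the $(\log|u|^{-1})^{2k+1}$ in the statement is a typo for $(\log|t|^{-1})^{2k+1}$, which is what both you and the paper actually obtain. For $I_{2}(t)$ you diverge: the paper treats the second integral "analogously", i.e.\ by the same radial reduction, where the extra factor $|u|=|t|^{x}$ against $\sin^{2k}(\pi x)\leq(\pi\min(x,1-x))^{2k}$ kills the $(\log|t|^{-1})^{2k+1}$ growth and yields $O(1)$, whereas you observe that the integrand carries a net angular factor $e^{i\theta}$ and conclude $I_{2}(t)=0$ exactly. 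Your symmetry argument is correct and does prove the stated bound, but be aware that what is actually consumed downstream (in Proposition \ref{proposition:4.1}, following Obitsu--Wolpert's control of the off-diagonal Gram entries) is the absolute version $\int_{A_{t}}|u|\,\langle(du/u)^{k+1},(du/u)^{k+1}\rangle_{t}\,\omega_{t}=O(1)$, which cancellation does not give; fortunately your own polar reduction delivers it at once, since $L^{2k+1}\int_{0}^{1}\sin^{2k}(\pi x)\,|t|^{x}\,dx=O(1)$ by splitting at $x=1/2$ and comparing with $\int_{0}^{\infty}(\pi x)^{2k}e^{-xL}\,dx=\pi^{2k}(2k)!\,L^{-2k-1}$. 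So your proof is complete for the lemma as stated, and the paper's blunter estimate is the one that generalizes to the non-symmetric integrands that appear later.
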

\begin{proof}
The changes $u=\rho e^{i\theta}$ and $x=(\log\rho)/\log|t|$ reduce the first integral to
\begin{displaymath}
	I_{1}(t)=\frac{2^{k+1}}{\pi^{2k}}(\log|t|^{-1})^{2k+1}\int_{\varepsilon}^{1-\varepsilon}(\sin(\pi x))^{2k}dx,
\end{displaymath}
where $\varepsilon=(\log c)/\log|t|$. We conclude thanks to the identitiy $\int_{0}^{1}(\sin(\pi x))^{2k}dx$ $=(2k)!/(2^{2k}(k!)^{2})$. The second integral is treated analogously.
\end{proof}
\begin{proposition}\label{proposition:4.1}
Let $0<|t|<c$. For the $L^{2}$-pairing $\langle\cdot,\cdot\rangle_{L^{2},t}$ on $H^{0}(\fZ_{t},\omega_{\fZ_{t}}^{k+1})$, we have as $t\to 0$:\\
i. $\langle\theta_{i}(t),\theta_{j}(t)\rangle_{L^{2},t}=\OO(1)$ if $i\neq j$;\\
ii. $\langle\theta_{i}(t),\alpha_{j}(t)\rangle_{L^{2},t}=\OO(1)$, for all $i,j$;\\
iii. $\langle\theta_{i}(t),\beta_{l}^{(j)}(t)\rangle_{L^{2},t}=\OO(1)$, for all $i,j,l$;\\
iv. $\langle\theta_{i}(t),\theta_{i}(t)\rangle_{L^{2},t}=\frac{1}{2^{k-1}\pi^{2k}}(\log|t|^{-1})^{2k+1}\frac{(2k)!}{(k!)^{2}}R(t)(1+\OO((\log|t|)^{-2}))$, for all $i$.
\end{proposition}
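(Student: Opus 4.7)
The plan is to localize each $L^{2}$-integral to the collars around the pinched geodesics together with a thick region bounded away from the nodes, and then to combine the Obitsu--Wolpert pinching expansion for the hyperbolic metric with an angular-symmetry argument. For each $j=1,\ldots,n$ fix plumbing coordinates: $u_{j}$ centered at $a_{j}$ on the $X$-side and $v_{j}$ centered at $b_{j}$ on the $T_{j}$-side, related by $u_{j}v_{j}=t$, so that the $j$-th collar is identified with the model annulus $A_{t}^{(j)}=\{|t|/c<|u_{j}|<c\}$. Away from the union of the collars, $\fZ_{t}$ is biholomorphic to a fixed compact piece $K\subset Y\setminus\{\text{nodes}\}$ and the family hyperbolic metric converges to the limit metric uniformly on $K$. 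On each collar the Obitsu--Wolpert expansion gives $\langle\cdot,\cdot\rangle_{\hyp,t}=\langle\cdot,\cdot\rangle_{t}(1+\eta_{t})$ with $\eta_{t}=O((\log|t|)^{-2})$ uniformly on $A_{t}^{(j)}$.

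Next I would read off the local expressions of the sections on each collar. Since $dv_{j}/v_{j}=-du_{j}/u_{j}$ and the prescribed residues $1$ and $(-1)^{k+1}$ combine to $+1$ after taking $(k+1)$-th powers, Construction \ref{construction:2} yields $\theta_{j}(0)=(du_{j}/u_{j})^{k+1}(1+f_{j}(u_{j})+g_{j}(v_{j}))$ on $A_{t}^{(j)}$, with $f_{j},g_{j}$ holomorphic and vanishing at $0$. On any other collar $A_{t}^{(i)}$ with $i\neq j$ the form $\theta_{j}(0)$ vanishes identically (no residue at $a_{i}$ and zero on $T_{i}$), and a Hadamard three-circles estimate in $u_{i}$ together with flatness of $R\fg_{\ast}\omega_{\fZ/D}^{k+1}$ gives $\theta_{j}(t)=(du_{i}/u_{i})^{k+1}\cdot O(|u_{i}|+|v_{i}|+|t|)$. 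The same estimate applies to each $\alpha_{m}(t)$ and each $\beta_{l}^{(j)}(t)$ on every collar, since these extend to sections of $\omega_{Y}^{k+1}$ vanishing at all the nodes. Parts \textit{i}--\textit{iii} then reduce to Cauchy--Schwarz: in every integrand at least one factor is $O(|u_{j}|+|v_{j}|+|t|)$, so the collar contributions are bounded by $O(I_{2}(t))=O(1)$ via Lemma \ref{lemma:4.2}\textit{ii}, and the thick contribution is $O(1)$ by uniform convergence on $K$.

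Part \textit{iv} is the main computation. On the $i$-th collar, integration against the model measure $\omega_{t}$ of the identity
\begin{displaymath}
|1+f_{i}(u_{i})+g_{i}(v_{i})|^{2}=1+2\Real f_{i}(u_{i})+2\Real g_{i}(v_{i})+|f_{i}(u_{i})+g_{i}(v_{i})|^{2}
\end{displaymath}
kills the two linear cross terms: expanding $f_{i},g_{i}$ in power series in $u_{i},v_{i}=t/u_{i}$ respectively and using $\int_{|u_{i}|=\rho}u_{i}^{m}\,d\theta=0$ for $m\neq 0$, both $2\Real f_{i}$ and $2\Real g_{i}$ integrate to zero; the mixed term $\int\Real(f_{i}\overline{g_{i}})|du_{i}/u_{i}|_{t}^{2(k+1)}\omega_{t}$ vanishes by the same angular orthogonality applied to the Laurent series $f_{i}(u_{i})\overline{g_{i}(t/u_{i})}$; the remaining quadratic terms $|f_{i}|^{2}+|g_{i}|^{2}$ contribute $O(I_{2}(t))=O(1)$. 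The leading constant $1$ integrated against $\omega_{t}$ is exactly $I_{1}(t)$, whose asymptotic is given by Lemma \ref{lemma:4.2}\textit{i}. Contributions from other collars and from $K$ are $O(1)$ by the argument used for \textit{i}--\textit{iii}. Inserting the Obitsu--Wolpert factor $1+\eta_{t}$ multiplies the model integrand by $1+O((\log|t|)^{-2})$; since $I_{1}(t)\sim(\log|t|^{-1})^{2k+1}$, every $O(1)$ remainder together with the $\eta_{t}$-correction amounts to a relative error of $O((\log|t|)^{-2})$, yielding \textit{iv}.

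The main obstacle I anticipate is coupling the $\eta_{t}$-correction with the angular-cancellation argument: against the perturbed measure $(1+\eta_{t})\omega_{t}$ the linear cross terms no longer integrate to zero, since $\eta_{t}$ is not rotation-invariant. The remedy is to evaluate the model integrals first, where angular symmetry applies, and then separately to bound the extra $\eta_{t}$-contribution by Cauchy--Schwarz, using $\eta_{t}=O((\log|t|)^{-2})$ and the growth $I_{1}(t)=O((\log|t|^{-1})^{2k+1})$ to absorb the resulting terms into the stated error.
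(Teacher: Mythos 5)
Your argument is correct and follows the same route as the paper: the paper's proof of Proposition \ref{proposition:4.1} is a two-line reduction to Lemma \ref{lemma:4.2}, Wolpert's collar expansion of the family hyperbolic metric, and Construction \ref{construction:2}, with the details deferred to the proof of Theorem 6 (``the first term'') in Obitsu--Wolpert, and your writeup supplies exactly those details (the angular-orthogonality cancellation of the linear cross terms is a harmless embellishment, since the $\OO(1)$ bound of Lemma \ref{lemma:4.2}.\textit{ii} already suffices against the growth of $I_{1}(t)$). The only slip is the phrase ``$\theta_{j}(0)$ vanishes identically'' on a collar $A_{t}^{(i)}$ with $i\neq j$ --- only the constant Laurent coefficient and the $T_{i}$-side restriction vanish, since $\gamma_{j}$ is nonzero near $a_{i}$ --- but the displayed estimate $(du_{i}/u_{i})^{k+1}\cdot\OO(|u_{i}|+|v_{i}|+|t|)$ that you actually use is the correct statement, so this does not affect the proof.
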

\begin{proof}
The proposition is established by combination of Lemma \ref{lemma:4.2}, \cite[Exp. 4.2]{Wolpert:hyperbolic} and the construction of the forms $\alpha_{i}(t),\beta_{l}^{(j)}(t),\theta_{k}(t)$ (Construction \ref{construction:2}). We refer to \cite[Proof of Th. 6, \textbf{The first term}]{Obitsu-Wolpert} for details.
\end{proof}
\begin{proposition}\label{proposition:4.2}
The functions $\langle\alpha_{i}(t),\alpha_{j}(t)\rangle_{L^{2},t}$, $\langle\alpha_{i}(t),\beta_{l}^{(j)}(t)\rangle_{L^{2},t}$, $\langle\beta_{l}^{(j)}(t),$ $\beta_{l'}^{(j')}(t)\rangle_{L^{2},t}$ continuously depend on $t\in D$.
\end{proposition}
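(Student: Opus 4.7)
The plan is to adapt the strategy of Proposition \ref{proposition:4.1} and \cite[Proof of Th. 6]{Obitsu-Wolpert}: decompose each fiber $\fZ_t$ ($t\neq 0$) into a thick part and $n$ collar neighborhoods $A_t^{(j)}$ of the pinching geodesics $\gamma_j(t)$, and estimate each piece separately. The structural input that distinguishes the $\alpha$'s and $\beta$'s from the $\theta$'s is the vanishing of their residues at the nodes: in a plumbing coordinate $u$ around the $j$-th node, both $\alpha_i(t)$ and $\beta_l^{(j')}(t)$ admit an expansion of the form $u\,h(t,u)(du/u)^{k+1}$ with $h$ holomorphic in $u$ and continuous in $t$, whereas the $\theta$'s carry a leading $(du/u)^{k+1}$ term.

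Continuity at interior points $t_0\in D\setminus\lbrace 0\rbrace$ is immediate: on a neighborhood of $t_0$ the hyperbolic metric and the frame $\alpha_i(t),\beta_l^{(j)}(t)$ vary smoothly, and the integrands are uniformly dominated on compact subsets of $\fZ_{t_0}$. The substantial step is continuity at $t=0$. On the thick part $\fZ_t^{\mathrm{thick}}$, Wolpert's pinching expansion \cite[Exp. 4.2]{Wolpert:hyperbolic} provides uniform smooth convergence of $ds^{2}_{\hyp,\fZ_t}$ to the cuspidal hyperbolic metric on $Y^{\circ}=X^{\circ}\sqcup T_{1}^{\circ}\sqcup\ldots\sqcup T_{n}^{\circ}$. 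Together with the smooth dependence of the frame on $t$, dominated convergence gives
\begin{displaymath}
\lim_{t\to 0}\int_{\fZ_t^{\mathrm{thick}}}\langle\alpha_i(t),\alpha_j(t)\rangle_{\hyp,t}\,\omega_t
=\int_{Y^{\circ}\cap\fZ_0^{\mathrm{thick}}}\langle\alpha_i,\alpha_j\rangle_{\hyp}\,\omega,
\end{displaymath}
and analogously for the $\beta$--$\beta$ and $\alpha$--$\beta$ pairings; the mixed $\alpha$--$\beta$ contribution vanishes in the limit because $\alpha_i$ and $\beta_l^{(j)}$ have disjoint supports on $\fZ_0^{\mathrm{thick}}$.

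The delicate step is the collar contribution. On each $A_t^{(j)}$ I would replace the hyperbolic metric by the model metric \eqref{equation:4.2a}--\eqref{equation:4.2b}, justified up to factors $1+O((\log|t|)^{-2})$ by Wolpert's expansion, write $\alpha_i(t)=u\,g(t,u)(du/u)^{k+1}$ and similarly for the $\beta$'s, and apply Lemma \ref{lemma:4.2}.ii. The extra factor of $u$ supplies the $|u|^{2}$ suppression that tames the singular behavior of the hyperbolic area form at the collar ends, so each collar integral is $O(1)$ uniformly in $t$. Expanding $g(t,u)$ as a power series in $u$ and integrating term by term, one verifies that the collar integral converges, as $t\to 0$, to the absolutely convergent integral over a punctured disc around the cusp of $Y^{\circ}$. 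Summing the thick and collar contributions identifies the limit at $t=0$ with the $L^{2}$ pairing on $Y^{\circ}$, which gives the required continuity.

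The main obstacle I foresee is the precise control in the collar regions: uniformly in $t$, one must deform the hyperbolic metric to Wolpert's model, carry out the Laurent expansion of $\alpha_i(t)$ and $\beta_l^{(j)}(t)$ in the plumbing coordinate up to the order needed to absorb all $\log|t|$ factors, and verify, for mixed pairings with $j\neq j'$, that the relevant leading coefficient vanishes so that Lemma \ref{lemma:4.2}.ii applies with the additional gain. Aside from this careful bookkeeping, every analytic ingredient is already supplied by \cite[Exp. 4.2]{Wolpert:hyperbolic} and by Lemma \ref{lemma:4.2}.
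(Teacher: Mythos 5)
Your overall strategy coincides with the paper's (which itself adapts Obitsu--Wolpert): interior continuity is easy, the thick part is controlled by Wolpert's pinching expansion, and everything reduces to the collar integral $\int_{A_t}\langle\vartheta_1(t),\vartheta_2(t)\rangle_t\,\omega_t$ for the model metric (\ref{equation:4.2a})--(\ref{equation:4.2b}), exploiting that the $\alpha$'s and $\beta$'s have vanishing residue at the nodes. However, the collar analysis as you have written it contains a genuine gap. You posit $\alpha_i(t)=u\,g(t,u)(du/u)^{k+1}$ with $g$ holomorphic in $u$; on the annulus $A_t=\lbrace |t|/c<|u|<c\rbrace$ a holomorphic section has a full Laurent series, and the correct statement is the two-sided decomposition $\vartheta=(du/u)^{k+1}(\vartheta_u+\vartheta_v)$, with $\vartheta_u=O(u)$ extending to $|u|<c$ and $\vartheta_v=O(v)$ extending to $|v|<c$ in the second plumbing coordinate $v=t/u$. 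The $v$-tail cannot be discarded: for a pairing $\langle\beta^{(j)}_l,\beta^{(j)}_{l'}\rangle$ at the $j$-th collar it is the $v$-$v$ term, not the $u$-$u$ term, that survives in the limit and produces the cusp integral on $T_j$. With the one-sided ansatz your term-by-term integration misses this contribution and also the cross terms $\vartheta_{1u}\overline{\vartheta_{2v}}$, which the paper must (and does) estimate separately as $O(|t|(\log|t|^{-1})^{2k+1})$ in (\ref{equation:4.3}) using the Schwarz bounds $|\vartheta_{iu}|\leq c'|u|$, $|\vartheta_{iv}|\leq c'|v|$.

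A secondary point: Lemma \ref{lemma:4.2}.ii only gives $O(1)$ boundedness of the collar integral, which is not continuity. To identify the limit one must freeze $\vartheta_{iu}(t)$ at $\vartheta_{iu}(0)$ (again by Schwarz, with error $O(|t|(\log|t|)^{2k})$), split the collar into $\lbrace |t|^{1/2}\leq|u|<c\rbrace$ and $\lbrace |t|/c<|u|\leq|t|^{1/2}\rbrace$, and on the outer piece compare $\langle\cdot,\cdot\rangle_t\,\omega_t$ uniformly with the cusp metric $\langle\cdot,\cdot\rangle_0\,\omega_0$ via the $\sin x$ estimates (\ref{equation:4.7})--(\ref{equation:4.8}); the inner piece is $O(|t|(\log|t|)^{2k})$. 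Your ``expand and integrate term by term'' is morally this computation, but the uniform-in-$t$ metric comparison is where the actual work lies and is not supplied by Lemma \ref{lemma:4.2} alone.
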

\begin{proof}
Fix $\vartheta_{1}(t), \vartheta_{2}(t)\in\lbrace\alpha_{i}(t), \beta_{l}^{(j)}(t)\rbrace_{i,j,l}$. Corresponding to every node of $\fZ_{0}$, there is a holomorphic embedding of the collar $C_{t}:=\lbrace (u,v)\in\CC^{2}\mid |t|/c\leq |u|, |v|\leq c,\, uv=t\rbrace$ into $\fZ_{t}$, provided $0<c<1$ is small enough. Observe that $A_{t}$ is then identified to the interior of $C_{t}$. This is exposed in \cite[Cons. 4.1]{SingARR}. By \cite[Exp. 4.2]{Wolpert:hyperbolic}, we are reduced to study the continuity at 0 of the function
\begin{displaymath}
	F(t):=\int_{A_{t}}\langle\vartheta_{1}(t),\vartheta_{2}(t)\rangle_{t}\omega_{t}.
\end{displaymath}
Here $\langle\cdot,\cdot\rangle_{t}$ and $\omega_{t}$ are the metric and K\"ahler form defined by (\ref{equation:4.2a})--(\ref{equation:4.2b}). 

On $A_{t}$ we can decompose $\vartheta_{i}=(\frac{du}{u})^{k+1}(\vartheta_{iu}+\vartheta_{iv})$, $i=1,2$. The functions $\vartheta_{iu}$ (resp. $\vartheta_{iv}$) are holomorphic in the domain $|u|<c$ (resp. $|v|<c$) and vanish at $u=0$ (resp. $v=0$). Following \cite[Proof of Theorem 6, \textbf{The second term}]{Obitsu-Wolpert}, there are uniform bounds $|\vartheta_{iu}|\leq c'|u|$, $|\vartheta_{iv}|\leq c'|v|$, for some constant $c'$ depending only on $\vartheta_{1},\vartheta_{2}$. Jointly with Lemma \ref{lemma:4.2} \textit{i}, this yields
\begin{equation}\label{equation:4.3}
\begin{split}
|\int_{A_{t}}(\vartheta_{1u}\overline{\vartheta_{2v}}+\vartheta_{1v}\overline{\vartheta_{2u}})
\langle(du/u)^{k+1},(du/u)^{k+1}\rangle_{t}&\omega_{t}|=\\
&\OO(|t|(\log|t|^{-1})^{2k+1}).
\end{split}
\end{equation}
The constant involved in the $\OO$ term depends only on $\vartheta_{1}$ and $\vartheta_{2}$. The quantity (\ref{equation:4.3}) tends to $0$ as $t\to 0$. It remains to describe the contribution of the terms $\vartheta_{1u}\overline{\vartheta_{2u}}$ and $\vartheta_{1v}\overline{\vartheta_{2v}}$.

We now focus on the term corresponding to $\vartheta_{1u}\overline{\vartheta_{2u}}$. The study of $\vartheta_{1v}\overline{\vartheta_{2v}}$ is treated analogously. The forms $\vartheta_{1u}$, $\vartheta_{2u}$ depend holomorphically on $t$ and vanish at $u=0$. According to \cite[Proof of Theorem 6, \textbf{The second term}]{Obitsu-Wolpert}, the Schwarz' lemma shows that $\vartheta_{1u}(t)-\vartheta_{1u}(0)$ and $\vartheta_{2u}(t)-\vartheta_{2u}(0)$ are bounded by $c'|ut|\leq c''$. The constants $c', c''$ depend only on $\vartheta_{1}$ and $\vartheta_{2}$. The following expansion is then straightforward:
\begin{align}
	\int_{A_{t}}\vartheta_{1u}\overline{\vartheta_{2u}}\langle (du/u)^{k+1},&(du/u)^{k+1}\rangle_{t}\omega_{t}=\label{equation:4.4}\\
	&\int_{A_{t}}\vartheta_{1u}(0)\overline{\vartheta_{2u}}(0)\langle(du/u)^{k+1},(du/u)^{k+1}\rangle_{t}\omega_{t}\label{equation:4.5}\\
	&+\OO(|t|(\log|t|)^{2k}).\label{equation:4.6}
\end{align}
The constant involved in the $\OO$ term depends only on $\vartheta_{1}, \vartheta_{2}$. The quantity (\ref{equation:4.6}) converges to $0$ as $t\to 0$. To achieve the conclusion, we are thus reduced to establish the continuity of the function $G(t):=$(\ref{equation:4.5}).

Split the domain $A_{t}=\lbrace |t|/c<|u|<c\rbrace$ into the union of $A_{1t}:=\lbrace |t|^{1/2}\leq |u|<c\rbrace$ and $A_{2t}:=\lbrace |t|/c<|u|\leq |t|^{1/2}\rbrace$. Accordingly, decompose $G(t)=G_{1}(t)+G_{2}(t)$. For the first, the function $x=\pi\log|u|/\log|t|$ on $A_{1t}$ satisfies $0\leq x\leq\pi/2$. As in loc. cit., we have $0\leq x-\frac{1}{6}x^{3}\leq\sin x\leq x$ and $0\leq x^{2}-(\sin x)^{2}\leq \frac{1}{4}x^{4}-\frac{1}{36}x^{6}$. Therefore
\begin{equation}\label{equation:4.7}
	(x-(\sin x))^{2}=\OO((\log|t|)^{-4k}(\log|u|)^{6k}),\quad x=\frac{\pi\log|u|}{\log|t|},\quad u\in A_{1t}.
\end{equation}
After an elementary algebraic manipulation, the estimate (\ref{equation:4.7}) yields
\begin{equation}\label{equation:4.8}
	\begin{split}
	\langle (du)^{k+1}, &(du)^{k+1}\rangle_{t}\omega_{t}=\langle (du)^{k+1}, (du)^{k+1}\rangle_{0}\omega_{0}\\
	+&\OO((\log|t|)^{-2k}|u|^{2k}(|u|^{2k}\log|u|)^{6k}idu\wedge d\cu),\quad u\in A_{1t}.
	\end{split}
\end{equation}
Besides, observe that $|\vartheta_{1u}(0)\overline{\vartheta_{2u}}(0)||u|^{2k}=\OO(|u|^{2k+2})$. Indeed, Schwarz' lemma ensures uniform bounds $|\vartheta_{1u}|, |\vartheta_{2u}|\leq c'|u|$. Take this fact into account together with (\ref{equation:4.8}). After integration we get
\begin{equation}\label{equation:4.9}
	G_{1}(t)=\int_{A_{1t}}\vartheta_{1u}(0)\overline{\vartheta_{2u}(0)}\langle(du/u)^{k+1},(du/u)^{k+1}\rangle_{0}\omega_{0}
	+\OO((\log|t|)^{-2k}).
\end{equation}
The $\OO$ term converges to $0$ as $t\to 0$, whereas the integral on the right hand side of (\ref{equation:4.9}) is continuous --as a function of $t$-- at $t=0$. For the second function $G_{2}(t)$, there is a trivial estimate
\begin{displaymath}
	\vartheta_{1u}\overline{\vartheta_{2u}}\langle(du/u)^{k+1},(du/u)^{k+1}\rangle_{t}\omega_{t}=\OO((\log|t|)^{2k}idu\wedge d\cu),\quad u\in A_{2t}.
\end{displaymath}
After integration over $A_{2t}$, we find 
\begin{equation}\label{equation:4.10}
G_{2}(t)=\OO(|t|(\log|t|)^{2k}).
\end{equation}
Hence $G_{2}(t)\to 0$ as $t\to 0$. The proposition results from (\ref{equation:4.3})--(\ref{equation:4.6}) and (\ref{equation:4.9})--(\ref{equation:4.10}).
\end{proof}
\begin{proof}[Proof of Theorem \ref{theorem:4.3}]
For every $t\in D$, $t\neq 0$, introduce the matrix $H_{t}$ of the hermitian product $\langle\cdot,\cdot\rangle_{L^{2},t}$ on $H^{0}(\fZ_{t},\omega_{\fZ_{t}})$, in base $\theta_{i}(t)$, $\alpha_{j}(t)$, $\beta_{l}^{(k)}(t)$:
\begin{displaymath}
	H_{t}=
	\left(\begin{array}{cc}
		(\langle\theta_{i}(t),\theta_{j}(t)\rangle_{t}	)_{ij}&	\cdots\\
		\vdots	&	B_{t}
	\end{array}\right),
\end{displaymath}
where $B_{t}$ denotes the matrix
\begin{displaymath}
	\left(\begin{array}{cccc}
	(\langle\alpha_{i}(t),\alpha_{j}(t)\rangle_{t})_{ij}	&	\cdots	&	\cdots	&\cdots\\
	\vdots	&	(\langle\beta_{i}^{(1)}(t),\beta_{j}^{(1)}(t)\rangle_{t})_{i,j}	&	\cdots	&\vdots\\
	\vdots	&\vdots	&\ddots	&\vdots\\
	\vdots	&\cdots	&\cdots	&(\langle\beta_{i}^{(n)}(t),\beta_{j}^{(n)}(t)\rangle_{t})_{ij}
	\end{array}\right).
\end{displaymath}
The conclusions of propositions \ref{proposition:4.1} and \ref{proposition:4.2} guarantee that \cite[Lemma 7]{Obitsu-Wolpert} applies to $H_{t}$:
\begin{equation}\label{equation:4.11}
	\det H_{t}=\det B_{t}(\prod_{i=1}^{n}\langle\theta_{i}(t),\theta_{i}(t)\rangle_{t})(1+\OO(\delta)),
\end{equation}
with $\delta:=\sum_{i=1}^{n}\langle\theta_{i}(t),\theta_{i}(t)\rangle_{t}^{-1}$. The constant involved in the $\OO$ term does not depend on $t$. We insert the estimate of Proposition \ref{proposition:4.1} \textit{iv} into equation (\ref{equation:4.11}). By the continuity property claimed by Proposition \ref{proposition:4.2}, we find
\begin{displaymath}
	\lim_{\substack{t\to 0\\ t\neq 0}}\det H_{t}(\log|t|^{-1})^{-n(2k+1)}=
	\left(\frac{(2k)!}{2^{k-1}\pi^{2k}(k!)^{2}}\right)^{n}\det B_{0}.
\end{displaymath}
The proof of the theorem is now complete.
\end{proof}
We are ready to finish the proof of Theorem \ref{theorem:4.1}.
\begin{proof}[Proof of Theorem \ref{theorem:4.1}]
The theorem is an immediate application of Corollary \ref{corollary:4.1}, Theorem \ref{theorem:4.3} and the definition of the Quillen metric (Definition \ref{definition:Quillen}).
\end{proof}
\section{Metrized Mumford isomorphisms on $\SM_{g,n}$}\label{section:metrized}
This section focuses on an \textit{arithmetic} counterpart of Theorem \ref{theorem:3.1}. We derive some consequences, such as the Takhtajan-Zograf local index theorem \cite{ZT1}--\cite{ZT2} and theorems A and B.

Recall from Section \ref{section:conventions} that the invertible sheaves $\lambda_{k+1;g,n}$ and $\psi_{g,n}$ on $\SM_{g,n}$ come equipped with the Quillen and Wolpert metrics, respectively. The trivial sheaf together with the norm $C(g,n)|\cdot|$ is denoted by $\OO(C(g,n))$. Here $C(g,n)$ is the constant introduced in Section \ref{section:conventions} and $|\cdot|$ is the absolute value. Finally, $\kappa_{g,n}$ is enriched with the Liouville metric \cite[Sec. 4]{SingARR}. As a special feature, the Liouville metric is defined and \textit{continuous} over $\SCM_{g,n}$. Moreover, if $\gamma:\SCM_{g,n}\times\SCM_{1,1}^{\times n}\rightarrow\SCM_{g+n,0}$ is a reiterated clutching morphism, then the isomorphism (\ref{equation:3.7}) $\gamma^{\ast}\kappa_{g+n,0}\overset{\sim}{\rightarrow}\kappa_{g,n}\boxtimes\kappa_{1,1}^{\boxtimes n}$ becomes an isometry for the Liouville metrics. The reader is referred to loc. cit. for full details.
\begin{theorem}\label{theorem:5.1}
Let $k\geq 0$ be an integer and $\DD^{\circ}_{k+1;g,n}$ the restriction of the Mumford isomorphism $\DD_{k+1;g,n}$ to $\SM_{g,n}$. Then $\DD^{\circ}_{k+1;g,n}$ induces an isometry
\begin{displaymath}
	\overline{\DD}^{\circ}_{k+1;g,n}:\lambda_{k+1;g,n;Q}^{\otimes 12}\otimes\psi_{g,n;W}
	\overset{\sim}{\longrightarrow}\overline{\kappa}_{g,n}^{\otimes (6k^{2}+6k+1)}\otimes\OO(C(g,n)).
\end{displaymath}
\end{theorem}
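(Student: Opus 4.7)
The plan is to reduce the pointed case to the compact case ($n=0$, $g\geq 2$) via the reiterated clutching morphism $\gamma$, with Theorem \ref{theorem:4.1} providing the key analytic input. Since $\DD_{k+1;g,n}^\circ$ is unique up to sign by \cite[Cor.~3.2]{SingARR}, the ratio of the hermitian norms in the statement defines a smooth positive function $F_{k+1;g,n}$ on $\SM_{g,n}^{\an}$, and the theorem amounts to the identity $F_{k+1;g,n}\equiv C(g,n)^{12}$.

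For the compact base case $n=0$, $g\geq 2$, I would invoke the Bismut-Gillet-Soul\'e family local index theorem for $(\pi:\SCC_{g,0}\to\SCM_{g,0},\omega^{k+1}_{\hyp})$. Here $\psi_{g,0}$ is trivial, the hyperbolic metric on $\omega$ is smooth on the compact surface, and the Quillen metric of Definition \ref{definition:Quillen} reduces to the classical Bismut-Gillet-Soul\'e Quillen metric. The metric Mumford isomorphism is then the classical one on $\SM_{g,0}$; its anomaly constant, expressed through the heat-kernel normalization of the local index density, matches $C(g,0)$ upon substituting the definition of $E_{k+1}(g,0)$.

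For $n\geq 1$, fix $N=(P,Q_1,\ldots,Q_n)\in\SM_{g,n}^{\an}\times(\SM_{1,1}^{\an})^{\times n}$ and take a one-parameter smoothing $\fg:\fZ\to D$ with $\fZ_0=\gamma(N)$ and $\fZ_t\in\SM_{g+n,0}$ for $t\neq 0$, as in Section \ref{section:quillen}. The compact case applied fiberwise to $\fZ_t$ gives the metric identity $F_{k+1;g+n,0}(\fZ_t)=C(g+n,0)^{12}$ for every $t\ne 0$. I then pass to the limit $t\to 0$, combining three inputs: the Liouville metric on $\kappa_{g+n,0}$ is continuous across $\partial\SM_{g+n,0}$ and the isometry $\gamma^{\ast}\overline{\kappa}_{g+n,0}\simeq\overline{\kappa}_{g,n}\boxtimes\overline{\kappa}_{1,1}^{\boxtimes n}$ from \cite[Sec.~4]{SingARR} reduces it at $\gamma(N)$ to the product Liouville norm at $N$; the family Quillen metric on $\lambda_{k+1;g+n,0}$ degenerates, for a clutching-adapted frame, as $|t|^{-n/6}$ times the product of Quillen norms at $N$, by Theorem \ref{theorem:4.1}; and the algebraic compatibility $\gamma^{\ast}\DD_{k+1;g+n,0}^{\circ}\equiv\DD_{k+1;g,n}^{\circ}\boxtimes\DD_{k+1;1,1}^{\circ\boxtimes n}$ of Corollary \ref{corollary:3.2}, combined with the clutching relation $\gamma^{\ast}\delta_{g+n,0}|_{\SM\times\SM^{\times n}}\simeq\psi_{g,n}^{-1}\boxtimes(\psi_{1,1}^{-1})^{\boxtimes n}$ of Corollary \ref{corollary:3.1}, inserts exactly a $\psi_{g,n;W}\boxtimes\psi_{1,1;W}^{\boxtimes n}$ factor into the comparison. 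A local computation at each node, using the plumbing $uv=t$ between the two rs coordinates, identifies the Wolpert norm of the canonical plumbing section with the $|t|^{n/6}$ factor from Theorem \ref{theorem:4.1}, so the analytic degeneration is absorbed by the Wolpert factor. Together with the relation $C(g+n,0)=C(g,n)C(1,1)^n$ from (\ref{equation:C}), the limit becomes
\[
F_{k+1;g,n}(P)\cdot\prod_{j=1}^n F_{k+1;1,1}(Q_j)=C(g+n,0)^{12}.
\]
Since $P$ and the $Q_j$ vary independently on the product, both $F_{k+1;g,n}$ and $F_{k+1;1,1}$ are forced to be constant. Applying the identity to $(g,n)=(1,1)$, so that $(g+n,0)=(2,0)$ falls under the compact case and $C(2,0)=C(1,1)^2$, yields $F_{k+1;1,1}\equiv C(1,1)^{12}$; substituting this back gives $F_{k+1;g,n}\equiv C(g,n)^{12}$ for all $(g,n)$ with $n\geq 1$.

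The main obstacle is the precise matching in the third bullet above: identifying the analytic factor $|t|^{-n/6}$ from Theorem \ref{theorem:4.1} with the Wolpert norm of the canonical plumbing section at the clutched nodes, and verifying that the numerical constants cancel exactly. This relies on Wolpert's pinching expansion \cite[Exp.~4.2]{Wolpert:hyperbolic}, on the relation of the rs coordinate normalization to the plumbing parameter, and on careful bookkeeping of the constants $E_{k+1}(g,n)$, $C(g,n)$, together with the factorials and powers of $\pi$ entering Definition \ref{definition:Quillen}.
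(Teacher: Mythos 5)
Your proposal follows essentially the same route as the paper: the paper establishes the compact base case $(g,0)$, $g\geq 2$, via Deligne and Gillet--Soul\'e together with the Selberg-zeta evaluation of the analytic torsion (Proposition \ref{proposition:5.1}), and then declares the general case ``formally the same'' as the $k=0$ case of \cite[Sec.~6]{SingARR}, invoking exactly the ingredients you list (Corollaries \ref{corollary:3.1}--\ref{corollary:3.2}, Lemma \ref{lemma:4.1}, Theorem \ref{theorem:4.1}, the continuity of the Liouville metric, and the relation (\ref{equation:C})), including the same final constancy-and-separation-of-variables step on the product $\SM_{g,n}\times\SM_{1,1}^{\times n}$ with the case $(1,1)\mapsto(2,0)$ pinning down $F_{k+1;1,1}$. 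The only caveat is in your bookkeeping of the $|t|$-powers: the twelfth power of the Quillen degeneration must be cancelled by the order-$n$ vanishing of the canonical section of $\delta_{g+n,0}$ along the path (the Wolpert factor accounting only for the identification of the conormal frame $dt_j$ with $du_j\otimes dv_j$ via the plumbing $u_jv_j=t_j$), which you correctly flag as the delicate point.
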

The proof of the theorem follows the same pattern as for \cite[Th. 6.1]{SingARR}. The first step consists in a simple remark: the case $g\geq 2$, $n=0$ is already known from the work of Deligne \cite{Deligne} and Gillet-Soul\'e \cite{ARR}.
\begin{proposition}\label{proposition:5.1}
Let $k\geq 1$, $g\geq 2$ be integers. Then:\\
i. Theorem \ref{theorem:5.1} holds true for $(g,n)=(g,0)$;\\
ii. endow $\delta_{g,0}^{-1}=\OO(-\pd\SM_{g,0})$ with the trivial singular metric coming from the absolute value; write $\overline{\delta}_{g,0}^{-1}$ for the resulting hermitian line bundle. Then $\lambda_{k+1;g,0;Q}^{\otimes 12}$ extends to a continuous hermitian line bundle $\lambda_{k+1;g,0;Q}^{\otimes 12}\otimes\overline{\delta}_{g,0}^{-1}$ on $\SCM_{g,0}$. Moreover, $\overline{\DD}^{\circ}_{k+1;g,0}$ extends to an isometry of continuous hermitian line bundles on $\SCM_{g,n}$
\begin{displaymath}
	\lambda_{k+1;g,0;Q}^{\otimes 12}\otimes\overline{\delta}_{g,0}^{-1}\overset{\sim}{\longrightarrow}
	\overline{\kappa}_{g,0}^{\otimes (6k^{2}+6k+1)}\otimes\OO(C(g,0)).
\end{displaymath}
\end{proposition}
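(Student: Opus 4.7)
The plan is to derive Part (i) from the arithmetic Riemann--Roch theorem of Gillet--Soul\'e applied to the universal smooth family, and to derive Part (ii) from the standard degeneration analysis of the Quillen metric on families of stable curves.

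For Part (i), observe that with $n=0$ the bundle $\psi_{g,0}$ is the empty tensor product (canonically trivial, with trivial metric), no marked sections are present, and the hyperbolic metric on $\omega_{\SCC_{g,0}/\SM_{g,0}}$ is smooth on $\SM_{g,0}$. Hence $\pi:\SCC_{g,0}\mid_{\SM_{g,0}}\to\SM_{g,0}$ is a proper smooth family of compact Riemann surfaces of genus $g\geq 2$ with smooth hermitian relative dualizing sheaf. I would apply the arithmetic Riemann--Roch theorem \cite{ARR} to $\overline{\omega}_{\pi}^{\otimes(k+1)}$: using $T_{\pi}=\omega_{\pi}^{\vee}$, a short expansion shows that the degree-two component of $\widehat{\mathrm{ch}}(\omega^{k+1})\,\widehat{\mathrm{Td}}(T_{\pi})$ equals $\tfrac{6k^{2}+6k+1}{12}\,\ac1(\omega)^{2}$. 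Pushing this forward to $\SM_{g,0}$ and invoking the defining isomorphism $\kappa_{g,0}=\langle\omega,\omega\rangle$ together with the compatibility of the Deligne pairing with the Liouville (i.e.\ hyperbolic) metric, one obtains the claimed identity of arithmetic first Chern classes up to a constant. This constant stems from the $R$-genus correction in \cite{ARR} and is matched with $C(g,0)$ by direct computation. Uniqueness up to sign from \cite[Cor. 3.2]{SingARR} then identifies the resulting metric isomorphism with $\overline{\DD}^{\circ}_{k+1;g,0}$.

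For Part (ii), I would adapt the degeneration analysis of Section \ref{section:quillen} to the compact case $n=0$: near any boundary point of $\SCM_{g,0}$, pick a one-parameter stable deformation $\fg:\fZ\to D$ with generic smooth fiber and nodal central fiber. The Selberg zeta asymptotics of Theorem \ref{theorem:4.2} and the $L^{2}$ metric asymptotics of Theorem \ref{theorem:4.3}, specialized to the compact case, combine in Definition \ref{definition:Quillen} so that $\|\cdot\|_{Q}^{12}$ has a precise polynomial-logarithmic blow-up at $t=0$ whose order in $|t|$ matches the vanishing order of the canonical section of $\OO(\pd\SM_{g,0})=\delta_{g,0}$ along the divisor traced by the deformation. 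Consequently $\lambda_{k+1;g,0;Q}^{\otimes 12}\otimes\overline{\delta}_{g,0}^{-1}$, equipped with the absolute-value metric on $\delta_{g,0}^{-1}$, extends to a continuous hermitian line bundle on $\SCM_{g,0}$. Since $\overline{\kappa}_{g,0}$ already carries a continuous metric on $\SCM_{g,0}$ and $\OO(C(g,0))$ is trivially continuous, the isometry from (i) extends uniquely from the dense open substack $\SM_{g,0}$ by continuity.

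The main obstacle I expect lies in the bookkeeping of constants in Part (i): explicit evaluation of the $R$-genus term from Gillet--Soul\'e on the universal family and its identification, through the analytic torsion of the constant-curvature metric on compact hyperbolic surfaces, with the specific exponential $C(g,0)=\exp((2g-2)(\zeta^{\prime}(-1)/\zeta(-1)+\tfrac{1}{2}))$. Part (ii) is essentially routine once the pinching asymptotics are granted; it reduces to reconciling the $|t|^{1/6}$-type factors on the analytic side with the first-order vanishing of the canonical section of $\delta_{g,0}$ on the algebraic side.
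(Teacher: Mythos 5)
For part (i) you are on essentially the same track as the paper: the statement is reduced to Deligne \cite[Th. 11.4]{Deligne} and Gillet--Soul\'e \cite{ARR}. The one point the paper isolates as the actual content --- and which you only gesture at in your closing paragraph --- is the verification that the Quillen metric of Definition \ref{definition:Quillen}, normalized by $E_{k+1}(g,0)Z(X,k+1)$, coincides with the standard analytic-torsion Quillen metric. This is done by quoting the evaluation $T(\overline{\omega}_{X}^{k+1})=-\log Z(X,k+1)-(2g-2)C_{k}$ (Sarnak, Ebel), whose constant $C_{k}$ is exactly what the definition of $E_{k+1}(g,0)$ was engineered to absorb; the matching is not only of $C(g,0)$ with the $R$-genus term but of the full normalizing constant with the torsion of the hyperbolic metric. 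So your plan for (i) is correct in substance, with the caveat that the ``bookkeeping'' you defer is the whole proof.

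For part (ii) you take a genuinely different, and substantially harder, route than the paper, and as sketched it has a gap. The paper's argument is purely formal: by Theorem \ref{theorem:3.1} the sheaf isomorphism $\lambda_{k+1;g,0}^{\otimes 12}\otimes\delta_{g,0}^{-1}\overset{\sim}{\to}\kappa_{g,0}^{\otimes(6k^{2}+6k+1)}$ is already defined over all of $\SCM_{g,0}$, the Liouville metric on $\kappa_{g,0}$ is continuous on $\SCM_{g,0}$ by \cite[Th. 4.7]{SingARR}, and part (i) says the transported metric agrees with $\|\cdot\|_{Q}^{\otimes 12}$ over $\SM_{g,0}$; continuity of the extension is then automatic and no asymptotic analysis is needed. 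Your proposal instead proves continuity by a direct degeneration analysis of the Quillen metric near the boundary. But Theorems \ref{theorem:4.2} and \ref{theorem:4.3} are established only for the specific one-parameter families produced by the clutching construction (pinching $n$ disjoint separating geodesics that split off genus-one tails); they do not cover arbitrary boundary points of $\SCM_{g,0}$, in particular not the locus of irreducible nodal curves, nor multi-node strata in general position. Carrying out your plan would require redoing the Selberg-zeta and $L^{2}$ asymptotics for all boundary strata, which is precisely the work the paper's transport-of-structure argument is designed to avoid. So part (ii) is not ``essentially routine'' along your route; you should replace it by the formal argument via Theorem \ref{theorem:3.1} and the continuity of the Liouville metric.
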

\begin{proof}
The first assertion is due to Deligne \cite[Th. 11.4]{Deligne} and Gillet-Soul\'e \cite{ARR}: it suffices to check that our definition of Quillen metric agrees with theirs. Let $X$ be a compact Riemann surface of genus $g\geq 2$. Endow the holomorphic tangent bundle $T_{X}$ with the hermitian metric attached to the riemannian hyperbolic metric of constant curvature $-1$ on $X$. The canonical sheaf $\omega_{X}$ is equipped with the dual metric. With these choices, our $L^{2}$ metric on $H^{0}(X,\omega_{X}^{k+1})$ coincides with the one of Deligne and Gillet-Soul\'e. We still need to see that our normalization of the $L^{2}$ metric defining the Quillen metric (Definition \ref{definition:Quillen}) is given by $\exp(\frac{1}{2}T(\overline{\omega}_{X}^{k+1}))$. Here $T(\overline{\omega}_{X}^{k+1})$ is the analytic torsion of $\overline{\omega}_{X}^{k+1}$, for our choice of K\"ahler metric on $T_{X}$ (Section \ref{section:conventions}). The evaluation of $T(\overline{\omega}_{X}^{k+1})$ has been indicated by Sarnak \cite[p. 116, par. 1]{Sarnak} and completed --for instance-- in \cite[Cor. 1.12]{Ebel}:
\begin{displaymath}
		T(\overline{\omega}_{X}^{k+1})=-\log Z(X, k+1) -(2g-2)C_{k},
\end{displaymath}
with $C_{k}$ being the constant
\begin{displaymath}
	\begin{split}
		C_{k}=&2\zeta^{\prime}(-1)-\left(k+\frac{1}{2}\right)^{2}+\left(k+\frac{1}{2}\right)\log(2\pi)\\
		+&\sum_{j=1}^{2k}\left(j-k-\frac{1}{2}\right)\log j\\
		+&\left(\frac{k+1}{2}-\frac{1}{3}\right)\log 2.
	\end{split}
\end{displaymath}
Taking into account the definition of $E_{k+1}(g,0)$, this proves the first claim. The second item already follows from \textit{i}, Theorem \ref{theorem:3.1} and the continuity of the Liouville metric on $\kappa_{g,0}$ on $\SCM_{g,0}$ \cite[Th. 4.7]{SingARR}.
\end{proof}
\begin{proof}[Proof of Theorem \ref{theorem:5.1}]
The case $k=0$ is the content of \cite[Th. 6.1]{SingARR}. Now for $k\geq 1$. Due to Corollary \ref{corollary:3.1}, Corollary \ref{corollary:3.2}, Lemma \ref{lemma:4.1}, Theorem \ref{theorem:4.1} and Proposition \ref{proposition:5.1}, the proof is formally the same as for $k=0$. We refer to \cite[Sec. 6]{SingARR} for the details.
\end{proof}
\begin{corollary}
Let $k\geq 1$ be an integer.\\
i. The hermitian line bundle $\lambda_{k+1;g,n;Q}\otimes\lambda_{k;g,n;Q}^{-1}$ on $\SM_{g,n}$ extends to a continuous hermitian line bundle on $\SCM_{g,n}$, whose underlying sheaf is $\lambda_{k+1;g,n}\otimes\lambda_{k;g,n}^{-1}$.\\
ii. There is an isometry of continuous hermitian line bundles on $\SCM_{g,n}$
\begin{displaymath}
	\lambda_{k+1;g,n;Q}\otimes\lambda_{k;g,n;Q}^{-1}\overset{\sim}{\longrightarrow}\overline{\kappa}_{g,n}^{\otimes k}.
\end{displaymath}
\end{corollary}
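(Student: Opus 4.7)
The plan is to derive the corollary directly from Theorem \ref{theorem:5.1} by comparing its instance for $k$ with its instance for $k-1$, so that the asymmetric contributions $\psi_{g,n;W}$ and $\OO(C(g,n))$ cancel. More precisely, Theorem \ref{theorem:5.1} applied at level $k$ and at level $k-1$ yields isometries of hermitian line bundles on $\SM_{g,n}$,
\begin{displaymath}
\lambda_{k+1;g,n;Q}^{\otimes 12}\otimes\psi_{g,n;W}\overset{\sim}{\longrightarrow}\overline{\kappa}_{g,n}^{\otimes (6k^{2}+6k+1)}\otimes\OO(C(g,n)),
\end{displaymath}
\begin{displaymath}
\lambda_{k;g,n;Q}^{\otimes 12}\otimes\psi_{g,n;W}\overset{\sim}{\longrightarrow}\overline{\kappa}_{g,n}^{\otimes (6k^{2}-6k+1)}\otimes\OO(C(g,n)).
\end{displaymath}
Taking the ratio of the first by the second produces an isometry
\begin{displaymath}
\bigl(\lambda_{k+1;g,n;Q}\otimes\lambda_{k;g,n;Q}^{-1}\bigr)^{\otimes 12}\overset{\sim}{\longrightarrow}\overline{\kappa}_{g,n}^{\otimes 12k}
\end{displaymath}
over $\SM_{g,n}$, since $(6k^{2}+6k+1)-(6k^{2}-6k+1)=12k$ and the auxiliary factors cancel.

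Next I would bring in the algebraic Mumford-type relation. By Lemma \ref{lemma:3.1} there is an isomorphism of invertible sheaves $\lambda_{k+1;g,n}\otimes\lambda_{k;g,n}^{-1}\overset{\sim}{\rightarrow}\kappa_{g,n}^{\otimes k}$ on the whole of $\SCM_{g,n}$, uniquely determined up to sign by \cite[Cor.~3.2]{SingARR}. Raising this algebraic isomorphism to the $12$th power and invoking the same uniqueness statement, I identify it, up to a sign, with the isometry displayed above. Item~ii then follows by extracting a $12$th root: the ratio of the hermitian norms on the two sides of the underlying algebraic isomorphism is a positive continuous real-valued function on $\SM_{g,n}$ whose $12$th power equals $1$, hence it equals $1$ itself. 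Consequently the algebraic isomorphism $\lambda_{k+1;g,n}\otimes\lambda_{k;g,n}^{-1}\overset{\sim}{\rightarrow}\kappa_{g,n}^{\otimes k}$ is already an isometry over $\SM_{g,n}$ when $\kappa_{g,n}$ carries the Liouville metric.

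For item~i, I would use that the Liouville metric on $\kappa_{g,n}$ is defined and \emph{continuous} on the full compactified stack $\SCM_{g,n}$ \cite[Th.~4.7]{SingARR}. Transporting this continuous hermitian structure along the isometry of item~ii provides the required continuous extension of $\lambda_{k+1;g,n;Q}\otimes\lambda_{k;g,n;Q}^{-1}$, the underlying sheaf being of course $\lambda_{k+1;g,n}\otimes\lambda_{k;g,n}^{-1}$ by construction.

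The only delicate point is the passage from the $12$th-power isometry to the isometry itself: one must check that the algebraic sheaf-theoretic isomorphism furnished by Lemma \ref{lemma:3.1} really is the $12$th root, up to sign, of the one obtained by combining the two instances of Theorem \ref{theorem:5.1}. This is where the rigidity statement \cite[Cor.~3.2]{SingARR} is essential, since it reduces the comparison to a sign ambiguity, which is immaterial once one takes norms. Apart from this bookkeeping, the argument is a formal manipulation of the metrized Mumford isomorphism, with no further analytic input beyond what is already collected in Theorem \ref{theorem:5.1}.
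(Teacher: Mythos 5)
Your argument is correct and follows essentially the same route as the paper: the algebraic isomorphism $\lambda_{k+1;g,n}\otimes\lambda_{k;g,n}^{-1}\overset{\sim}{\rightarrow}\kappa_{g,n}^{\otimes k}$ from Lemma \ref{lemma:3.1}, the identification of its $12$th power (up to sign, via the rigidity result) with the ratio of the two instances of Theorem \ref{theorem:5.1}, extraction of the $12$th root of the norm ratio, and the continuity of the Liouville metric on $\SCM_{g,n}$ to conclude. The only difference is cosmetic: you spell out the positivity argument for the $12$th root, which the paper leaves implicit.
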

\begin{proof}
First of all, from the isomorphism (\ref{equation:3.8}) of Lemma \ref{lemma:3.1} we get an isomorphism
\begin{equation}\label{equation:5.0.1}
	\DD:\lambda_{k+1;g,n}\otimes\lambda_{k;g,n}^{-1}\overset{\sim}{\longrightarrow}\kappa_{1}^{\otimes k}.
\end{equation}
By \cite[Cor.3.2]{SingARR} and Theorem \ref{theorem:5.1}, $\DD^{\otimes 12}$ coincides, up to a sign, with $\DD_{k+1;g,n}\otimes\DD_{k;g,n}^{\otimes -1}$. Therefore, $\DD\mid_{\SM_{g,n}}^{\otimes 12}$ induces an isometry
\begin{equation}\label{equation:5.0.2}
	\overline{\DD}\mid_{\SM_{g,n}}^{\otimes 12}:(\lambda_{k+1;g,n;Q}\otimes\lambda_{k;g,n;Q}^{-1})^{\otimes 12}\overset{\sim}{\longrightarrow}\overline{\kappa}_{1}^{\otimes 12k}.
\end{equation}
We conclude by (\ref{equation:5.0.1})--(\ref{equation:5.0.2}) and the continuity of the Liovulle metric on $\SCM_{g,n}$ \cite[Cor. 4.8]{SingARR}.
\end{proof}
\begin{remark}
Observe that the metric on $\lambda_{k+1;g,n;Q}\otimes\lambda_{k;g,n;Q}^{-1}$, evaluated for a smooth  $n$-pointed stable curve $(X;p_{1},\ldots,p_{n})$ over $\CC$, involves the factor $R^{\prime}(U,1)$ if $k=0$ and $R(U,k+1)$ if $k\geq 1$, where $R(U,s)$ is the Ruelle z\^eta function of $U=X\setminus\lbrace p_{1},\ldots, p_{n}\rbrace$.\footnote{Recall that $R(U,s)=Z(U,s)/Z(U,s+1)$, for $Z$ the Selberg z\^eta function of $U$.}
\end{remark}
\begin{corollary}[Takhtajan-Zograf local index formula \cite{ZT1}--\cite{ZT2}]\label{corollary:5.2}
Let $\omega_{WP}$, $\omega_{TZ}$ be the Weil-Petersson and Takhtajan-Zograf K\"ahler forms on $\SM_{g,n}^{\an}$, respectively. For every integer $k\geq 0$, the following equality of differential forms on $\SM_{g,n}^{\an}$ holds:
\begin{equation}\label{equation:5.3}
	\c1(\lambda_{k+1;g,n;Q})=\frac{6k^{2}+6k+1}{12\pi^{2}}\omega_{WP}-\frac{1}{9}\omega_{TZ}.
\end{equation}
\end{corollary}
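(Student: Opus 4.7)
The strategy is to take the first Chern form of both sides of the isometry provided by Theorem \ref{theorem:5.1} on the open stratum $\SM_{g,n}^{\an}$, and then invoke the classical identifications of the curvatures of the Liouville metric on $\kappa_{g,n}$ and of the Wolpert metric on $\psi_{g,n}$ with the Weil--Petersson and the Takhtajan--Zograf K\"ahler forms, respectively.

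More precisely, the hermitian isometry
\begin{displaymath}
\overline{\DD}^{\circ}_{k+1;g,n}:\lambda_{k+1;g,n;Q}^{\otimes 12}\otimes\psi_{g,n;W}
\overset{\sim}{\longrightarrow}\overline{\kappa}_{g,n}^{\otimes (6k^{2}+6k+1)}\otimes\OO(C(g,n))
\end{displaymath}
of Theorem \ref{theorem:5.1} is smooth on $\SM_{g,n}^{\an}$, and the trivial hermitian line bundle $\OO(C(g,n))$ has vanishing first Chern form since its metric is the constant $C(g,n)|\cdot|$. Taking $\c1$ of both sides and using the multiplicativity of $\c1$ with respect to tensor products therefore yields the identity
\begin{displaymath}
12\,\c1(\lambda_{k+1;g,n;Q})+\c1(\psi_{g,n;W})
=(6k^{2}+6k+1)\,\c1(\overline{\kappa}_{g,n})
\end{displaymath}
as smooth $(1,1)$--forms on $\SM_{g,n}^{\an}$. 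The corollary is then reduced to the two geometric identifications
\begin{displaymath}
\c1(\overline{\kappa}_{g,n})=\frac{1}{\pi^{2}}\omega_{WP},\qquad
\c1(\psi_{g,n;W})=\frac{4}{3}\,\omega_{TZ},
\end{displaymath}
from which the stated formula follows by elementary algebra.

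The first identification follows from the construction of the Liouville metric on $\kappa_{g,n}$ recalled at the beginning of this section; it is essentially a Deligne pairing computation (see \cite[Sec.~4]{SingARR}), together with Wolpert's theorem identifying the Weil--Petersson form with the curvature of the Deligne pairing of $\omega_{\SCC_{g,n}/\SCM_{g,n}}(\sigma_{1}+\ldots+\sigma_{n})$ with itself, equipped with the hyperbolic metric on the smooth locus $\SM_{g,n}^{\an}$. The second identification is Wolpert's expression for the curvature of the Wolpert/pseudo-metric at the cusps in terms of the Takhtajan--Zograf K\"ahler form (\cite{Wolpert:cusps}), combined with the original result of Takhtajan--Zograf \cite{ZT1,ZT2}; our normalization of $\omega_{TZ}$ and of the hyperbolic metric (curvature $-1$, volume form $\omega$ with total volume $2g-2+n$) is precisely such that the constant is $4/3$.

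The only real point requiring care is the bookkeeping of normalization factors: different sources adopt different conventions for $\omega_{WP}$, $\omega_{TZ}$, the hyperbolic metric and the K\"ahler form. I would therefore devote the body of the proof to fixing these once and for all in accordance with Section \ref{section:conventions}, check the two curvature identities in our conventions (possibly by comparing to \cite{Wolpert:cusps} or the computations of Obitsu--Wolpert already used in Section \ref{section:quillen}), and then simply substitute into the Chern-form identity above. No additional analytic input beyond Theorem \ref{theorem:5.1} is needed.
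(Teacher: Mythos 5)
Your proposal is correct and follows exactly the paper's argument: take first Chern forms of the isometry of Theorem \ref{theorem:5.1} (the bundle $\OO(C(g,n))$ being flat), and substitute the two curvature identities $\c1(\overline{\kappa}_{g,n})=\frac{1}{\pi^{2}}\omega_{WP}$ (from \cite{Wolpert:hyperbolic} for $n=0$ and \cite[Ch. 5]{GFM:thesis} in general) and $\c1(\psi_{g,n;W})=\frac{4}{3}\omega_{TZ}$ (\cite[Th. 5]{Wolpert:cusps}). The arithmetic $\frac{4}{3}\cdot\frac{1}{12}=\frac{1}{9}$ checks out, so nothing further is needed.
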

\begin{proof}
Firstly, there is an equality of differential forms on $\SM_{g,n}^{\an}$
\begin{equation}\label{equation:5.4}
	\c1(\overline{\kappa}_{g,n})=\frac{1}{\pi^{2}}\omega_{WP}.
\end{equation}
For a proof we refer to \cite{Wolpert:hyperbolic} (case $n=0$) and \cite[Ch. 5]{GFM:thesis} (general case). Secondly, \cite[Th. 5]{Wolpert:cusps} proves the identity
\begin{equation}\label{equation:5.5}
	\c1(\psi_{g,n;W})=\frac{4}{3}\omega_{TZ}.
\end{equation}
The relation (\ref{equation:5.3}) is obtained from Theorem \ref{theorem:5.1} and (\ref{equation:5.4})--(\ref{equation:5.5}).
\end{proof}
\begin{proof}[Proof of Theorem A]
The theorem is a consequence of Theorem \ref{theorem:3.1} and Theorem \ref{theorem:5.1}. The argument is the same as for \cite[Th. A]{SingARR}.
\end{proof}
\begin{proof}[Proof of Theorem B]
Let $\UU=\XX\setminus\cup_{j}\sigma_{j}(\BS)$. By Theorem A \textit{ii}, it suffices to show that for every complex embedding $\sigma:K\hookrightarrow\CC$,
\begin{displaymath}
	\log (E_{k+1}(g,n)Z(\UU_{\sigma}(\CC),k+1))=\OO(k\log k).
\end{displaymath}
On the one hand, $\log Z(\UU_{\sigma}(\CC),k+1)=o(1)$. We are left to prove that $\log E_{k+1}(g,n)=\OO(k\log k)$. From the definition of $E_{k+1}(g,n)$, the relevant terms to treat are:
\begin{itemize}
	\item[\textit{i}.] terms in $\log(2k)!$; these are $\OO(k\log k)$ by Stirling's formula;
	\item[\textit{ii}.] $\alpha_{k}:=(\sum_{j=1}^{2k}(j-k)\log j)-k^{2}$; we can write
	\begin{displaymath}
		\alpha_{k}=\log\Gamma_{2}(2k+1)+k\log(2k)!-k^{2},
	\end{displaymath}
	where $\Gamma_{2}$ is the Barnes' double gamma function \cite{Barnes} (see also \cite{Sarnak}). By loc. cit. and Stirling's formula we have
	\begin{align}
		&\log\Gamma_{2}(2k+1)=(2k)^{2}\left(\frac{\log(2k)}{2}-\frac{3}{4}\right)+\OO(k)\label{equation:5.1}\\
		&k\log(2k)!=2k^{2}\log(2k)-3k^{2}+\OO(k\log k).\label{equation:5.2}
	\end{align}
	From the expansions (\ref{equation:5.1})--(\ref{equation:5.2}) we infer $\alpha_{k}=\OO(k\log k)$.
\end{itemize}
The proof is complete.
\end{proof}
\section{Application to pointed stable curves of ge\-nus 0}\label{section:applications}
Let $K$ be a number field, whose ring of integers we denote $\OO_{K}$. Set $\BS=\Spec\OO_{K}$. Consider a pointed stable curve $(\pi:\XX\rightarrow\BS;\sigma_{1},\ldots,\sigma_{n})$ and define $\UU=\XX\setminus\cup_{j}\sigma_{j}(\BS)$. Assume that $\pi$ is generically smooth. Recall the definition of the $L^{2}$ metric of Section \ref{section:conventions}. For every integer $k\geq 0$ and every field embedding $\sigma:K\hookrightarrow\CC$, the complex vector space $H^{0}(\XX,\omega_{\XX/\BS}^{k+1}(k\sigma_{1}+\ldots+k\sigma_{n}))\otimes_{\sigma}\CC$ comes equipped with the $L^{2}$ metric attached to the hyperbolic metric on $\UU_{\sigma}(\CC)$. Let it be $\langle\cdot,\cdot\rangle_{L^{2},\sigma}$. On the other hand, by the uniformization theorem, there is a biholomorphism $\UU_{\sigma}(\CC)\simeq\Gamma_{\sigma}\backslash\HH$, for some discrete torsion subgroup $\Gamma_{\sigma}\subset\PSL_{2}(\RR)$. The space of modular parabolic forms of weight $2k+2$ for $\Gamma_{\sigma}$, $S_{2k+2}(\Gamma_{\sigma})$, is canonically isomorphic to $H^{0}(\XX,\omega_{\XX/\BS}^{k+1}(k\sigma_{1}+\ldots+\sigma_{n}))\otimes_{\sigma}\CC$. The Petersson metric on $S_{2k+2}(\Gamma_{\sigma})$ induces a hermitian metric on $H^{0}(\XX,\omega_{\XX/\BS}^{k+1}(k\sigma_{1}+\ldots+k\sigma_{n}))\otimes_{\sigma}\CC$, to be denoted $\langle\cdot,\cdot\rangle_{\Pet,\sigma}$. It is readily checked that $\langle\cdot,\cdot\rangle_{L^{2},\sigma}=(2^{k}/\pi)\langle\cdot,\cdot\rangle_{\Pet,\sigma}$. This relation stems from the comparison between the hermitian and riemannian hyperbolic metrics, as well as the normalization of the K\"ahler form $\omega$ (Section \ref{section:conventions}). 

The space of global sections $H^{0}(\omega_{\XX/\BS}^{k+1}(k\sigma_{1}+\ldots+k\sigma_{n}))$ is a projective $\OO_{K}$-module. As a $\Int$-module, it becomes a lattice in the real vector space
\begin{displaymath}
	\begin{split}
	H^{0}(\XX&,\omega_{\XX/\BS}^{k+1}(k\sigma_{1}+\ldots+k\sigma_{n}))\otimes_{\Int}\CC=\\
		&\left(\bigoplus_{\sigma:K\hookrightarrow\CC}
	H^{0}(\XX,\omega_{\XX/\BS}^{k+1}(k\sigma_{1}+\ldots+k\sigma_{n}))\otimes_{\sigma}\CC\right)^{F_{\infty}}.
	\end{split}
\end{displaymath}
This vector space is endowed with the norms
\begin{displaymath}
	\|f\|_{L^{2},\infty}=\sup_{\sigma:K\hookrightarrow\CC}\|f_{\sigma}\|_{L^{2},\sigma}\quad\text{and}\quad\|f\|_{\Pet,\infty}=\sup_{\sigma:K\hookrightarrow\CC}\|f_{\sigma}\|_{\Pet,\sigma}.
\end{displaymath}
\begin{proposition}
Assume $(\omega_{\XX/\BS}(\sigma_{1}+\ldots+\sigma_{n})_{\hyp}^{2})>0$. Then there exists an integer $k_{0}\geq 0$ such that
\begin{displaymath}
	\lbrace f\in H^{0}(\omega_{\XX/\BS}^{k+1}(k\sigma_{1}+\ldots+k\sigma_{n}))\mid \| f\|_{\Pet,\infty}^{2}\leq\frac{\pi}{2^{k}}\rbrace\neq\emptyset\quad\text{for all}\quad k\geq k_{0}.
\end{displaymath}
\end{proposition}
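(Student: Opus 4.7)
The plan is to apply Minkowski's first theorem to the hermitian $\OO_{K}$-module $\overline{E}_{k}=(H^{0}(\XX,\omega_{\XX/\BS}^{k+1}(k\sigma_{1}+\ldots+k\sigma_{n})),(\langle\cdot,\cdot\rangle_{L^{2},\sigma})_{\sigma})$, using the arithmetic Hilbert-Samuel formula of Theorem B as the source of a lower bound on its arithmetic degree. For $k\geq 1$ one has $H^{1}(\XX_{\eta},\omega_{\XX_{\eta}}^{k+1}(k\sigma))=0$, so $\lambda_{k+1;L^{2}}$ coincides with $\det\overline{E}_{k}$ as a metrized line bundle. Setting $V:=(\omega_{\XX/\BS}(\sigma_{1}+\ldots+\sigma_{n})_{\hyp})^{2}$, Theorem B then reads
\begin{displaymath}
\adeg\ac1(\det\overline{E}_{k})=\frac{V}{2}k^{2}+\OO(k\log k),\qquad k\to+\infty,
\end{displaymath}
and the positivity hypothesis $V>0$ makes this quantity grow quadratically in $k$.

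First I would record the generic rank of $\overline{E}_{k}$. By Riemann-Roch on the smooth geometric fibre of $\pi$,
\begin{displaymath}
r_{k}:=\operatorname{rank}_{\OO_{K}}H^{0}(\XX,\omega_{\XX/\BS}^{k+1}(k\sigma_{1}+\ldots+k\sigma_{n}))=(2k+1)(g-1)+kn,
\end{displaymath}
which is linear in $k$. Next I would invoke the standard arithmetic form of Minkowski's first theorem: for every hermitian vector bundle $\overline{F}$ of rank $r$ on $\BS=\Spec\OO_{K}$ there exists a nonzero $f\in F$ satisfying
\begin{displaymath}
\log\|f\|_{L^{2},\infty}\leq-\frac{\adeg\ac1(\det\overline{F})}{r[K:\QQ]}+c_{1}(K)+c_{2}\log r,
\end{displaymath}
where $c_{1}(K)$ depends only on the absolute discriminant of $K$ and $c_{2}$ is absolute. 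Applied to $\overline{F}=\overline{E}_{k}$ and combined with the Hilbert-Samuel expansion above, this yields
\begin{displaymath}
\log\|f\|_{L^{2},\infty}\leq-\frac{Vk}{2(2g-2+n)[K:\QQ]}+\OO(\log k),
\end{displaymath}
which tends to $-\infty$ with $k$; in particular $\|f\|_{L^{2},\infty}^{2}\leq 1$ for $k$ beyond some threshold $k_{0}$.

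To conclude I would translate back to the Petersson norm via the identity $\langle\cdot,\cdot\rangle_{L^{2},\sigma}=(2^{k}/\pi)\langle\cdot,\cdot\rangle_{\Pet,\sigma}$ recalled at the start of the section, which gives $\|f\|_{\Pet,\infty}^{2}=(\pi/2^{k})\|f\|_{L^{2},\infty}^{2}$. The target bound $\|f\|_{\Pet,\infty}^{2}\leq\pi/2^{k}$ is therefore equivalent to $\|f\|_{L^{2},\infty}^{2}\leq 1$, which was established above. The one delicate point in the whole argument is matching the arithmetic degree of the determinant line as normalized by Theorem B and Definition \ref{definition:Quillen} with the covolume of $\overline{E}_{k}$ viewed as a Euclidean lattice in the real vector space $H^{0}(\XX,\omega_{\XX/\BS}^{k+1}(k\sigma_{1}+\ldots+k\sigma_{n}))\otimes_{\Int}\RR$, so that the classical Minkowski inequality can be applied with the sup norm $\|\cdot\|_{L^{2},\infty}$. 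Once this bookkeeping is in place, the quadratic growth of $\adeg$ dominates both the linear growth of $r_{k}$ and all error terms, and the proposition follows at once.
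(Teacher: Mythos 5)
Your proof is correct and follows essentially the same route as the paper: the paper's own argument is precisely "Theorem B plus Minkowski's theorem" to produce a nonzero section with $\|f\|_{L^{2},\infty}\leq 1$ for $k\gg 0$ (citing \cite[Sec. 5.2]{ARR}, \cite[Ch. VIII, Par. 2.3]{Soule}, \cite[Exp. III, Th. 4]{Szpiro} for the lattice-theoretic bookkeeping you flag as the delicate point), followed by the conversion $\|f\|_{\Pet,\infty}^{2}=(\pi/2^{k})\|f\|_{L^{2},\infty}^{2}$. You merely spell out the rank computation and the quantitative form of Minkowski's inequality that the paper leaves to the references.
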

\begin{proof}
By Theorem B and Minkowski's theorem, we infer that there exists $k_{0}\geq 0$ such that
\begin{displaymath}
	\lbrace f\in H^{0}(\omega_{\XX/\BS}^{k+1}(k\sigma_{1}+\ldots+k\sigma_{n}))\mid 0<\| f\|_{L^{2},\infty}\leq 1\rbrace\neq\emptyset\quad\text{for all}\quad k\geq k_{0}.
\end{displaymath}
See \cite[Sec. 5.2]{ARR}, \cite[Ch. VIII, Par. 2.3]{Soule} or \cite[Exp. III, Th. 4]{Szpiro} for the details of the argument. We conclude by the relation $\| f\|_{\Pet,\infty}^{2}=(\pi/2^{k})\|f\|_{L^{2},\infty}^{2}$.
\end{proof}
Fix an integer $n\geq 3$. If $p_{1},\ldots,p_{n}\in\PP_{K}^{1}(K)$ are distinct $K$-valued points, then $(\PP^{1}_{K};p_{1},\ldots,p_{n})$ is a pointed stable curve of genus $0$ over $K$. The moduli space $\SCM_{0,n}$ is a smooth and projective scheme over $\Spec\Int$. Therefore the tuple $(\PP^{1}_{K};p_{1},\ldots,p_{n})$ extends to a pointed stable curve $(\pi:\XX\rightarrow\BS;\wp_{1},\ldots,\wp_{n})$, with a cartesian diagram
\begin{displaymath}
	\xymatrix{
		\XX\ar[r]\ar[d]^{\pi}	&\SCM_{0,n+1}\ar[d]\\
		\BS\ar[r]^{\hspace{-0.3cm}\C(\pi)}\ar@/^/[u]^{\wp_{1},\ldots,\wp_{n}}		&\SCM_{0,n}\ar@/_/[u]_{\sigma_{1},\ldots,\sigma_{n}}.
	}
\end{displaymath}
The real number $(\omega_{\XX/\BS}(\sigma_{1}+\ldots+\sigma_{n})_{\hyp}^{2})=\adeg(\C(\pi)^{\ast}\overline{\kappa}_{0,n})$ is a well-defined invariant of $(\PP^{1}_{K};p_{1},\ldots,p_{n})$. From \cite[Cor. 4.8]{SingARR}, we know that the Liouville metric on $\kappa_{0,n}$ is continuous on $\SCM_{0,n}$.\footnote{In \cite[Ch. 5]{GFM:thesis} we prove that the Liouville metric on $\kappa_{g,n}$ is continuous on $\SCM_{g,n}$ and pre-log-log along $\pd\SM_{g,n}$.} Accordingly, there is a well defined arakelovian height on $1$-cycles\footnote{Recall that a $1$-cycle of $\SCM_{0,n}$ is a formal linear combination, with integer coefficients, of integral closed Zariski subsets of $\SCM_{0,n}$.} on $\SCM_{0,n}$ with respect to $\overline{\kappa}_{0,n}$, $h_{\overline{\kappa}_{0,n}}$. We thus rewrite $\adeg(\C(\pi)^{\ast}\overline{\kappa}_{0,n})=h_{\overline{\kappa}_{0,n}}(\Imag\C(\pi))$, where $\Imag\C(\pi)$ is the image cycle of $\C(\pi)$ in $\SCM_{0,n}$. 
\begin{theorem}
Let $K$ be a fixed number field and $n\geq 3$ an integer. For all but a finite number of $n$-tuples $(p_{1},\ldots,p_{n})$ of distinct $K$-rational points of $\PP^{1}_{K}$, there exists an integer $k_{0}=k_{0}(p_{1},\ldots, p_{n})$ such that
\begin{displaymath}
	\lbrace f\in H^{0}(\XX,\omega_{\XX/\BS}^{k+1}(k\wp_{1}+\ldots+k\wp_{n})\mid 0<\|f\|_{\Pet,\infty}^{2}\leq\frac{\pi}{2^{k}}\rbrace\neq\emptyset
	\quad\text{for all}\quad k\geq k_{0}.
\end{displaymath}
\end{theorem}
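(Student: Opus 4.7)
The plan is to reduce the theorem to the preceding proposition by establishing the positivity
\begin{displaymath}
	(\omega_{\XX/\BS}(\sigma_{1}+\ldots+\sigma_{n})_{\hyp}^{2})>0
\end{displaymath}
outside a finite collection of $n$-tuples. The initial step is to translate this arithmetic self-intersection into an arakelovian height on $\SCM_{0,n}$. Using the cartesian square relating $\C(\pi)$ to the universal sections, one obtains the identification
\begin{displaymath}
	(\omega_{\XX/\BS}(\sigma_{1}+\ldots+\sigma_{n})_{\hyp}^{2})=\adeg\C(\pi)^{\ast}\overline{\kappa}_{0,n},
\end{displaymath}
which, after division by $[K:\QQ]$, is precisely the height $h_{\overline{\kappa}_{0,n}}$ of the $K$-rational point of $\SCM_{0,n}$ represented by $(p_{1},\ldots,p_{n})$.

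Next I would combine two ingredients: the ampleness of the line bundle $\kappa_{0,n}$ on the smooth projective scheme $\SCM_{0,n}\rightarrow\Spec\Int$, established in the appendix, and the continuity of the Liouville metric on the compact analytic space $\SCM_{0,n}^{\an}$ (Corollary 4.8 of \cite{SingARR}). Together these ensure that $\overline{\kappa}_{0,n}$ satisfies a Northcott type finiteness property: for every real number $C$, the set
\begin{displaymath}
	\lbrace x\in\SCM_{0,n}(K)\mid h_{\overline{\kappa}_{0,n}}(x)\leq C\rbrace
\end{displaymath}
is finite. Specializing to $C=0$, only finitely many classes of $n$-tuples --equivalently, finitely many points of $\SCM_{0,n}(K)$-- yield a non-positive self-intersection. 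For all remaining tuples, the preceding proposition produces an integer $k_{0}=k_{0}(p_{1},\ldots,p_{n})$ with the desired property, completing the argument.

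The main technical point I anticipate lies in the Northcott step, since the Liouville metric is only continuous --not smooth-- while the standard theory is typically phrased for smooth hermitian metrics. My approach would be to pick, for some large integer $N$, a very ample model of $\kappa_{0,n}^{\otimes N}$ together with a Fubini-Study metric arising from an auxiliary projective embedding. As any two bounded continuous metrics on the same line bundle yield heights that differ by an $\OO(1)$ term on $\SCM_{0,n}(\overline{K})$, the classical Northcott finiteness for Weil heights transfers to $h_{\overline{\kappa}_{0,n}}$. This comparison argument is standard but requires some care to execute rigorously. A secondary subtlety is that passing from finitely many points of $\SCM_{0,n}(K)$ to finitely many tuples requires interpreting the statement modulo the $\mathrm{PGL}_{2}(K)$-action on $n$-tuples, a point worth making explicit.
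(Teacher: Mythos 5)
Your proposal follows the paper's own proof: the paper likewise reduces the statement to the finiteness of the set of tuples with $h_{\overline{\kappa}_{0,n}}(\Imag\C(\pi))\leq 0$, deduced from the ampleness of $\kappa_{0,n}$ over $\Spec\Int$ (the appendix) together with the continuity of the Liouville metric and the fact that $K$ is fixed, and then concludes by the preceding proposition. Your two elaborations --- carrying out the Northcott step by comparing the continuous Liouville metric with a smooth metric on an ample power of $\kappa_{0,n}$, and the observation that ``all but finitely many tuples'' should really be read modulo the $\mathrm{PGL}_{2}(K)$-action, since the height only depends on the corresponding point of $\SCM_{0,n}(K)$ --- are details the paper leaves implicit rather than a different route.
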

\begin{proof}
Notations being as above, we have to prove that there are finitely many tuples of distinct $K$-rational points $(p_{1},\ldots,p_{n})$ with $h_{\overline{\kappa}_{0,n}}(\Imag\C(\pi))\leq 0$. Because $K$ has been fixed and the Liouville metric is continuous, it suffices to show that $\kappa_{0,n}$ is ample on $\SCM_{0,n}$. This is precisely the content of Theorem \ref{theorem:ampleness} in the appendix. The proof is complete.
\end{proof}
\section{Appendix}
Let $n\geq 3$ be an integer and $\SCM_{0,n}\rightarrow\Spec\Int$ the moduli scheme classifying $n$-pointed stable curves of genus $0$. We identify $\SCM_{0,n+1}\rightarrow\SCM_{0,n}$ with the universal curve and write $\sigma_{1},\ldots,\sigma_{n}$ for the universal sections. In this appendix we establish the ampleness of the tautological line bundle $\kappa_{0,n}=\langle\omega_{\SCM_{0,n+1}/\SCM_{0,n}}(\sigma_{1}+\ldots+\sigma_{n}),\omega_{\SCM_{0,n+1}/\SCM_{0,n}}(\sigma_{1}+\ldots+\sigma_{n})\rangle$.
\begin{lemma}[Keel-Tevelev]\label{theorem:keel-tevelev}
Let $k$ be any field.\\
i. The line bundle $\kappa_{0,n;k}$ on $\SCM_{0,n;k}$ is very ample.\\
ii. For every integer $m\geq 1$, let $W_{m}$ be the standard irreducible representation of the symmetric group $\mathfrak{S}_{m}$, i.e. $m$-tuples of integers that sum to 0. Then
\begin{displaymath}
	H^{0}(\SCM_{0,n;k},\kappa_{0,n;k})\simeq\bigotimes_{4\leq j\leq n}W_{j}.
\end{displaymath}
In particular the dimension $\dim H^{0}(\SCM_{0,n;k},\kappa_{0,n;k})$ does not depend on the field $k$.
\end{lemma}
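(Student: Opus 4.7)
The approach is to identify $\kappa_{0,n}$ with an explicit tautological line bundle on $\SCM_{0,n}$ whose very ampleness and space of global sections have been analyzed by Keel and Tevelev.

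The first step is to compute $\kappa_{0,n}$ in $\Pic(\SCM_{0,n})$ using the Mumford isomorphism of Theorem~\ref{theorem:3.1}. In genus $0$ the Hodge bundle $\lambda_{1;0,n}=\det R\pi_{\ast}\omega_{\pi}$ is canonically trivial, because $\pi_{\ast}\omega_{\pi}=0$ and the trace map gives a canonical isomorphism $R^{1}\pi_{\ast}\omega_{\pi}\cong\OO_{\SCM_{0,n}}$. Substituting $g=0$ and $k=0$ into Theorem~\ref{theorem:3.1} then yields, up to sign, a canonical isomorphism
\begin{displaymath}
	\psi_{0,n}\otimes\delta_{0,n}^{-1}\overset{\sim}{\longrightarrow}\kappa_{0,n}.
\end{displaymath}
Hence $\kappa_{0,n}$ is an explicit combination of $\psi$-classes and the boundary divisor; using the Kodaira--Spencer map and relative Serre duality for the universal family, one further checks that this coincides with the log canonical line bundle $K_{\SCM_{0,n}}\otimes\OO(\pd\SM_{0,n})\cong\lambda_{2;0,n}$, where the last isomorphism uses Lemma~\ref{lemma:3.1} for $k=1$ together with the triviality of $\lambda_{1;0,n}$.

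The second step is to transfer Keel--Tevelev's results to this line bundle. Keel and Tevelev analyze the log canonical embedding of $\SCM_{0,n}$, proving that it is very ample over $\Spec\Int$ (they even give explicit equations), and describing its space of global sections inductively via the forgetful morphisms $\SCM_{0,n}\to\SCM_{0,n-1}$. In this way they produce an $\mathfrak{S}_{n}$-equivariant identification of $H^{0}(\SCM_{0,n;k},\kappa_{0,n;k})$ with $\bigotimes_{4\le j\le n}W_{j}$. Their construction is integral and compatible with arbitrary base change, which yields both parts of the lemma and, in particular, field independence of $\dim H^{0}$. Alternatively, the field independence follows from the flatness of $\SCM_{0,n}\to\Spec\Int$ combined with the vanishing of higher cohomology forced by ampleness (together with cohomology and base change), once very ampleness over $\Spec\Int$ is in hand.

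The main obstacle in reproducing this self-containedly lies in the representation-theoretic bookkeeping at each inductive step: one must carefully track the $\mathfrak{S}_{n}$-action through the tower $\SCM_{0,n}\to\SCM_{0,n-1}\to\ldots\to\SCM_{0,3}=\Spec\Int$ and the boundary stratification, and match the contribution picked up at the $n$-th level with the standard representation $W_{n}$. This is precisely the content of Keel--Tevelev's analysis, which is what my plan would invoke rather than reprove.
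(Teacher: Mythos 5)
Your proposal is, at bottom, the same as the paper's proof: the substantive content --- very ampleness of $\kappa_{0,n;k}$ over an arbitrary field and the identification of its global sections with $\bigotimes_{4\le j\le n}W_{j}$ --- is in both cases delegated to Keel--Tevelev. The paper's proof is a one-line citation of Corollaries 2.6 and 2.7 of that reference, together with the observation that their Section 2 works over an arbitrary field. The extra step you insert --- identifying $\kappa_{0,n}$ with $\psi_{0,n}\otimes\delta_{0,n}^{-1}$ via Theorem \ref{theorem:3.1} at $k=0$ and the canonical triviality of $\lambda_{1;0,n}$, and then with the log canonical bundle --- is correct and harmless, but the paper does not need it; if you do take that route, note that the identification with $K_{\SCM_{0,n}}\otimes\OO(\pd\SM_{0,n})$ rests on the formula $K_{\SCM_{0,n}}=\psi_{0,n}-2\delta_{0,n}$, a genuine computation that you only gesture at and should cite rather than dismiss with \textquotedblleft Kodaira--Spencer and Serre duality\textquotedblright.

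One point deserves correction because it affects the logic of the surrounding appendix. You assert that Keel--Tevelev prove very ampleness \emph{over} $\Spec\Int$. They do not: their Section 2 works over a (fixed, arbitrary) field, which is precisely why the lemma is stated fiberwise and why the paper needs the appendix at all --- Theorem \ref{theorem:ampleness} exists exactly to pass from very ampleness over every residue field, plus the field-independence of $\dim H^{0}$, to ampleness of $\kappa_{0,n}$ over $\Spec\Int$. If very ampleness over $\Spec\Int$ were already available, that theorem would be vacuous. For the same reason your proposed fallback argument for part (ii) is circular in context: it presupposes very ampleness over $\Spec\Int$, and moreover ampleness of $\kappa_{0,n}$ does not by itself force the vanishing of higher cohomology of $\kappa_{0,n}$ (only of its large tensor powers), so constancy of $h^{0}$ across fibers cannot be extracted that way. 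The explicit $\mathfrak{S}$-equivariant description $H^{0}\simeq\bigotimes_{4\le j\le n}W_{j}$, valid over each field separately, is what does the work, and it must be quoted from Keel--Tevelev as the paper does.
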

\begin{proof}
This is Corollary 2.6 and Corollary 2.7 of \cite{Keel-Tevelev} (observe that in Section 2 of loc. cit. $k$ is an arbitrary field).
\end{proof}
\begin{theorem}\label{theorem:ampleness}
The line bundle $\kappa_{0,n}$ on $\SCM_{0,n}$ is ample.
\end{theorem}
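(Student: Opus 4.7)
The plan is to reduce the absolute ampleness of $\kappa_{0,n}$ on $\SCM_{0,n}$ to its fiberwise ampleness over $\Spec\Int$ --which is exactly the content of Lemma \ref{theorem:keel-tevelev}-- and then to invoke the standard EGA machinery that spreads relative ampleness from the fibers of a proper morphism.

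First I would recall that the structural morphism $\pi:\SCM_{0,n}\rightarrow\Spec\Int$ is projective (Knudsen-Mumford), in particular proper, and that $\Spec\Int$ is noetherian and affine. By part \textit{i} of Lemma \ref{theorem:keel-tevelev}, for every $s\in\Spec\Int$ --the generic point of characteristic zero or a closed point of positive characteristic-- the restriction $\kappa_{0,n}|_{\SCM_{0,n;\kappa(s)}}$ is very ample, hence ample. The key step is then an appeal to EGA IV, 9.6.4: for a proper morphism of finite presentation onto a noetherian base, the set of points at which a given invertible sheaf restricts to an ample sheaf on the fiber is open, and over that open set the sheaf is relatively ample. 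In our situation this set is all of $\Spec\Int$, so $\kappa_{0,n}$ is $\pi$-ample.

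To pass from $\pi$-ampleness to absolute ampleness I would invoke that $\Spec\Int$ is affine. For $N\gg 0$ the direct image $\pi_{\ast}\kappa_{0,n}^{\otimes N}$ is a coherent, hence finitely generated, $\Int$-module, and the evaluation morphism realizes $\SCM_{0,n}$ as a closed subscheme of $\PP(\pi_{\ast}\kappa_{0,n}^{\otimes N})\simeq\PP_{\Int}^{M}$ over $\Spec\Int$, with $\kappa_{0,n}^{\otimes N}$ the pullback of $\OO(1)$. Since the restriction of an ample line bundle to a closed subscheme is ample, $\kappa_{0,n}^{\otimes N}$, and consequently $\kappa_{0,n}$, is ample on $\SCM_{0,n}$.

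The hard part is really not hard: all the geometric input --very ampleness on every fiber together with the independence of $h^{0}$ from the characteristic-- is concentrated in the Keel-Tevelev lemma, and the remainder is a purely formal deduction. An alternative route avoiding EGA IV, 9.6.4 would exploit Lemma \ref{theorem:keel-tevelev} \textit{ii} through cohomology and base change: the constancy of $h^{0}$ on fibers forces $\pi_{\ast}\kappa_{0,n}$ to be locally free and compatible with base change, so that the fiberwise complete linear systems come by restriction from the global one, directly yielding a closed immersion into $\PP(\pi_{\ast}\kappa_{0,n})$ over $\Spec\Int$ with $\kappa_{0,n}$ the pullback of $\OO(1)$; this in fact exhibits $\kappa_{0,n}$ as very ample on $\SCM_{0,n}$.
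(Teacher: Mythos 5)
Your proof is correct, but it takes a genuinely different route from the paper's. You delegate the passage from fiberwise to absolute ampleness to general EGA machinery: openness of the locus where an invertible sheaf is ample on the fibers of a proper morphism, together with relative ampleness over that locus (EGA IV 9.6.4, or EGA III 4.7.1), followed by the equivalence of relative and absolute ampleness over an affine base. This is clean, and notably your main argument consumes only part \emph{i} of the Keel--Tevelev lemma (fiberwise very ampleness); the constancy of $h^{0}$ plays no role in it. The paper instead verifies Hartshorne's cohomological criterion of ampleness by hand: it reduces to coherent sheaves flat over $\Spec\Int$, uses Grauert's theorem --- and here part \emph{ii} of the lemma, the independence of $h^{0}$ from the characteristic, is essential --- to produce a closed immersion into a projective bundle over $\Spec\Int[1/l]$ for a suitable $l$, treats the finitely many primes dividing $l$ by the semicontinuity theorem, and glues the resulting vanishing statements over an affine cover of $\Spec\Int$. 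Your concluding alternative route (cohomology and base change applied to $\pi_{\ast}\kappa_{0,n}$ itself, yielding a closed immersion into $\PP(\pi_{\ast}\kappa_{0,n})$ over all of $\Spec\Int$ and hence very ampleness of $\kappa_{0,n}$) is in fact the closest in spirit to the paper's argument but more economical, since Grauert's theorem applies at once over the whole integral base $\Spec\Int$ when $h^{0}$ is constant on all fibers, so the two-stage patching over $\Spec\Int[1/l]$ and the $\Spec\Int[1/l_{p}]$ can be dispensed with. What the paper's longer argument buys is self-containedness relative to textbook references; what yours buys is brevity and a strictly weaker input.
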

\begin{proof}
We check that $\kappa_{0,n}$ satisfies the cohomological criterion of ampleness \cite[Ch. III, Prop. 5.3]{Hartshorne}. Let $\mathcal{F}$ be a coherent sheaf on $\SCM_{0,n}$. We must prove that there exists an integer $N_{0}>0$ such that
\begin{equation}\label{equation:app_1}
	H^{i}(\SCM_{0,n},\mathcal{F}\otimes\kappa_{0,n}^{\otimes N})=0\quad\text{for}\quad N\geq N_{0}\quad\text{and}\quad i>0.
\end{equation}
A first reduction consists in restricting to the class of coherent sheaves which are flat over $\Spec\Int$. Indeed, suppose we have checked the criterion of ampleness for such sheaves. Since $\SCM_{0,n}$ is projective over $\Spec\Int$, for any coherent sheaf $\mathcal{F}$ on $\SCM_{0,n}$ there is an exact sequence of coherent sheaves
\begin{equation}\label{equation:app_2}
	0\longrightarrow \mathcal{K}\longrightarrow\mathcal{E}\longrightarrow\mathcal{F}\longrightarrow 0,
\end{equation}
with $\mathcal{E}$ locally free. In particular, $\mathcal{E}$ is flat over $\SCM_{0,n}$. Since $\SCM_{0,n}$ is flat over $\Spec\Int$, so does $\mathcal{E}$. Because $\Int$ is a domain of principal ideals, flatness of $\mathcal{E}$ over $\Spec\Int$ implies flatness of $\mathcal{K}$ over $\Spec\Int$. Fix an integer $N_{0}$ such that (\ref{equation:app_1}) holds for both $\mathcal{K}$ and $\mathcal{E}$. Then (\ref{equation:app_1}) also holds for $\mathcal{F}$, by the long exact sequence of cohomology of (\ref{equation:app_2}). Henceforth $\mathcal{F}$ denotes a coherent sheaf flat over $\Spec\Int$.

By Lemma \ref{theorem:keel-tevelev} the line bundle $\kappa_{0,n;\QQ}$ is very ample on $\SCM_{0,n;\QQ}$. If $\pi:\SCM_{0,n}\rightarrow\Spec\Int$ stands for the structure map, then  the natural morphism
\begin{equation}\label{equation:app_3}
	\pi^{*}\pi_{*}\kappa_{0,n}\longrightarrow\kappa_{0,n}
\end{equation}
becomes surjective when restricted to $\SCM_{0,n;\QQ}$, by global generation of $\kappa_{0,n;\QQ}$. Therefore, for a sufficiently divisible integer $l$, (\ref{equation:app_3}) becomes surjective when restricted to $\SCM_{0,n;\Int[1/l]}$. This yields a commutative diagram of proper morphisms
\begin{equation}\label{equation:app_4}
	\xymatrix{
		\SCM_{0,n;\Int[1/l]}\ar[r]^{\hspace{-0.1cm}\varphi}\ar[rd]
		&\PP(\pi_{*}\kappa_{0,n;\Int[1/l]})\ar[d]\\
		&\Spec\Int[1/l].
	}
\end{equation}
From Lemma \ref{theorem:keel-tevelev} the dimension of $H^{0}(\SCM_{0,n;\FF_{p}},\kappa_{0,n;\FF_{p}})$ is independent of $p$. By Grauert's theorem \cite[Ch. III, Cor. 12.9]{Hartshorne}, the sheaf $\pi_{*}\kappa_{0,n;\Int[1/l]}$ is locally free on $\Spec\Int[1/l]$. In addition, for every $s\in\Spec\Int[1/l]$ we have an isomorphism
\begin{equation}\label{equation:app_5}
	\pi_{*}\kappa_{0,n;\Int[1/l]}\otimes k(s)\overset{\sim}{\longrightarrow} H^{0}(\SCM_{0,n;k(s)},\kappa_{0,n;k(s)}).
\end{equation}
If $p$ is a prime number with $p\nmid l$, the isomorphism (\ref{equation:app_5}) amounts to
\begin{displaymath}
	\pi_{*}\kappa_{0,n;\Int[1/l]}\otimes_{\Int}\FF_{p}\overset{\sim}{\longrightarrow}H^{0}(\SCM_{0,n;\FF_{p}},\kappa_{0,n;\FF_{p}}).
\end{displaymath}
Therefore, reducing (\ref{equation:app_4}) modulo $p$, with $p\nmid l$, yields
\begin{displaymath}
	\SCM_{0,n;\FF_{p}}\overset{\varphi_{p}}{\longrightarrow}\PP(H^{0}(\SCM_{0,n;\FF_{p}},\kappa_{0,n;\FF_{p}})).
\end{displaymath}
The morphism $\varphi_{p}$ is a closed immersion, since $\kappa_{0,n;\FF_{p}}$ is very ample. This being true for all $p\nmid l$, the morphism $\varphi$ of (\ref{equation:app_4}) is also a closed immersion. We infer that $\kappa_{0,n;\Int[1/l]}$ is very ample. Hence, there exists an integer $N_{0}>0$ such that
\begin{equation}\label{equation:app_6}
	H^{i}(\SCM_{0,n;\Int[1/l]},\mathcal{F}\otimes\kappa^{\otimes N}_{0,n;\Int[1/l]})=0\quad\text{for}\quad N\geq N_{0}\quad\text{and}\quad i>0.
\end{equation}
Since higher direct images commute with flat base change \cite[Ch. III, Prop. 9.3]{Hartshorne}, (\ref{equation:app_6}) equivalently reads
\begin{equation}\label{equation:app_7}
	R^{i}\pi_{*}(\mathcal{F}\otimes\kappa_{0,n}^{\otimes N})(\Spec\Int[1/l])=0\quad\text{for}\quad N\geq N_{0}\quad\text{and}\quad i>0.
\end{equation}
We still need to deal with primes $p\mid l$. Let $p$ be such a prime. Since $\kappa_{0,n;\FF_{p}}$ is very ample, there exists $N_{p}>0$ for which the ampleness criterion holds for $\kappa_{0,n;\FF_p}$ and $\mathcal{F}_{\FF_p}$. Since there are only finitely many primes dividing $l$, after possibly increasing $N_{0}$ we can suppose that $N_{0}\geq N_{p}$ for all $p\mid l$. Let us fix an integer $N\geq N_{0}$. For every $p\mid l$ and $i=1,\ldots, n-2$, introduce
\begin{displaymath}
	\Omega_{p,i}=\lbrace y\in\Spec\Int\mid H^{i}(\SCM_{0,n;k(y)},\mathcal{F}\otimes\kappa^{\otimes N}_{0,n;k(y)})=0\rbrace.
\end{displaymath}
Then $\Omega_{p,i}$ is Zariski open by the semi-continuity theorem \cite[Ch. III, Th. 12.8]{Hartshorne}, and contains $p$ because $N\geq N_0\geq N_{p}$. We write $\Omega_p$ for the intersection of all the $\Omega_{p,i}$ for $i=1,\ldots,n-2$. Define
\begin{displaymath}
 	\Omega_{p}=\bigcap_{i=0}^{n-2}\Omega_{p,i},
\end{displaymath}
which is a Zariski open neighborhood of $p$. Let $l_{p}\geq 1$ be an integer such that $p\in\Spec\Int[1/l_p]\subseteq\Omega_{p}$. The construction of $\Omega_{p}$ and the semi-continuity theorem ensure
\begin{equation}\label{equation:app_8}
	R^{i}\pi_{*}(\mathcal{F}\otimes\kappa_{0,n}^{\otimes N})(\Spec\Int[1/l_p])=0\quad\text{for}\quad i=1,\ldots,n-2.
\end{equation}
Finally, observe that $\mathcal{U}:=\lbrace\Spec\Int[1/l]\rbrace\cup\lbrace\Spec\Int[1/l_p]\rbrace_{p\mid l}$ is an affine open covering of $\Spec\Int$. From equations (\ref{equation:app_7})--(\ref{equation:app_8}), it comes
\begin{displaymath}
	H^{i}(\SCM_{0,n},\mathcal{F}\otimes\kappa_{0,n}^{\otimes N})=R^{i}\pi_{*}(\mathcal{F}\otimes\kappa_{0,n}^{\otimes N})(\Spec\Int)=0
	\quad\text{for}\quad i=1,\ldots,n-1.
\end{displaymath}
This even holds for $i>n-2$, because $\SCM_{0,n}$ has Krull dimension $n-2$. The proof is complete.
\end{proof}
\bibliographystyle{amsplain}

\textsc{G. Freixas i Montplet, D\'epartement de Math\'ematiques, Universit\'e Paris-Sud 11, B\^atiment 425, 91405 Orsay cedex, France}\\
\\
\textit{E-mail address}: \texttt{gerard.freixas@math.u-psud.fr}
\end{document}